\theoremstyle{plain}
\newtheorem{Thm}{Theorem}
\newtheorem{ImpCor}[Thm]{Corollary}
\newtheorem{Prop}{Proposition}[section]
\newtheorem{Lem}[Prop]{Lemma}
\newtheorem{Cor}[Prop]{Corollary}
\theoremstyle{definition}
\newtheorem{Def}[Prop]{Definition}
\newtheorem{Rem}[Prop]{Remark}
\declaretheoremstyle[
notefont=\bfseries, notebraces={}{},
bodyfont=\normalfont\itshape,
headformat=\NAME\NUMBER\NOTE
]{nopar}
\declaretheoremstyle[
notefont=\bfseries, notebraces={}{},
bodyfont=\normalfont,
headformat=\NAME\NUMBER\NOTE
]{nopardef}
\theoremstyle{nopar}
\newtheorem*{Thm*}{Theorem}
\newtheorem*{Prop*}{Proposition}
\newtheorem*{Lem*}{Lemma}
\newtheorem*{Cor*}{Corollary}
\newtheorem*{ImpCor*}{Corollary}
\theoremstyle{nopardef}
\newtheorem*{Def*}{Definition}
\newcommand{\del}{\partial}
\newcommand{\RR}{\mathbb{R}}
\newcommand{\PP}{\mathcal{P}}
\newcommand{\NN}{\mathbb{N}}
\newcommand{\UU}{\mathcal{U}}
\newcommand{\al}{\alpha}
\newcommand{\ga}{\gamma}
\newcommand{\de}{\delta}
\newcommand{\ep}{\epsilon}
\newcommand{\ka}{\kappa}
\newcommand{\la}{\lambda}
\newcommand{\w}{\omega}
\newcommand{\mtxb}{\begin{pmatrix}}
\newcommand{\mtxe}{\end{pmatrix}}
\newcommand{\skewT}{\tilde{T}_{g}}
\newcommand{\XbP}{X\setminus R_P}
\newcommand{\m}{\mathfrak{m}}
\newcommand{\diam}{\operatorname{diam}}
\newcommand{\Lip}{\operatorname{Lip}}
\newcommand{\Height}{\operatorname{Height}}
\newcommand{\Holder}{H\"older\xspace}
\title[Invariant graphs of skew products]{Invariant graphs of a family of non-uniformly expanding skew products over Markov maps}
\author{C. P. Walkden}
\author{T. Withers}\thanks{T. Withers was partially supported by an EPSRC DTA}
\address{School of Mathematics\\ The University of Manchester\\ Oxford Road\\ Manchester\\ M14 9PL\\ UK.}
\email{tom.withers@manchester.ac.uk, charles.walkden@manchester.ac.uk.}
\begin{document}

\keywords{Invariant graph, skew product, bony graph}
\subjclass[2010]{37C70, 37D25, 37C45}

\begin{abstract}
We consider a family of skew-products of the form $(Tx, g_x(t)) : X \times \RR \to X \times \RR$ where $T$ is a continuous expanding Markov map and $g_x : \RR \to \RR$ is a family of homeomorphisms of $\RR$.  A function $u: X \to \RR$ is said to be an \emph{invariant graph} if $\mathrm{graph}(u) = \{(x,u(x)) \mid x\in X\}$ is an invariant set for the skew-product; equivalently if $u(T(x)) = g_x(u(x))$.  A well-studied problem is to consider the existence, regularity and dimension-theoretic properties of such functions, usually under strong contraction or expansion conditions (in terms of Lyapunov exponents or partial hyperbolicity) in the fibre direction.  Here we consider such problems in a setting where the Lyapunov exponent in the fibre direction is zero on a set of periodic orbits.  We prove that $u$ either has the structure of a `quasi-graph' (or `bony graph') or is as smooth as the dynamics, and we give a criteria for this to happen.
\end{abstract}

\maketitle

\section{Introduction and results}
\subsection{Introduction}

Let $T$ be a continuous, expanding Markov map of the circle,
$X$. Consider the skew product dynamical system $\skewT:X\times \RR\to
X\times\RR$ defined by
\begin{equation}\label{skew product}
\skewT(x,t) = (Tx, g(x,t)).
\end{equation}
where $g(x,t):X\times\RR\to \RR$.  A function $u : X \to \RR$ is said
to be an \emph{invariant graph} if $\mathrm{graph}(u) = \{ (x,u(x))\}$
is $\skewT$-invariant; equivalently
\begin{equation}\label{invariant graph eq}
u(Tx) = g(x,u(x)).
\end{equation}
We refer to $X$ as the \emph{base} and $\RR$ as the \emph{fibre}.  We
are interested in the case when $g(x,\cdot) : \RR \to \RR$ is a
homeomorphism; we normally write $g_x(t)=g(x,t)$, $g_x : \RR \to \RR$
and refer to $g_x(\cdot)$ as a \emph{skewing function}.  When $g_x$ is
uniformly expanding, the invariant graph exists, is \Holder
continuous and, under a partial hyperbolicity assumption, generically has no higher regularity.  As a particular
example, let $b\geq2, b \in \NN$ and let $Tx = bx \bmod 1$.  Let
$\lambda \in (0,1)$, $\lambda b > 1$, and let $g_x(t) = \cos(2\pi x) + \lambda^{-1} t$.
In this case the invariant graph $u(x) = -\sum_{n=0}^{\infty}\lambda^n \cos 2\pi
b^nx$, the classical Weierstrass function.

More generally, if the Lyapunov exponent in the fibre direction is
positive with respect to a given reference measure, then the invariant
graph is measurable, (\ref{invariant graph eq}) holds almost everywhere, and generically is not continuous \cite{Stark2,
  HNW}.  Note that \cite{HNW} requires a partial hyperbolicity
assumption on the skew-product.

In this note, we alter the non-uniform contraction condition
in the fibre and assume that the Lyapunov exponent in the fibre
direction is zero for certain measures.  In particular, we consider
the case when the skewing function is the identity map on a given set
of periodic orbits.  We construct a family of measurable invariant
sets for the skew product and identify the set of measure zero on which (\ref{invariant graph eq}) fails.  Our invariant sets are generically almost
everywhere graphs of functions that are discontinuous on every open
set and are almost everywhere uniformly bounded.

We describe the precise structure of the invariant sets.  The
following dichotomy holds: either the invariant set is of a
discontinuous nature of the form described above or is as smooth as
the dynamics.  The former case is generic; in this case (together with
a partial hyperbolicity assumption) we also calculate the box
dimension of the invariant set in terms of thermodynamic formalism.

The invariant sets we obtain are an example of a family of so-called
bony attractors---a \emph{bony attractor} is a closed set that intersects every almost every
fibre at a single point and any other fibre at an interval. These sets
were first described by \cite{Kudryashov} and other examples occur in
\cite{KleptsynVolk, GharaeiHomburg} as attractors of step
functions over shift maps.

\begin{figure*}[ht]
\centering
\begin{subfigure}[t]{0.5\textwidth}
\centering
\includegraphics[scale = 0.3]{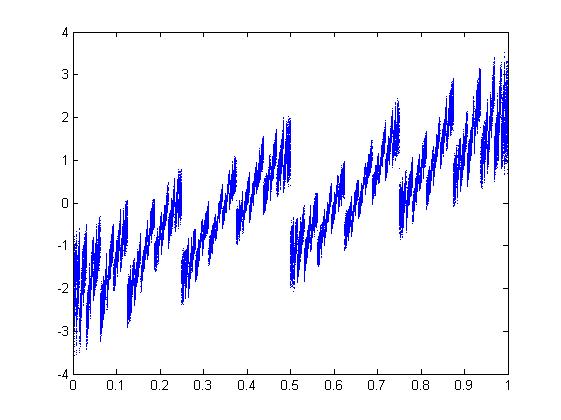}
\centering
\caption{\tiny{An invariant graph defined on $X\setminus R_P$.}}
\label{sin ex qg}
\end{subfigure}\hfill
\begin{subfigure}[t]{0.5\textwidth}
\centering
\includegraphics[scale=0.3]{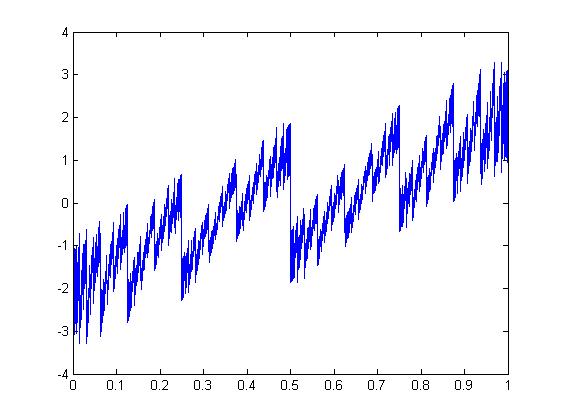}
\centering
\caption{\tiny{The quasi-graph invariant set.}}
\label{sin ex g}
\end{subfigure}
\centering
\caption{The invariant graph and an invariant set (quasi-graph) of the affine real-valued skew product $\skewT(x,t) = (Tx, \sin(2\pi x) + \ga^{-1}(x)t)$, where $\ga(x) =  \frac{3+\cos(2\pi x)}{4}$ and where $T$ is the doubling map. The skew product is the identity at the fixed point $x=0$ and the invariant graph is discontinuous at all pre-images of $0$.}
\label{sin ex}
\end{figure*}

\subsection{Results}
Let $\skewT$ be a skew product as defined in (\ref{skew product}).  We
write $g(x,t)=g_x(t)$ and assume that, for each $x \in X$, $g_x : \RR
\to \RR$ is a homeomorphism.  We also assume that, for each $t\in
\RR$, $x\mapsto g_x(t)$ is $\al$-H\"{o}lder continuous.  We define
$g^n_x(t) = g_{T^{n-1}x}g_{T^{n-2}x}\cdots g_x(t)$ so that
$\skewT^n(x,t) = (T^nx, g^n_x(t))$.  We define $h_x(t) = g_x(t)^{-1}$
so that $g^n_x(t)^{-1} = h^n_x(t) = h_x\cdots h_{T^{n-1}x}(t)$.  Let
$P := \bigcup_{r=0}^{\rho-1} P_r \subset X$ be a collection of $\rho$
distinct periodic orbits for $T:X\to X$.  For $p\in P$ we denote by
$\ell(p)$ the least period of $p$. Let $R_P$ be the set of pre-images
of points in $P$; that is
\begin{equation*}
R_P = \{x\in X\mid T^nx\in P \text{ for some } n\geq 0\}.
\end{equation*}
Note that this is a countable set.  We will often consider the set
$R_P$ and its complement $\XbP$ separately; both sets are
$T$-invariant.

If $h_x : \RR \to \RR$ is a diffeomorphism, we define the
\emph{derivative in the fibre direction} to be
\[
\del h_x(t) = \lim_{\ep\to 0}\frac{h_x(t+\ep) - h_x(t)}{\ep}.
\]
In \S\ref{define skew prod} we will precisely define a set
of skew products $\skewT\in\mathcal{S}(X,\RR, C^\al_P)$ where $\skewT$
is expanding in the fibre direction except along the periodic orbits
in $P$ where $g^{\ell(p)}_p(t)=t$ for all $t\in\RR$.  We shall abuse notation
slightly and write $g\in C^\al_P$.

\begin{Rem}\label{diffble remark}
Examples of skew products that satisfy our hypotheses include affine maps $g_x(t) = f(x)+\ga(x)^{-1}t$ where $f$, $\ga$ are $\al$-H\"{o}lder with $\ga(p)=1$, $f(p)=0$ for $p\in P$ and otherwise $0<\ga(x)<1$. Our conditions remain satisfied for diffeomorphisms $g:X\to \text{Diff}(\RR,\RR)$ defined by small, sufficiently smooth perturbations of affine maps preserving conditions \eqref{non expanding} to \eqref{g holder} below. Figure \ref{sin ex} shows an explicit example.
\end{Rem}

First, we show that such invariant graphs exist. We prove that we have a unique invariant
function $u$ on $\XbP$; hence the graph of this function is an
invariant set of the restricted skew product
$\skewT|_{\XbP}$.

\begin{Thm}\label{existence and uniqueness of la1}
Let $\skewT\in \mathcal{S}(X,\RR, C^\al_P)$. There exists a uniformly bounded function $u:X\setminus R_P\to \RR$ such that $u(Tx)=g_x(u(x))$ for $x\in X\setminus R_P$.  Any other uniformly bounded function $v:X\setminus R_P\to \RR$ satisfying this equation is equal to $u$; moreover, if $\mu$ is an ergodic measure for $T$ not supported on $R_P$, then $u$ is $\mu$-a.e.\ unique amongst the set of measurable functions.
\end{Thm}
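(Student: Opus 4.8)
The plan is to invert the fibre dynamics. Writing $h_x = g_x^{-1}$, equation \eqref{invariant graph eq} is equivalent to $u(x) = h_x(u(Tx))$, and by iteration to $u(x) = h^n_x(u(T^nx))$ for every $n\ge 0$; as the cocycle is (non-strictly) contracting in the fibre, the natural candidate is
\[ u(x) := \lim_{n\to\infty} h^n_x(0), \qquad x\in\XbP. \]
I would organise the argument around three ingredients: (a) a uniform bound $C := \sup\{|h^n_x(0)| : x\in\XbP,\ n\ge 0\} < \infty$; (b) the fibre–contraction estimate that $\diam(h^n_x(J))\to 0$ as $n\to\infty$ for every $x\in\XbP$ and every bounded interval $J\subset\RR$; and (c) the (soft) deductions of existence and of both uniqueness statements from (a) and (b).

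\emph{Ingredient (b).} Each $h_x$ is non-expanding (one of the hypotheses defining $\mathcal{S}(X,\RR, C^\al_P)$), so $n\mapsto\diam(h^n_x(J))$ is non-increasing; off a small fixed neighbourhood $U$ of $P$ the map $g_x$ is uniformly expanding, so there is $\ep_0>0$ with $\diam(h_x(K))\le(1-\ep_0)\diam(K)$ for all $x\notin U$ and all bounded intervals $K$; and since $P$ is a finite union of repelling periodic orbits of the positively expansive map $T$, for $U$ small enough the only points whose forward orbit stays eventually inside $U$ lie in $P\subset R_P$, so the forward orbit of any $x\in\XbP$ leaves $U$ infinitely often. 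Consequently $\diam(h^n_x(J))\le(1-\ep_0)^{A_n(x)}\diam(J)$, where $A_n(x)=\#\{0\le j<n:T^jx\notin U\}\to\infty$, which gives (b).

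\emph{Ingredient (c).} Granting (a): from $h^{n+m}_x(0)=h^n_x(h^m_{T^nx}(0))$ and $h^m_{T^nx}(0),0\in[-C,C]$ we get $|h^{n+m}_x(0)-h^n_x(0)|\le\diam(h^n_x([-C,C]))\to 0$, so $(h^n_x(0))_n$ is Cauchy; its limit $u$ satisfies $|u|\le C$, and passing to the limit in $h^{n+1}_x(0)=h_x(h^n_{Tx}(0))$, using continuity of $h_x$, shows $u(x)=h_x(u(Tx))$, i.e.\ \eqref{invariant graph eq} on $\XbP$. If $v:\XbP\to\RR$ is any other uniformly bounded solution and $J$ contains the ranges of $u$ and $v$, then $|u(x)-v(x)|=|h^n_x(u(T^nx))-h^n_x(v(T^nx))|\le\diam(h^n_x(J))\to 0$, so $v=u$. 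For the measurable uniqueness, let $\mu$ be $T$-ergodic and not supported on $R_P$; as $R_P$ is countable and $T$-invariant, $\mu(R_P)=0$. If $v$ is measurable with $v(Tx)=g_x(v(x))$ $\mu$-a.e., then $v(x)=h^n_x(v(T^nx))$ $\mu$-a.e.\ for each $n$; since $v$ is finite a.e.\ there is a set $A$ with $\mu(A)>0$ on which $|v|\le M$, and by ergodicity (Poincar\'e recurrence) $\mu$-a.e.\ $x\in\XbP$ has $T^{n_k}x\in A$ for some $n_k\to\infty$, whence $|u(x)-v(x)|\le\diam(h^{n_k}_x([-\max(M,C),\max(M,C)]))\to 0$ by (b).

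\emph{Ingredient (a): the main obstacle.} Near $P$ the fibre cocycle has no contraction, so a priori the per-step displacements $h_y(0)$ could accumulate over arbitrarily long sojourns of the orbit near $P$; ruling this out is the crux, and it uses the remaining hypotheses on $C^\al_P$ --- that the fibre cocycle over each periodic orbit is the identity, and that $x\mapsto g_x(t)$ is \Holder uniformly on bounded fibre sets. I would decompose the word $h^n_x=h_x h_{Tx}\cdots h_{T^{n-1}x}$, applied to $0$, according to whether each base point lies in $U$ or not. On a maximal sojourn in $U$ the orbit shadows a single periodic orbit $\{z_0,\dots,z_{\ell-1}\}$ and escapes it at the uniform exponential rate supplied by the expansion of $T$; over each full length-$\ell$ block of such a sojourn the cocycle equals $\Id$ plus an error bounded by a constant times $\sum_k d(T^kx,P)^\al$, and this sum over the whole sojourn is $\lesssim\sum_{j\ge 0}\rho^{-\al j}$, bounded uniformly; the at most $\ell-1$ leftover steps move the fibre coordinate by at most a fixed constant. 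Hence a whole sojourn perturbs $|\cdot|$ by a uniformly bounded amount $\beta$, whereas each step outside $U$ contracts it by $1-\ep_0$ (up to an additive constant); since consecutive sojourns are separated by at least one step outside $U$, summing the geometric series $\sum_k\beta(1-\ep_0)^k$ gives $|h^n_x(0)|\le C$ with $C$ independent of $x\in\XbP$ and of $n$. In the affine case $g_x(t)=f(x)+\ga(x)^{-1}t$ this is exactly the bound $|h^n_x(0)|\le\sum_{k\ge 0}\big(\prod_{i=0}^{k}\ga(T^ix)\big)|f(T^kx)|<\infty$, which is uniform because $f(p)=0$ and $\ga(p)=1$ make the near-$P$ terms \Holder-small while the away terms decay geometrically; the general case should be a controlled perturbation of this computation.
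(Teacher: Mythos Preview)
Your proposal is correct and follows essentially the same route as the paper: define $u(x)=\lim_n h^n_x(t)$, decompose the orbit into sojourns near $P$ (where shadowing of the periodic orbit plus the identity hypothesis \eqref{indentity under orbit} give a uniform per-sojourn displacement bound, the paper's Lemma~\ref{proving f bounded on bad orbits}) and visits to $G=X\setminus B_\de(P)$ (where \eqref{fast contracting} supplies geometric contraction), and then read off the Cauchy property, boundedness, and both uniqueness statements exactly as you do. One small caveat: the class $\mathcal{S}(X,\RR,C^\al_P)$ does \emph{not} require each $h_x$ to be non-expanding---condition \eqref{non expanding} only bounds the Lipschitz constant of the full composition $h^n_x$ by $C_{\mathcal{S}}\ge 1$ (see Remark~\ref{examples and constant}), so ``$n\mapsto\diam(h^n_x(J))$ non-increasing'' and ``each step outside $U$ contracts by $1-\ep_0$'' are not literally available; however your argument goes through unchanged once you replace these by the actual hypotheses \eqref{non expanding} and \eqref{fast contracting}, which give $\diam(h^n_x(J))\le C_{\mathcal{S}}\la_\de^{A_n(x)}\diam(J)$ and hence both (b) and the geometric summation in (a).
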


We prove the following corollary to Theorem \ref{existence and uniqueness of la1}, showing that $u$ is uniquely defined on $\XbP$ but can be arbitrarily defined on $R_P$.
\begin{ImpCor}\label{existence of the us}
Let $\skewT\in \mathcal{S}(X,\RR, C^\al_P)$ and let $u$ be as in
Theorem~\ref{existence and uniqueness of la1}. Suppose that $P$
consists of $\rho$ periodic orbits. For each $r$ such that $0\leq r
\leq \rho-1$, choose one element of each periodic orbit $p\in
P_r\subset P$.  There is a $\rho$-parameter family of
invariant graphs for the skew product, $u_s:X\to\RR$, where
$u_s(p)=s_r$ for $s=(s_0,\dots, s_{\rho-1})\in \RR^\rho$. Furthermore,
$u_s|_{\XbP} = u$.
\end{ImpCor}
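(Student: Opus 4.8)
The plan is to extend $u$ from $\XbP$ to all of $X$ by working backwards along the countable tree of pre-images of $P$, using the fact that $g_x$ is a homeomorphism to propagate values uniquely once the values on $P$ are fixed. First I would observe that $R_P$ decomposes as the disjoint union over $r$ of the set of pre-images of the periodic orbit $P_r$, and each of these is $T$-invariant; so it suffices to define $u_s$ on the pre-images of a single periodic orbit $p\in P_r$ and check the invariance equation there. Fix $p\in P_r$ of period $\ell=\ell(p)$, and set $u_s(T^i p) = s_r$ for $0\le i<\ell$. This is forced to be consistent because $g^{\ell}_p$ is the identity on $\RR$ (this is part of the definition of $\mathcal S(X,\RR,C^\al_P)$), so $u_s(T^\ell p)=g^\ell_p(s_r)=s_r=u_s(p)$, and more generally $u_s(T^{i+1}p)=g_{T^i p}(u_s(T^i p))$ will follow from choosing the intermediate values along the periodic orbit compatibly; I would make this precise by defining $u_s(T^i p) := g^i_p(s_r)$ for $0\le i<\ell$, which is automatically periodic of period $\ell$.

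Next, for a general point $x\in R_P$ with $x\notin P$, let $m\ge 1$ be minimal with $T^m x\in P$ (such $m$ exists by definition of $R_P$, and $T^j x\notin P$ for $0\le j<m$). Define $u_s(x)$ by descending: having already defined $u_s(T^m x)$ as above, set recursively $u_s(T^{j}x) := h_{T^j x}\big(u_s(T^{j+1}x)\big)$ for $j=m-1, m-2,\dots,0$, where $h_y=g_y^{-1}$ is the fibre inverse. Since each $g_y$ is a bijection of $\RR$, this is well-defined and is the unique choice making $u_s(T^{j+1}x)=g_{T^j x}(u_s(T^j x))$ hold. I would then verify that this prescription is internally consistent: if $x$ has two descriptions (as a pre-image of $p$ at level $m$, say), the recursion gives the same answer because it only ever inverts $g$ along the forward orbit, which is deterministic. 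The invariance equation $u_s(Tx)=g_x(u_s(x))$ then holds for every $x\in R_P$ by construction, and for every $x\in\XbP$ by Theorem \ref{existence and uniqueness of la1}, since $\XbP$ is $T$-invariant and $u_s|_{\XbP}=u$. Finally the map $s\mapsto u_s$ is visibly a $\rho$-parameter family: the value of $u_s$ on the pre-images of $P_r$ depends only on $s_r$, and different $s_r$ give different values at $p\in P_r$, so the family is genuinely $\rho$-dimensional.

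The only real point needing care — the "main obstacle," though a mild one — is the consistency of the backward recursion: one must check that the value $u_s(x)$ does not depend on which forward iterate of $x$ lands in $P$ first, nor on any auxiliary choices, and that the periodic values $u_s(T^i p)=g^i_p(s_r)$ are well-defined precisely because $g^{\ell(p)}_p=\mathrm{Id}_\RR$. This last identity is exactly the hypothesis built into $\mathcal S(X,\RR,C^\al_P)$ (to be set up in \S\ref{define skew prod}), so once it is invoked the argument closes. I would also remark that no boundedness or regularity is claimed on $R_P$ — indeed $u_s$ is typically wildly discontinuous there — so there is nothing to prove beyond the pointwise functional equation and the parameter count.
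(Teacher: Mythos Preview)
Your construction is essentially the paper's: fix a value on one representative of each periodic orbit, propagate forward along the orbit via $g$ (well-defined because $g^{\ell(p)}_p=\mathrm{Id}$), then pull back to all of $R_P$ via $h=g^{-1}$, and set $u_s=u$ on $\XbP$. The one substantive difference is that the paper does \emph{not} stop at the functional equation: it goes on to prove that $u_s$ is bounded on $R_P$, with a bound depending only on $s$ and the skew product. This is done by writing $u_s(x)=h^N_x(s_r)$ for $T^N x=p$, decomposing the orbit segment into excursions inside $B_\delta(P)$ punctuated by visits to $G=X\setminus B_\delta(P)$, and invoking Lemma~\ref{proving f bounded on bad orbits} exactly as in the existence proof. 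You explicitly set this aside (``no boundedness or regularity is claimed on $R_P$\ldots so there is nothing to prove''), and strictly speaking the corollary as stated does not assert boundedness; but the paper treats it as part of the content, and it is used later (for instance in Lemma~\ref{itrt forw t bdd}, where $\|u_s\|_\infty$ appears). So your argument is correct for the statement, but omits a step the paper regards as belonging here.
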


Clearly, for $x \in R_P$, each fibre $\{x\}\times\RR$ is $\skewT$-invariant; by taking the union we have a dense $\skewT$-invariant set. As $\XbP$ is dense in $X$, we can consider a sequence $x_0\to x$ for any $x\in R_P$ where each point $x_0\in \XbP$.

\begin{Def}\label{quasi def}
Let $\skewT\in \mathcal{S}(X,\RR, C^\al_P)$. Define the \emph{invariant quasi-graph} $U\subset X\times \RR$ as $U=\bigcup_{x\in X} U_x$ where
\[
U_x =
\begin{cases} (x,u(x)) &\text{ if } x \in \XbP,\\
			\{x\}\times [\liminf_{y \in \XbP, y\to x}u(y),\limsup_{y\in\XbP, y\to x}u(y)] &\text{ if } x\in R_P.
\end{cases}
\]
\end{Def}
In other words, we connect the discontinuities between the values of the function $u$ as we approach $R_P$; see Figure~\ref{sin ex}. Denote the length of the interval of the quasi-graph at $x\in R_P$ by
\[
|U_x| = \left|\limsup_{y \in \XbP, y\to x}u(y)-\liminf_{y \in\XbP, y\to x}u(y)\right|;
\]
if $x\in \XbP$ then we set $|U_x|=0$. In Proposition \ref{quasi-graph inv} we show that $U$ is a $\skewT$-invariant set.

We prove two main results about quasi-graphs, reminiscent of those found in the studies of Weierstrass functions, \cite{Baranski, HuLau} and for dynamically-defined invariant graphs \cite{Stark2, HNW}. The first is a dichotomy of the structure of the invariant graphs.

\begin{Thm}\label{dichotomy thm}
Let $\skewT\in\mathcal{S}(X,\RR, C^\al_P)$. Then either:
\begin{enumerate}
\item the invariant quasi-graph $U$ is the graph of a uniformly $\al$-H\"{o}lder continuous function;
\item for every $x\in R_P$, $|U_x|>0$.
\end{enumerate}
\end{Thm}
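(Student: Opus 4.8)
The plan is to establish the dichotomy by showing that the two alternatives are governed by a single quantity, namely whether the family of ``oscillation functions'' $x \mapsto |U_x|$ is identically zero or not, and to leverage the rigidity coming from the functional equation $u(Tx) = g_x(u(x))$ together with the expansion in the fibre. First I would derive a transformation rule for $|U_x|$: since $T$ is a local homeomorphism, for $x \in R_P$ and $y$ in $\XbP$ near $x$ we have $u(Ty) = g_y(u(y))$, and continuity of $g$ in both variables (the \Holder hypotheses) lets us push the $\liminf$/$\limsup$ through, giving an inclusion $g_x(U_x) \subseteq U_{Tx}$, hence $|U_{Tx}| \geq c\,|U_x|$ for a constant reflecting the fibre expansion of $g_x$ on the relevant interval. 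Iterating along the orbit of a periodic point $p \in P_r$ of period $\ell = \ell(p)$, and using that $g_p^{\ell}$ is the identity there (so the fibre derivatives multiply to $1$ around the cycle), I expect to conclude $|U_p| = |U_{Tp}| = \cdots$, i.e.\ the oscillation is constant along each periodic orbit; more importantly, because every $x \in R_P$ is eventually mapped into some $P_r$ and the map $g_x$ on each finite step is a homeomorphism carrying a nondegenerate interval to a nondegenerate interval, $|U_x| > 0$ for one $x \in R_P$ forces $|U_p| > 0$ for the periodic point it lands on, and conversely $|U_p| > 0$ propagates back to all preimages via the backward branches $h_x$. This is the mechanism making the alternative ``all or nothing''.

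Next I would handle the case $|U_x| = 0$ for all $x \in R_P$, and show we land in alternative (1). If every oscillation vanishes then $u$ extends continuously to all of $X$: the limit $\lim_{y \to x} u(y)$ exists at each $x \in R_P$ because $\liminf = \limsup$, and on $\XbP$ continuity must be checked directly. For this I would run the standard fixed-point/telescoping estimate used to prove Theorem~\ref{existence and uniqueness of la1}: writing $u(x)$ via the backward orbit and the contractions $h^n_x$, the fibre-expansion of $g$ away from $P$ gives exponential contraction of $h^n_x$ on compact sets, and the $\al$-\Holder dependence of $g_x$ on $x$ together with the expansion constant of $T$ yields the estimate $|u(x) - u(x')| \leq C\,d(x,x')^{\al}$ for $x, x'$ in a common branch domain, uniformly — this is exactly where the bounded-ness of $u$ from Theorem~\ref{existence and uniqueness of la1} feeds in, to control the compact fibre intervals on which we need the contraction. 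Gluing the local \Holder estimates over the finite Markov partition gives a uniformly $\al$-\Holder function on $\XbP$, which then extends to the claimed uniformly $\al$-\Holder function on all of $X$ whose graph is exactly $U$ (the extension automatically satisfies $u(Tx) = g_x(u(x))$ on $R_P$ by continuity, and $|U_x| = 0$ means $U$ is genuinely a graph there).

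The remaining case is $|U_x| > 0$ for some $x \in R_P$, and here the propagation argument from the first paragraph shows $|U_p| > 0$ for the periodic orbit underneath $x$; then pulling back through all finite backward branches (each a homeomorphism of $\RR$ in the fibre, hence sending the nondegenerate interval $U_p$ to a nondegenerate interval contained in some $U_y$) shows $|U_y| > 0$ for every $y \in R_P$ in the relevant preimage tree. The one subtlety is that $R_P$ is the union of the full preimage trees of all $\rho$ periodic orbits, so I must check that either every tree has positive oscillation or, if some tree has zero oscillation at its root $p$, then that does not contradict case (2); but in fact if $|U_p| = 0$ for one periodic orbit while $|U_{p'}| > 0$ for another, the continuity argument of paragraph two still applies near the first tree while failing near the second — and I claim this cannot occur because the function $u$ is a single object: a discontinuity of $u$ at points accumulating on $p'$ forces, via the dense set of preimages of $p'$ and the topological mixing typical of expanding Markov maps (or at least the density of $\XbP$ used throughout), oscillation to appear everywhere $u$ could have been continuous. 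I expect the main obstacle to be precisely this last point — ruling out a ``mixed'' scenario where $u$ is \Holder near some periodic trees and wildly discontinuous near others — and the clean way to dispatch it is to show the set $\{x \in X : u \text{ is continuous at } x\}$ is both open (clear) and, if nonempty and $T$-invariant-saturated, forces $|U_p| = 0$ for every $p$ via backward-branch contraction, so the dichotomy is really the statement that this set is either all of $X$ or misses every point of $R_P$.
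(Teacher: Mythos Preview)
Your proposal has a real gap in the second paragraph, and the ``mixed scenario'' you flag as the main obstacle is actually an artifact of a suboptimal case split rather than a genuine difficulty.

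The gap is in your argument for \Holder continuity. You propose to write $u(x)=\lim_n h^n_x(t)$ and telescope $|h^n_x(t)-h^n_{x'}(t)|$, relying on contraction of $h^j_x$ away from $P$. But $h^j_x = h_x h_{Tx}\cdots h_{T^{j-1}x}$ is computed along the \emph{forward} orbit of $x$, and along a typical forward orbit the contraction is not uniformly exponential in $j$: the orbit makes arbitrarily long excursions near $P$ where $\del h\approx 1$, so condition~\eqref{fast contracting} only gives $|h^j_x(t)-h^j_x(t')|\le C_{\mathcal S}\la_\de^{k(j)}|t-t'|$ with $k(j)$ the number of visits to $G$, and $k(j)/j$ can be arbitrarily small. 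Meanwhile $d(T^j x,T^j x')$ grows under $T$. The resulting telescoping sum does not converge to anything comparable to $d(x,x')^\al$; this non-uniformity is exactly why Theorem~\ref{existence and uniqueness of la1} yields only boundedness of $u$, not continuity. Assuming $|U_x|=0$ on all of $R_P$ does not improve the rate of contraction along forward orbits, so there is no extra leverage to extract from your hypothesis by this route.

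The paper's argument uses a different and essential idea: iterate \emph{backward in the base} towards a single fixed point $p$ at which $u$ is continuous. Take the inverse branch $\w$ with $\w(p)=p$; then $\w^n x\to p$ for every $x$, and one writes $u(x)=g^n_{\w^n x}(u(\w^n x))$. The point is that along this specific backward orbit, $d(\w^i x,p)\lesssim\theta^i$, so condition~\eqref{slow contracting} together with the \Holder regularity of $\log\la$ and $\la(0)=1$ gives $\sum_i|\log\la(\theta^i\ep)|<\infty$; hence $g^n_{\w^n x}$ has a \emph{uniformly bounded} (not small, just bounded) Lipschitz constant in the fibre. Telescoping $|u(x)-u(y)|$ over the base variable then produces a convergent geometric series $\sum_i C\theta^{\al i}d(x,y)^\al$, plus an endpoint term bounded by $C|u(\w^n x)-u(\w^n y)|$, which tends to zero as $n\to\infty$ precisely because $\w^n x,\w^n y\to p$ and $u$ is continuous at $p$. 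This gives the uniform $\al$-\Holder estimate on $\XbP$, hence on $X$.

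This also dissolves your mixed scenario. The correct case split is: if $|U_x|=0$ for \emph{some} $x\in R_P$, then by Proposition~\ref{lengths of Ux decrease} the fixed point $p$ beneath it has $|U_p|=0$, and the argument above---which uses only continuity at this single $p$---already forces alternative~(1) globally. The other periodic orbits in $P$ play no role, so there is nothing to rule out about coexistence of continuous and discontinuous trees.
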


When we are in the second case, we will often say ``the quasi-graph
$U$ is not the graph of an invariant function". As $R_P$ is dense in
$X$, we see that $U$ is not the graph of a function on any open set
and has a `vertical jump' at each element of $R_P$.  It is easy to
construct examples of continuous invariant graphs, but our second
result proves that generically $U$ is of the discontinuous type.

\begin{Thm}\label{open dense f}
Let $\skewT\in\mathcal{S}(X,\RR, C^\al_P)$. There exists a $C^{\al}$-open and $C^0$-dense set of $g\in C^\al_P$ such that the invariant quasi-graph $U$ of $\skewT$ is not the graph of a function.
\end{Thm}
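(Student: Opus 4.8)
The plan is to exhibit, near any given $g \in C^\al_P$, a perturbation whose quasi-graph has a genuine vertical jump at some point of $R_P$, and then to show that the jump condition is robust under small $C^\al$ perturbations. By Theorem~\ref{dichotomy thm}, once $|U_p| > 0$ for a single periodic point $p \in P$, the quasi-graph fails to be a graph; so it suffices to produce one such $p$. The openness and density are then handled separately: density in the $C^0$ topology, and openness in the $C^\al$ topology.

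For \textbf{density}, I would fix $g \in C^\al_P$ and a periodic point $p \in P_r$ of period $\ell = \ell(p)$. The value $u(p\text{-side})$ is controlled, via Theorem~\ref{existence and uniqueness of la1} and the contraction of $h^n$ on $\XbP$, by the backward orbit sums of the skewing data along preimages of $p$. Writing $g_x(t) = f(x) + \ga(x)^{-1} t$ in the affine model of Remark~\ref{diffble remark} (the general case being a perturbation of this), one has, for $y \to p$ through $\XbP$, an expansion of $u(y)$ as an absolutely convergent series in the backward iterates; the limsup and liminf as $y \to p$ along two different "branches" of preimages of $p$ differ precisely when a certain Birkhoff-type sum of $f$ twisted by the products of $\ga^{-1}$ is non-constant across branches. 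One then perturbs $f$ (keeping $f(p)=0$, $\ga(p)=1$, so staying in $C^\al_P$) by an arbitrarily $C^0$-small bump supported away from $P$ on one branch of $T^{-1}p$ but not another; this changes the two one-sided limits by different amounts and forces $|U_p| > 0$. Since the perturbation can be taken arbitrarily small in $C^0$, this gives $C^0$-density of the set where $U$ is not a graph.

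For \textbf{openness}, I would show that the quantity $|U_p|$ — or rather a lower bound for it — depends continuously on $g$ in the $C^\al$ topology. The one-sided limits $\limsup_{y\to p} u(y)$ and $\liminf_{y\to p} u(y)$ are realized as limits of the contraction-fixed-point data along backward orbits; the uniform contraction estimates underlying Theorem~\ref{existence and uniqueness of la1} (the fibrewise maps $h^n_x$ contract uniformly on $\XbP$, with constants depending continuously on the $C^\al$ data) give that $u(y)$, and hence both one-sided limits at $p$, vary continuously — indeed uniformly on compact pieces of the backward orbit — under $C^\al$-small perturbations of $g$. Consequently, if $|U_p| > \delta$ for some $g_0$, the same holds with $|U_p| > \delta/2$ for all $g$ in a $C^\al$-neighbourhood, and by Theorem~\ref{dichotomy thm} each such $U$ fails to be a graph. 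Taking the union over all periodic points and over a countable dense set of "centres" for the density argument yields a $C^\al$-open, $C^0$-dense set.

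The \textbf{main obstacle} is making the one-sided limit computation at $p$ explicit and uniform enough: one needs to (i) identify a clean formula or estimate for $\limsup$ and $\liminf$ of $u(y)$ as $y \to p$ in terms of the skewing data along the two relevant families of backward orbits, (ii) verify that a small $C^0$ bump in $f$ genuinely separates these two limits rather than shifting both equally (this uses that $T$ has at least two preimage branches at $p$, or an adjacent preimage, so one can perturb asymmetrically), and (iii) control the tail of the backward-orbit series uniformly so that both the density perturbation and the openness stability estimate go through with the same constants. Steps (i) and (iii) are where the non-uniform contraction — the fact that $\ga(p) = 1$ exactly — must be handled carefully, since the contraction rate degenerates as $y \to p$; one expects to exploit that the degeneration is at a definite polynomial (or summable) rate along the backward orbit, which is exactly what the definition of $\mathcal{S}(X,\RR,C^\al_P)$ is set up to guarantee.
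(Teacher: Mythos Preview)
Your overall architecture---split into $C^0$-density and $C^\al$-openness, each reduced via Theorem~\ref{dichotomy thm} to producing or preserving a single jump $|U_p|>0$---matches the paper exactly, and your openness argument is essentially the paper's Lemma~\ref{open perturb f}: one shows a uniform bound $|u(x)-\hat u(x)|<\ep_0$ on $\XbP$ for $C^\al$-small perturbations (using the return-time decomposition $H^j_x$ and Lemma~\ref{aux lem bad set} to control the degenerating contraction), whence any existing jump persists.

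The density argument, however, diverges from the paper and contains a gap. You propose to compute the one-sided limits of $u$ at $p$ explicitly and separate them by a bump ``on one branch of $T^{-1}p$''. But the one-sided limits $\limsup_{y\to p}u(y)$, $\liminf_{y\to p}u(y)$ are governed by the \emph{forward} orbits of points $y\in\XbP$ near $p$; these orbits linger near $P$ and then spread over all of $X$, so a localized bump is not obviously asymmetric with respect to the two sides of $p$, and there is no clean series for these limits to perturb term-by-term. The paper (Lemma~\ref{dense perturb f}) sidesteps this entirely with a contradiction argument anchored at an auxiliary periodic point $q\notin R_P$: one takes two disjoint backward sequences $\w^nq\to p$ and $\nu^nq\to p$ (using that $T$ may be assumed full-branched with $\geq 2$ branches), perturbs $g$ in a neighbourhood of a single point $\w^jq$ disjoint from the forward orbit of $q$, from $\{\nu^nq\}$, and from $P$. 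Then $u(q)=\hat u(q)$ (forward orbit untouched) and $u(\nu^nq)=\hat u(\nu^nq)$, forcing $\hat u(p)=u(p)$ if both are continuous; but along the $\w$-branch the perturbation, pushed forward by the expansion bound $|g^{n-j}_{T^jq}(s)-g^{n-j}_{T^jq}(s')|\geq C_{\mathcal S}^{-1}|s-s'|$ and the limit $g^n_{\w^nq}(t)\to D_q(t)$ of Remark~\ref{gn converges}, forces $\hat u(p)\neq u(p)$. This avoids any direct computation of the one-sided limits. Your obstacle (i)--(iii) is real, and the paper's anchor-point trick is precisely the missing idea that dissolves it.
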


For higher regularity of $u$, we need higher regularity on the base dynamics $T$ and we
consider $C^{r+\al}$-expanding
endomorphisms of the circle, with $r\in\NN$, $r\geq 1$ and $\al>
0$. We also assume that that $g_x : \RR \to \RR$ is $C^{r+\al}$ and,
for each $t$, $x \mapsto g_x(t)$ is $C^{r+\al}$.  Again, we abuse
notation slightly and write $g\in C^{r+\al}_P$. Denote this set of
skew products by $\skewT \in
\mathcal{S}_{C^{r+\al}}(X,\RR,C^{r+\al}_P)$.  In this case we can
strengthen the above dichotomy: if $U$ is the graph of a function then
it is as smooth as the dynamics.
\begin{Thm}\label{dichotomy thm strong}
Let $\al\geq 0$, $r\in\NN$ and $r\geq 1$. Let $\skewT\in \mathcal{S}_{C^{r+\al}}(X,\RR, C^{r+\al}_P)$. Then either:
\begin{enumerate}
\item the invariant quasi-graph $U$ is the graph of a $C^{r+\al}$ function;
\item for every $x\in R_P$, $|U_x|>0$.
\end{enumerate}
\end{Thm}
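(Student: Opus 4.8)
The plan is to bootstrap the regularity in case~(1) of Theorem~\ref{dichotomy thm}. First apply that theorem: if $|U_x|>0$ for every $x\in R_P$ we are already in case~(2) and there is nothing to prove, so assume instead that $U=\operatorname{graph}(u)$ for the uniformly $\al$-\Holder continuous extension $u\colon X\to\RR$ of the function of Theorem~\ref{existence and uniqueness of la1}. Letting $\XbP\ni x_0\to x$ in $u(Tx_0)=g_{x_0}(u(x_0))$ and using the $\al$-\Holder continuity of $x\mapsto g_x(t)$ shows that $u(Tx)=g_x(u(x))$ holds for \emph{every} $x\in X$. It therefore suffices to prove that \emph{any} continuous $u\colon X\to\RR$ solving this equation is automatically $C^{r+\al}$: the uniqueness clause of Theorem~\ref{existence and uniqueness of la1} then identifies the resulting smooth function with the one used in Definition~\ref{quasi def}, so that $U$ is the graph of a $C^{r+\al}$ function and we are in case~(1).

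To prove this regularity claim I would induct on $m$, $0\le m\le r$, the assertion being that $u\in C^{m}$ (and, at the last step, $C^{r+\al}$). Given $u\in C^{m-1}$, differentiate the functional equation $m$ times: by the chain rule the highest-order term arising from $u(Tx)$ is $u^{(m)}(Tx)\,T'(x)^{m}$ and from $g_x(u(x))$ is $\del g_x(u(x))\,u^{(m)}(x)$, so any $m$-th derivative of $u$ must satisfy the linear cocycle (cohomological) equation
\[
\phi(Tx)=\frac{\del g_x(u(x))}{T'(x)^{m}}\,\phi(x)+b_m(x),
\]
where $b_m$ is a universal polynomial in the jets of $T$ and $g$ and in $u,u',\dots,u^{(m-1)}$, hence continuous (indeed \Holder) by the inductive hypothesis and the smoothness of the data. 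The obstruction to solving such an equation by a fixed-point argument---and the reason the conclusion is a dichotomy---is the neutrality of the fibre dynamics along $P$: there $\del g_p\equiv1$, the graph transform acts isometrically, and no \Holder or $C^k$ norm is contracted. What case~(1) provides is exactly that this obstruction is switched off, and this is seen near $P$.

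Fix $p\in P$ of period $\ell=\ell(p)$ and put $\gamma_p=(T^{\ell})'(p)>1$. After a $C^{r+\al}$ change of coordinates linearising $T^{\ell}$ near $p$, and using $g^{\ell}_p=\Id$, the functional equation for $T^{\ell}$ reads, near $0$,
\[
v(\gamma_p x)-v(x)=\beta(c)\,x+R(x),\qquad v:=u-c,\ c:=u(p),
\]
where $\beta$ is the first-order Taylor coefficient in $x$ of $g^{\ell}_x$ at $x=0$ and $R(x)$ is a remainder whose size is controlled by the already-known modulus of $v$. A continuous $w$ with $w(\gamma_p x)=w(x)$ near $0$ and $w(0)=0$ must vanish identically (iterate the relation towards $0$); since moreover $\gamma_p^{j}\neq1$ for every $j\ge1$, one may solve for $v$ order by order, improving the \emph{a priori} bound $v(x)=O(|x|^{\al})$ to
\[
v(x)=\sum_{j=1}^{k}c_j x^{j}+O(|x|^{k+\al}),\qquad k=1,\dots,r,
\]
the coefficient $c_k$ being forced by $(\gamma_p^{k}-1)c_k=[\text{data from }c_1,\dots,c_{k-1}\text{ and the $r$-jets of }g^{\ell},T^{\ell}\text{ at }p]$. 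Thus $u$ has an $r$-th order Taylor expansion with $\al$-\Holder remainder at every point of $P$, and applying the $C^{r+\al}$ families $g^{n}_{x}$ transports this to every $x\in R_P$.

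It remains to promote pointwise regularity on $R_P$ to $u\in C^{r+\al}(X)$, and I expect this globalisation to be the main obstacle. Away from $P$ the fibre dynamics is uniformly expanding and the derivative cocycle $\prod_{j=0}^{n-1}\del g_{T^{j}x}(u(T^{j}x))/(T^{n})'(x)$ is well behaved, so the graph-transform argument of Theorem~\ref{existence and uniqueness of la1} can be run in $C^{r+\al}$ there and delivers continuous candidates $u^{(1)},\dots,u^{(r)}$; the delicate region is a neighbourhood of $P$, where one must show---by making the telescoping estimates of the previous paragraph uniform over $p\in P$ and applying them along orbits that shadow $P$ for long but finite stretches---that the same estimates persist. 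Matching the two descriptions, one then checks that the $u^{(j)}$ are genuine derivatives: here a Liv\v{s}ic-type rigidity is used, namely that continuous solutions of the displayed linear equation are determined by their values on $P$, valid because the associated multiplicative cocycle has product $\gamma_p^{-m}\neq1$ on each orbit of $P$ (and, by the standing fibre-expansion hypotheses, $\neq1$ on every periodic orbit; for $m\ge2$ this is automatic since $(T')^{m}$ dominates). A partition-of-unity argument then assembles the pieces into a global $C^{r+\al}$ solution, which by Theorem~\ref{existence and uniqueness of la1} equals $u$.
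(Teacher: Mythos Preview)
Your approach is fundamentally different from the paper's and, as you yourself anticipate, leaves the hardest step undone. The paper does not bootstrap regularity through cohomological equations at all; it invokes the $C^{r+\al}$ stable manifold theorem once, at the fixed point $(p,u(p))$ of an inverse branch $\tilde\w(x,s)=(\w x,h_{\w x}(s))$. The derivative $D\tilde\w(p,u(p))$ has eigenvalues $\w'(p)<1$ and $1$, so for any $\theta<\chi<1$ the local strong stable manifold $\tilde W^{s,\chi}_\ep$ is a $C^{r+\al}$ graph over a neighbourhood of $p$. Iterating forward by $\skewT$ extends this to a global $C^{r+\al}$ graph $\tilde W$ over $X$. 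The only further dynamical input is the Lipschitz regularity of $u$ already supplied by Theorem~\ref{dichotomy thm}: if $(x,u(x))\notin\tilde W$ then along a subsequence $\chi^{-n_m}d(\tilde\w^{n_m}(x,u(x)),(p,u(p)))\to\infty$, and since $d(\w^{n_m}x,p)\le\theta^{n_m}<\chi^{n_m}$ this forces $d((\w^{n_m}x,u(\w^{n_m}x)),(p,u(p)))/d(\w^{n_m}x,p)\to\infty$, contradicting Lipschitz continuity. Hence $U=\tilde W$ and the proof is complete.

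Your programme has two concrete gaps. First, the globalisation from pointwise Taylor expansions on $R_P$ to $u\in C^{r+\al}(X)$ is only sketched; the graph-transform you propose to run ``away from $P$'' produces candidate functions solving the derivative equations there, but gluing these to the local expansions near $P$ and proving the result is an honest derivative of $u$ is precisely the content of the theorem, and you have not supplied it. Second, your Liv\v{s}ic-type uniqueness claim for $m=1$ is not justified by the hypotheses: on a periodic orbit $q\notin R_P$ of period $k$ the relevant multiplicative cocycle has product $\prod_{j=0}^{k-1}\del g_{T^jq}(u(T^jq))\big/(T^k)'(q)$, and nothing in the definition of $\mathcal{S}_{C^{r+\al}}(X,\RR,C^{r+\al}_P)$ forces this to differ from $1$---that would be exactly a partial hyperbolicity assumption, which Theorem~\ref{dichotomy thm strong} does \emph{not} impose. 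The paper's stable-manifold argument sidesteps both issues: the $C^{r+\al}$ object is produced in one stroke, and the identification with $U$ uses only the Lipschitz bound, with no periodic-data obstructions to verify.
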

Our final result concerns the box dimension of the invariant
quasi-graphs. Our result extends that in  \cite{Bedford} to our
setting, and we believe it is the first attempt to calculate the
dimension of invariant graphs of non-uniformly expanding skew
products. The key difficulty is establishing a sufficiently strong
form of bounded distortion.

Let $q:X\to \RR$ be a function and let $\m(q)=\inf_{x\in X}q(x)$. Suppose $\del g_x(t)>0$; we will show in Remark \ref{g orient pres} that this causes no loss in generality. A skew product is \emph{partially hyperbolic} if there exists $\ka>1$ such that
\begin{equation}\label{partial hyp}
1<\ka\leq \m(\del h)\m(|T'|)
\end{equation}
where the infimum $\m(\del h)$ is taken over all $(x,t)\in X\times\RR$
and $h_x=g_x^{-1}$.

Suppose that $g:X\to C^{2}(\RR,\RR)$ is a $C^2$ diffeomorphism and
we return to allowing $T:X\to X$ to be a continuous expanding Markov map (with conditions specified in \S\ref{define skew prod}). Let $x\mapsto h_xt$, $x\mapsto \del h_x(t)$ and $x\mapsto \del^2 h_x(t)$ be Lipschitz continuous. Denote this set of skew products by $\mathcal{S}(X,\RR, C^{\Lip,2}_P)\subset\mathcal{S}(X,\RR,C^\al_P)$.

Define the function
\[
\mathcal{D} h(x) = \begin{cases} \del h_x(u(Tx)) &\text{ for } x\in
  \XbP,\\ \del h_x(t) &\text{ for } x\in R_P, \text{ where } t=\limsup_{y \in \XbP, y\to Tx}u(y).\end{cases}
\]
Let $\PP(\phi)$ be the topological pressure of a function $\phi:X\to \RR$, see Definition \ref{pressure}. We prove the following.
\begin{Thm}\label{box dim thm}
Let $\tilde T_{g}\in \mathcal{S}(X,\RR, C^{\Lip,2}_P)$ be partially hyperbolic such that the invariant quasi-graph is not the graph of a continuous function on $X$.  The box dimension of the quasi-graph $U$ is the unique solution $t$ to the generalised Bowen equation
\begin{equation}\label{bowen eq}
\PP((1-t)\log |T'| +\log |\mathcal{D} h|)=0.
\end{equation}
\end{Thm}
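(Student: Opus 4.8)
The plan is to estimate the box-counting function $N(\delta)$ of $U$ directly, using the Markov cylinders of the base as the natural collection of columns over which to count boxes. Fix a small $\delta>0$ and let $\mathcal{W}_\delta$ be the finite family of cylinders $I$ (preimages of Markov elements under iterates of $T$) obtained by the usual stopping rule: include $I$ as soon as $\diam I\le\delta$. Bounded distortion of the expanding Markov map $T$ gives, uniformly over $I\in\mathcal{W}_\delta$ and any $x_I\in I$, both $\diam I\asymp\delta$ and $\diam I\asymp\exp\bigl(-S_{n(I)}\log|T'|(x_I)\bigr)$, where $n(I)$ is the depth of $I$ and $S_n$ denotes the $n$-th Birkhoff sum along $T$. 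Since $U\cap(I\times\RR)$ is contained in the rectangle $I\times[\inf_I u,\sup_I u]$ — the vertical segments $\{x\}\times U_x$ inserted at $x\in R_P\cap I$ lie between the $\liminf$ and $\limsup$ of nearby values of $u$ and so do not enlarge this rectangle — the number of $\delta$-boxes needed to cover $U$ above $I$ is comparable to $1+\operatorname{osc}_I(u)/\delta$, where $\operatorname{osc}_I(u)=\sup_I u-\inf_I u$ (the lower estimate relies on the vertical structure of $U$ over $I$, established in the next paragraph). Hence
\[
N(\delta)\asymp\sum_{I\in\mathcal{W}_\delta}\Bigl(1+\tfrac{1}{\delta}\operatorname{osc}_I(u)\Bigr).
\]

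The heart of the argument is the claim that, for every stopped cylinder $I$ of depth $n=n(I)$, one has $\operatorname{osc}_I(u)\asymp\exp\bigl(S_n\log|\mathcal{D}h|(x_I)\bigr)$ with constants independent of $I$. For the upper bound, iterate \eqref{invariant graph eq} to get $u(x)=h^n_x\bigl(u(T^nx)\bigr)$ for $x\in I$, and apply the chain rule in the fibre: evaluating along the graph, $\del h^n_x\bigl(u(T^nx)\bigr)=\prod_{k=0}^{n-1}\mathcal{D}h(T^kx)=\exp\bigl(S_n\log|\mathcal{D}h|(x)\bigr)$. Thus $\operatorname{osc}_I(u)$ is governed by this product, provided one controls how $\del h^n_x(t)$ varies as $x$ runs over $I$ and as $t$ runs over the range of $u$ (a bounded set, by Theorem~\ref{existence and uniqueness of la1}); this is precisely a bounded distortion statement for the iterated fibre maps. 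For the lower bound we use the standing hypothesis that $U$ is not a continuous graph: Theorem~\ref{dichotomy thm} gives $|U_p|>0$ for every $p\in R_P$, and $|U_p|$ takes one of finitely many positive values as $p$ ranges over the finite set $P$. Using the transitivity of $T$ on $X$ (every cylinder covers a neighbourhood of $P$ within a bounded number of iterates), one finds inside $I$ a point $x\in R_P$ whose orbit hits $P$ at some time $m=n+O(1)$; by $\skewT$-invariance of the quasi-graph (Proposition~\ref{quasi-graph inv}) the interval $U_x$ is the $h^m_x$-image of $U_{T^mx}$, and bounded distortion again gives $|U_x|\asymp\exp\bigl(S_n\log|\mathcal{D}h|(x_I)\bigr)\cdot|U_{T^mx}|\gtrsim\exp\bigl(S_n\log|\mathcal{D}h|(x_I)\bigr)$. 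The genuine vertical segment $\{x\}\times U_x\subset U$ then forces at least $|U_x|/\delta$ boxes above $I$. Finally, partial hyperbolicity \eqref{partial hyp} gives $\exp\bigl(S_n\log|\mathcal{D}h|(x_I)\bigr)\gtrsim\ka^n\,\diam I\gtrsim\diam I$, so $\operatorname{osc}_I(u)/\delta\gtrsim\ka^n\to\infty$ and the ``$1+$'' terms are negligible; combining, $N(\delta)\asymp\delta^{-1}\sum_{I\in\mathcal{W}_\delta}\exp\bigl(S_{n(I)}\log|\mathcal{D}h|(x_I)\bigr)$.

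To read off the exponent, put $\varphi_t=(1-t)\log|T'|+\log|\mathcal{D}h|$. Using $\delta^{-1}\asymp\exp\bigl(S_{n(I)}\log|T'|(x_I)\bigr)$ for each $I\in\mathcal{W}_\delta$, the previous display rearranges to
\[
N(\delta)\,\delta^{\,t}\;\asymp\;\sum_{I\in\mathcal{W}_\delta}\exp\bigl(S_{n(I)}\varphi_t(x_I)\bigr).
\]
By the Ruelle--Perron--Frobenius/Gibbs machinery applied to $\varphi_t$ — which the bounded distortion estimates above show is a potential with summable variations on the Markov base — one has $\sum_{|I|=n}\exp\bigl(S_n\varphi_t(x_I)\bigr)\asymp e^{\,n\PP(\varphi_t)}$, and the standard transfer of this bound to a stopping-time family shows that $\sum_{I\in\mathcal{W}_\delta}\exp\bigl(S_{n(I)}\varphi_t(x_I)\bigr)$ stays bounded above and below, uniformly in $\delta$, exactly when $\PP(\varphi_t)=0$. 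For that value of $t$ we obtain $N(\delta)\asymp\delta^{-t}$, so $\dim_B U=t$ (in particular the box dimension exists as a limit). Uniqueness and existence of such $t$ are immediate: $t\mapsto\PP(\varphi_t)$ is continuous, runs from $+\infty$ to $-\infty$, and is strictly decreasing because $\log|T'|>0$ uniformly (so $\tfrac{d}{dt}\PP(\varphi_t)=-\int\log|T'|\,d\mu_t<0$, with $\mu_t$ the equilibrium state); thus \eqref{bowen eq} has a unique root, and this root is $\dim_B U$.

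The main obstacle is the bounded distortion estimate underpinning the oscillation bound — and the regularity of $\log|\mathcal{D}h|$ needed for the thermodynamic step is part of the same package. Away from $P$ the fibre maps $h_x$ are uniform contractions and the usual telescoping argument applies, but along orbits that shadow the periodic set $P$, where $g^{\ell(p)}_p=\Id$ forces $\del h=1$ and there is no fibre contraction at all, the naive bound on $\sum_k\bigl|\log\del h_{T^kx}(t_k)-\log\del h_{T^kx}(s_k)\bigr|$ (and the analogous bound for the dependence of $h^n_x$ on the base point) need not be summable. Overcoming this requires combining the $C^2$/Lipschitz control on $t\mapsto\del h_x(t)$, $\del^2 h_x(t)$ and their Lipschitz dependence on $x$ — which bounds the distortion produced per step in terms of the fibre displacement — with the partial hyperbolicity inequality \eqref{partial hyp}, so that the base expansion compensates for the absence of fibre contraction in the combined rate and the distortion accumulated over any stopped cylinder is bounded no matter how long its orbit lingers near $P$. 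This is the ``sufficiently strong form of bounded distortion'' referred to above, and it is the one substantially new ingredient required to extend Bedford's dimension formula \cite{Bedford} to the non-uniformly expanding setting.
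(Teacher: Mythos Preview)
Your overall strategy matches the paper's: Moran-type stopping cylinders, height $\asymp$ fibre-derivative product via bounded distortion, then read off the exponent from the pressure equation. Where your sketch goes astray is the mechanism for the \emph{vertical} bounded distortion (fixing $x\in C_n$ and varying $t\in U_{C_n}$), which is the genuinely new ingredient here. You attribute it to partial hyperbolicity ``compensating for the absence of fibre contraction in the combined rate'', with the per-step distortion controlled by the fibre displacement; but that is not what makes Lemma~\ref{bounded distortion vertical} work, and a bound in terms of fibre displacement alone would not be summable along an orbit that lingers near $P$.

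The actual trick is that $h_p$ is the identity, so $\del(\log\del h_p)\equiv 0$. Combined with the assumed Lipschitz dependence of $x\mapsto\del^2 h_x(t)$, this bounds the $j$-th term of the distortion sum by $C\,d(T^jx,p)\,\|\del h^{n-j-1}_{T^{j+1}x}\|_\infty$ --- the base distance to $P$, not a fibre quantity, is what appears. One then splits the orbit into excursions near $P$: within an excursion of length $m$, Lemma~\ref{aux lem bad set} gives $d(T^jx,p)\le\de\theta^{m-j}$, so $\sum_j d(T^jx,p)$ over that excursion is uniformly bounded; between excursions the contraction bound~\eqref{fast contr diff} supplies a factor $\la_\de$ per return to $G=X\setminus B_\de(P)$, and these factors sum geometrically. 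Partial hyperbolicity does enter, but elsewhere: it keeps the forward fibre iterates $g^j_x(t)$ in a fixed compact window (Lemma~\ref{itrt forw t bdd}) and drives the horizontal distortion bound (Lemma~\ref{bounded distortion fix t}); it is not invoked in the vertical step.
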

As $\del h_x(t)$, and so $\mathcal{D}h(x)$, are uniformly bounded for $(x,t)\in X\times\RR$, it is well-known that a unique solution to such an equation for a partially-hyperbolic skew-products exists and often corresponds to the dimension of graphs \cite{Bedford, MossWalkdenDim}.

\subsection{The family of skew products}\label{define skew prod}
Here we list the technical hypotheses on the dynamics.  We assume that $T$ is a $C^1$ expanding Markov map.  Specifically, there is a
partition $X = \bigcup_{j=0}^{b-1} X_j$, $X_j = [t_j, t_{j+1}]$ with $0 = t_0 < t_1 < \cdots < t_b=1$, such that, for each $j$, $T|_{X_j}:\mathop{\mathrm{Int}}X_j\to
\bigcup_{i\in I_j}X_i$ is a $C^1$ diffeomorphism.  We assume that $T$ is continuous.  We assume there exists $\theta<1$ such that $|T'(x)|\geq\theta^{-1}>1$ for all $x\in
\bigcup_{i=0}^{b-1}\text{Int}(X_j)$.  We assume that $T$ is locally eventually onto, namely that there exists $N\in\NN$ such that, for all
$X_j$, $T^NX_j=X$ (equivalently, $T^N$ is full branched for some $N\in\NN$).  As $T|_{X_j}$ is a diffeomorphism onto its image, there
exists a well-defined inverse branch $\w_j:\bigcup_{i\in I_j}X_i\to X_j$ and $\w_j$ is a diffeomorphism. Note that
$d(\w_jx,\w_jy)\leq \theta d(x,y)$. Define a cylinder of rank $n$ by $C_n = C_{j_0j_1\dots j_{n-1}} = \w_{j_0}\w_{j_1}\dots\w_{j_{n-1}}(X)$. Note that $C_n \subset X$ is an interval.

We now state the hypotheses on the skewing function.  Let $P =
\bigcup_{r=0}^{\rho-1}P_r$ be a finite set of periodic orbits.  If $p \in
P_r$ then we write $\ell(p)$ for the least period of $p$.  Recall
$h_x(t)=g_x(t)^{-1}$.

Fix a constant $C_\mathcal{S}\geq 1$. Suppose the skewing function $g$ is such that
\begin{equation}\label{indentity under orbit}
g^{\ell(p)}_{p}(t)=t,
\end{equation} for all $t\in\RR$ and $p\in P$. Suppose that
\begin{equation}\label{non expanding}
|h^{n}_x(t)-h^{n}_x(t')|\leq C_{\mathcal{S}}|t-t'|
\end{equation}
for all $x\in X$. Let $\ep>0$ be small. Define the collection of open balls (intervals) in $X$ of radius $\ep$ centred on $p\in P$ by  $B_\ep(P) = \bigcup_{p\in P} B_\ep(p)$ and let $G=X\backslash B_\ep(P)$.
 For an orbit segment $x,\dots, T^{n-1}x\in B_\ep(P)$, denote $\ep_j = \inf_{p\in P}d(T^j x, p)$. Suppose we can define a function $\la:[0,\ep]\to \RR^+$ where $\log\la$ is $\al$-H\"{o}lder for some $\al>0$, with $\la(\ep)>0$ and $\la(0)=1$ such that
\begin{equation}\label{slow contracting}
|h^{n}_x(t)-h^{n}_x(t')|\geq C_{\mathcal{S}}^{-1}\la(\ep_0)\dots\la(\ep_{n-1})|t-t'|.
\end{equation}
Let $\de$ be such that $0<\de\leq \ep$. Suppose that the orbit segment $x,\dots, T^{n-1}x$ visits $X\backslash B_\de(P)$ $j$-many times, let there exist $0<\la_\de<1$ such that
\begin{equation}\label{fast contracting}
|h^{n}_x(t)-h^{n}_x(t')|\leq C_{\mathcal{S}}\la_\de^j|t-t'|,
\end{equation}
where $\la_\de$ is independent of $x$, $t$ and $t'$. We also assume that both $x\mapsto g_x(t)$ and $x\mapsto h_x(t)$ are $\al$-H\"{o}lder continuous,
\begin{equation}\label{g holder}
|g_x(t)- g_y(t)|\leq C_g(t)d(x,y)^\al \hspace{.5cm}\text{ and }\hspace{.5cm}
|h_x(t)- h_y(t)|\leq C_{h}(t)d(x,y)^\al,
\end{equation}
for some $\al>0$. If $t\in W$ and $W\subset \RR$ is compact, then define $C_W=\sup_{t\in W}\{C_g(t), C_{h}(t)\}$.

As $x\mapsto g_x(t)$ is continuous and $X$ is compact, there exists $K_g>0$ independent of $x$ such that
\begin{equation}\label{g bounded}
|g^n_x(t)|\leq \sup_{0\leq j\leq n} \sup_{x\in X}|g^n_x(t)|= K_g(t,n).
\end{equation}
Finally, as $g$ is invertible and continuous, there exists $\la_{\min}>0$ such that
\begin{equation}\label{g non0}
|h^{n}_x(t)-h^{n}_x(t')|\geq C^{-1}_{\mathcal{S}}\la_{\min}^n|t-t'|.
\end{equation}
Let $T:X\to X$ be a continuous, expanding, locally eventually onto Markov map. We denote the set of skew products of the form \eqref{skew product} satisfying conditions \eqref{indentity under orbit} to \eqref{g holder} with fixed constant $C_\mathcal{S}$ by $\mathcal{S}(X,\RR, C^\al_P)$.

\begin{Rem}\label{examples and constant}
Suppose $g_x(t) = f(x)+\ga^{-1}(x)t$ as in Remark \ref{diffble remark}, where $\sum_{i=0}^{\ell(p)}f(T^ip)=0$, $0<\ga(x)\leq 1$ and $\ga(p)=1$ if and only if $p\in P$. The function $\la:[0,\ep]\to \RR^+$ in condition \eqref{slow contracting} can be defined as $\la(\ep_x) = |\ga(x)|$ for $x\in B_\ep(P)$ where $\ep_x = \inf_{p\in P}d(x,p)$. In this case, for $\de\leq \ep$, $\la_\de$ in condition \eqref{fast contracting} is chosen as $\sup_{x\not\in B_\de(P)}|\ga(x)|<1$; also $\la_{\min}=\m(|\ga|)$.

For these skew products, the constant $C_\mathcal{S}=1$. Allowing the constant to be larger than $1$ allows $|\ga(x)|>1$ at some $x\in X$ provided that $|\ga^n(x)|<C_{\mathcal{S}}$ for all $n$, hence the Lyapunov exponent is non-positive. Throughout, we fix the constant $C_\mathcal{S}$ as, when we make a small perturbation of the skew product, the constant does not necessarily perturb independent of $n$. This is important in the proof of Theorem \ref{open dense f}.

More generally, for $g_x$ a diffeomorphism with $0<\la_{\min}\leq |\del h_x(t)|<1$ for $x\not\in P$ and $h^{\ell(p)}_p(t)=t$ if and only if $p\in P$, we define $\la(\ep_x) = \m(|\del h_x|)$ and $\la_\de = \sup_{x\not\in B_\de(P)}\|\del h_x\|_\infty$.
\end{Rem}

\begin{Rem}
Notice, unlike \cite{Bedford} and \cite{HNW} we have no partial hyperbolicity condition in general.
\end{Rem}

\begin{Rem}
Suppose that the skewing function is not the identity over the periodic orbits of $P$ but has zero Lyapunov exponent. Say $g_x(t) = f(x)+ \ga(x)^{-1}t$, where $\ga^\ell(p)=1$ but $f(p)>0$ for $p\in P$. In this setting, it is easy to see that the invariant graph is unbounded on pre-images of $P$. In fact, the two basins (of points repelled to $\pm\infty$) appear intermingled in the sense of \cite{AYYK, AshwinPodvigina, KellerSD}.
\end{Rem}

\begin{Rem}
For Theorem \ref{existence and uniqueness of la1} and Corollary \ref{existence of the us} we do not need $T$ to be locally eventually onto, rather just continuous and expanding.
\end{Rem}

\section{Properties of Markov maps}

To prove Theorem \ref{existence and uniqueness of la1}, we will need
the following technical lemma.  The content of the lemma is surely well-known, but we include a proof for completeness.
\begin{Lem}\label{aux lem bad set}
Let $T:X\to X$ be a continuous expanding Markov map of the circle.
Let $p$ be a periodic point and let $P$ denote the periodic orbit of
$p$.  There exists $\ep>0$ such that for all $\de$ where $0<\de\leq
\ep$ if the orbit sequence $\{x, Tx,\dots, T^nx\}\subset B_\de(P)$
with $x\in B_\de(p)$, then $d(T^jx,T^jp)<\de\theta^{n-j}$ for all
$0\leq j\leq n$.
\begin{proof}
If $x=p$, then, as $P$ is $T$-invariant, the result is trivial.

We first prove that if $x\in B_\de(p)$ and $Tx\in B_\de(P)$, then $Tx\in B_\de(Tp)$. Let $t_j$ be the lower end-point of the interval $X_j$. Let $Q = P\cup \{t_i\}_{i=0}^{b-1}$. Let $d = \inf_{q_i\not = q_j\in Q}d(q_i,q_j)$.  Choose $\ep < d/\|T'\|_\infty 4$ and let $0 < \de \leq \ep$.
As $\|T\|_\infty>1$, $\de<d/4$ and so for $p\in P$ the balls $B_\de(p)$ are pairwise disjoint.

For $x\not=p$, $x$ is located in precisely one of $(p-\de,p)$ or $(p,p+\de)$ and on these sets $T$ is differentiable (note, $p$ could be equal to $t_j$ for some $j$, hence we split the interval $B_\de(p)$ at $p$). By the Mean Value Theorem, $d(Tx, Tp)\leq\|T'\|_\infty d(x,p)<d/4$.
Hence the set $T(B_\de(p))$ has diameter at most $d/2$ and
\begin{equation}\label{ball contained in ball}
T(B_\de(p))\subset B_{d/2}(Tp).
\end{equation}
By assumption, $Tx\in B_\de(P)$. We show that $Tx\in B_\de(Tp)$. Suppose not, say $Tx\in B_\de(p')$ with $p'\in P$, $p'\not= Tp$. Then $Tx\in B_\de(p')\cap T(B_\de(p))$. By \eqref{ball contained in ball}, $Tx\in B_{d/2}(p')\cap B_{d/2}(Tp)$, contradicting that the points of $P$ are at least distance $d$ apart. So $Tx\in B_\de(Tp)$.

Suppose that $p\in\text{Int}(X_j)$ for some $j$; therefore $\w_j(Tp)=p$. By choice of $\de$, $T:B_\de(x)\to X$ is a diffeomorphism on $B_\de(x)\subset X_j$. Therefore, the inverse branch $\w_j:T(B_\de(x))\to X_j$ is a diffeomorphism on $T(B_\de(x))$. As $T$ is expanding, $T(B_\de(p))\supset B_\de(Tp)$. By the Mean Value Theorem, for $x\in B_\de(p)$,
\begin{align}\label{techy lemma mvt}
\theta\geq \frac{d(\w_j(Tx),\w_j(Tp))}{d(Tx,Tp)}= \frac{d(x,p)}{d(Tx,Tp)}.
\end{align}
As $Tx\in B_\de(Tp)$, we have $d(Tx,Tp)<\de$ and so $d(x,p)<\theta\de$.

Finally, consider the case where $p$ is the unique point of intersection $p \in X_{j-1}\cap X_{j}$. Then $\w_j(Tp)=p=\w_{j-1}(Tp)$. Either $x\in (p-\ep, p)\subset X_{j-1}$ or $x\in (p,p+\ep)\subset X_j$. In either case, by applying the appropriate inverse branch, the result follows by the idea of \eqref{techy lemma mvt} above, replacing the ball $B_\de(p)$ with intervals $(p,p+\ep)$ or $(p-\ep,p)$. By iterating this argument, if $T^jx\in B_\de(P)$ for all $0\leq j\leq n$, then $d(T^jx,T^jp)<\de\theta^{n-j}$, as required.
\end{proof}
\end{Lem}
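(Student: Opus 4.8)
The plan is to choose $\ep$ small enough that the balls $B_\de(p)$, $p\in P$, are well separated and that $T$ does not move any of them too far, and then to control the backward contraction one branch at a time. Set $Q=P\cup\{t_0,\dots,t_{b-1}\}$, the union of the periodic orbit with the partition endpoints, let $d=\inf_{q\neq q'\in Q}d(q,q')>0$, and take $\ep<d/(4\|T'\|_\infty)$; note $\ep<d/4$ since $\|T'\|_\infty>1$. For any $0<\de\le\ep$ this forces the $B_\de(p)$, $p\in P$, to be pairwise disjoint, and applying the Mean Value Theorem on each branch (splitting $B_\de(p)$ at $p$ if $p$ happens to be a partition point) gives $\diam T(B_\de(p))\le 2\|T'\|_\infty\de<d/2$, hence $T(B_\de(p))\subset B_{d/2}(Tp)$. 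Moreover, because $|T'|\ge\theta^{-1}>1$ on each branch, $T(B_\de(p))$ contains a (one- or two-sided) neighbourhood of $Tp$ of radius at least $\de$, so the inverse branch carrying $Tp$ back to $p$ is defined on all of $B_\de(Tp)\cap T(B_\de(p))$.

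With this in place I would argue in two steps. First, an orbit segment trapped in $B_\de(P)$ and starting in $B_\de(p)$ must shadow the orbit of $p$: $T^jx\in B_\de(T^jp)$ for all $0\le j\le n$. This follows by induction on $j$ — given $T^jx\in B_\de(T^jp)$ and $T^{j+1}x\in B_\de(P)$, the point $T^{j+1}x$ lies in $T(B_\de(T^jp))\subset B_{d/2}(T^{j+1}p)$ and also in some $B_\de(p')$ with $p'\in P$, so $p'=T^{j+1}p$ because $\de<d/2$ and distinct points of $P$ are at least $d$ apart. Second, with $T^jx\in B_\de(T^jp)$ and $T^{j+1}x\in B_\de(T^{j+1}p)$ both established, let $\w$ be the inverse branch of $T$ with $\w(T^{j+1}p)=T^jp$; since $d(\w y,\w y')\le\theta\,d(y,y')$,
\[
d(T^jx,T^jp)=d\bigl(\w(T^{j+1}x),\w(T^{j+1}p)\bigr)\le\theta\,d(T^{j+1}x,T^{j+1}p).
\]
Iterating this downward from $j=n$, and using $d(T^nx,T^np)<\de$ (which is the case $j=n$ of the first step), yields $d(T^jx,T^jp)<\theta^{n-j}\de$, as required.

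The only delicate point, and hence the main obstacle, is when some $T^jp$ coincides with a partition endpoint $t_i$: then $T$ is not differentiable at $T^jp$ and $B_\de(T^jp)$ meets the two branches $X_{i-1}$ and $X_i$. Here one splits $B_\de(T^jp)$ at $T^jp$ into the one-sided intervals $(T^jp-\de,T^jp)$ and $(T^jp,T^jp+\de)$; since $x$ lies on a definite side of $p$ and $T$ restricted to a branch is a homeomorphism onto its image, $T^jx$ lies on a definite side of $T^jp$, and one applies the corresponding inverse branch $\w_{i-1}$ or $\w_i$, whose contraction factor is still $\theta$ (the constant is uniform over branches), so the estimate above is unaffected — one just has to track which branch is in play at each step. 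Including the $t_i$ in $Q$ is precisely what makes this harmless, since then $B_\de(p)$ can meet at most the two branches adjacent to $p$. A comparatively minor point is that the inverse branch used in the second step must actually be defined at $T^{j+1}x$, which is exactly where expansion of $T$ (not merely injectivity on branches) is used.
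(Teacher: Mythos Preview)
Your proof is correct and follows essentially the same approach as the paper: the same set $Q=P\cup\{t_i\}$ and separation constant $d$, the same choice $\ep<d/(4\|T'\|_\infty)$, the same Mean Value Theorem argument to show $T(B_\de(p))\subset B_{d/2}(Tp)$ and hence that the orbit shadows $P$ pointwise, and the same use of inverse-branch contraction iterated backward from $j=n$, with the partition-endpoint case handled by splitting $B_\de(p)$ at $p$. Your explicit remark that expansion (not just injectivity) is needed to guarantee the inverse branch is defined on $B_\de(Tp)$ matches the paper's observation that $T(B_\de(p))\supset B_\de(Tp)$.
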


The following corollary is also well-known: the only orbit that $\de$-shadows a periodic orbit (for $\de$ sufficiently small) is the periodic orbit itself.
\begin{Cor}\label{cor to aux lem}
Let $\de>0$ be as in Lemma \ref{aux lem bad set}. Suppose for all $n\in\NN$ we have $T^nx\in B_\de(P)$.  Then $x\in P$.
\begin{proof}
Suppose $x\not\in P$.  There exists $\eta>0$ such that, for all $p\in P$, $d(x,p)>\eta$. As $T^nx\in B_\de(P)$ for all $n\in\NN$, for any $m\in \NN$ the orbit segment $\{x,\dots, T^mx\}\subset B_\de(P)$. By Lemma \ref{aux lem bad set}, for some $p\in P$, we have $d(x,p)< \de\theta^m$.  Choose $m$ large such that $\de\theta^m<\eta$, contradicting our assumption that $x\not\in P$.
\end{proof}
\end{Cor}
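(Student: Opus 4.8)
The plan is to argue by contradiction and obtain the conclusion as an immediate consequence of Lemma~\ref{aux lem bad set}. Suppose $x\notin P$. Since $P$ is a finite set of points (a finite union of finite periodic orbits), there is $\eta>0$ with $d(x,p)>\eta$ for every $p\in P$.

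First I would note that the hypothesis $T^nx\in B_\de(P)$ for all $n\in\NN$ implies, for each fixed $m\in\NN$, that the finite orbit segment $\{x,Tx,\dots,T^mx\}$ lies in $B_\de(P)$, and in particular that $x\in B_\de(p)$ for some $p\in P$. Because $\de\le\ep$ and $\ep$ is chosen as in Lemma~\ref{aux lem bad set} so that the balls $\{B_\de(p)\}_{p\in P}$ are pairwise disjoint, this $p$ is uniquely determined; moreover, as the proof of Lemma~\ref{aux lem bad set} shows (the step ``if $x\in B_\de(p)$ and $Tx\in B_\de(P)$ then $Tx\in B_\de(Tp)$''), the segment in fact shadows the single periodic orbit of $p$, which is exactly the hypothesis of that lemma.

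Then I would apply Lemma~\ref{aux lem bad set} to the segment $\{x,\dots,T^mx\}$ with $x\in B_\de(p)$, obtaining $d(T^jx,T^jp)<\de\,\theta^{m-j}$ for all $0\le j\le m$. Taking $j=0$ gives $d(x,p)<\de\,\theta^{m}$. Since $\theta<1$, choosing $m$ large enough that $\de\,\theta^{m}<\eta$ yields $d(x,p)<\eta$, contradicting the choice of $\eta$. Hence $x\in P$.

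There is no genuine obstacle here; the corollary follows directly from Lemma~\ref{aux lem bad set}. The only points deserving a word of care are the unambiguous choice of the periodic point $p$ with $x\in B_\de(p)$ (guaranteed by disjointness of the finitely many balls $B_\de(p)$) and the passage from ``$T^nx\in B_\de(P)$ for every $n$'' to the finite-segment shadowing hypothesis of the lemma, with finiteness of $P$ supplying the separation constant $\eta$.
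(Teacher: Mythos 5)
Your proof is correct and follows essentially the same route as the paper: assume $x\notin P$, use finiteness of $P$ to get a separation constant $\eta$, apply Lemma~\ref{aux lem bad set} to the segment $\{x,\dots,T^mx\}$ to obtain $d(x,p)<\de\theta^m$, and choose $m$ large enough to contradict $d(x,p)>\eta$. The extra remarks about the unique choice of $p$ and the disjointness of the balls $B_\de(p)$ are harmless elaborations of the same argument.
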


In order to prove the existence of an invariant graph, we show that $h^{n}_x(t)\to u(x)$. In our setting, when $x$ is very close to $P$, the rate of contraction of $h_x(t)$ is not uniformly bounded below $1$. Thus, supposing that the orbit segment $Tx,\dots, T^{n-1}x$ remains in $B_\de(P)$, the next result uses Lemma \ref{aux lem bad set} to bound $h^{n}_x(t)$ independently of $x$ and $n$.

\begin{Lem}\label{proving f bounded on bad orbits}
Let $T:X\to X$ be a continuous expanding Markov Map of the
interval. Let $0<\de\leq\ep<1$ be as in Lemma \ref{aux lem bad
  set}. Let $g\in C^\al_P$. Let $n\geq 2$. If the orbit segment
$\{Tx,\dots, T^{n-1}x\}$ lies in $B_\de(P)$, then $|h^{n}_x(t)-t|\leq
A_h(t)$, where $A_h(t)>0$ is independent of $n$ and $x$, but not $t$.
\begin{proof}
Let $\ell$ be the least period of $p\in P$. Let $Tx\in B_\de(Tp)$. Let $n_\ell$ be the smallest integer $n_\ell\geq n$ divisible by $\ell$, so $h^{n_\ell}_p(t)=(t)$ and $n_\ell-n \leq \ell-1$

As $\{Tx, \dots, T^{n-1}x\}\subset B_\de(P)$, by Lemma \ref{aux lem
  bad set}, for $0\leq i\leq n-2$, $d(T^i(Tx), T^i(Tp))\leq
\theta^{n-2-i}\de$.  Let $T^{n_\ell-n}q=p$. As $h^{n_\ell}_q$ is the
identity,
\begin{align}\label{bound to show f bound}
|h^{n}_x(t)-t| &= |h^{n}_x(t)- h^{n_\ell}_q(t)| \leq |h^{n}_x(t)-h^{n}_p(t)|+|h^{n}_p(t)-h^{n}_p(h^{n_\ell-n}_q(t))|.
\end{align}
We bound the first term of \eqref{bound to show f bound}. By \eqref{non expanding} and Lemma \ref{aux lem bad set},
\begin{align*}
\nonumber |h^{n}_x(t)- h^{n}_p(t)|&\leq \sum_{i=0}^{n-1}|h^{i}_xh_{T^ix}h^{n-1-i}_{T^{i+1}p}(t)-h^{i}_xh_{T^ip}h^{n-1-i}_{T^{i+1}p}(t)|\\
\nonumber &\leq C_{\mathcal{S}}C_g(h^{n-1-i}_{T^{i+1}p}(t))\sum_{i=0}^{n-1}d(T^ix,T^ip)^{\al}\\
&\leq C_{\mathcal{S}}C_g(h^{n-1-i}_{T^{i+1}p}(t))\left(1+\sum_{i=0}^{n-2}\theta^{\al(n-2-i)}\de\right) \leq C(\ell, t).
\end{align*}
as $h^{n-1-i}_{T^{i+1}p}(t)\in [-K_g(\ell, t),K_g(\ell, t)]=:W$, the H\"{o}lder constant $C_g(h^{n-1-i}_{T^{i+1}p}(t))<C_W$. We bound the second term of \eqref{bound to show f bound} by
\begin{align*}
|h^{n}_p(t)-h^{n}_p(h^{n_\ell-n}_q(t))|&\leq C_{\mathcal{S}}|t-h^{n_\ell-n}_q(t)|\leq C_{\mathcal{S}}(|t|+K_g(t,\ell))
\end{align*}
as $n_\ell-n\leq \ell$. Hence, we have the bound as required.
\end{proof}
\end{Lem}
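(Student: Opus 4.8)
The plan is to use the shadowing from Lemma~\ref{aux lem bad set} to replace the composition $h^n_x$ by one taken along a periodic orbit, along which \eqref{indentity under orbit} forces an exact cancellation. Since $\{Tx,\dots,T^{n-1}x\}\subset B_\de(P)$, fix $p\in P$ with $Tx\in B_\de(Tp)$ and let $\ell=\ell(p)$; applying Lemma~\ref{aux lem bad set} to the point $Tx$ and the periodic point $Tp$ gives $d(T^ix,T^ip)\le\de\theta^{\,n-1-i}$ for $1\le i\le n-1$, while for $i=0$ we use only the crude bound $d(x,p)\le 1$. Let $n_\ell$ be the least multiple of $\ell$ with $n_\ell\ge n$, so $0\le n_\ell-n\le\ell-1$, and put $q=T^np\in P$. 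Concatenating factor lists, $h^n_p\circ h^{n_\ell-n}_q=h^{n_\ell}_p$, which is the identity by \eqref{indentity under orbit} (iterated, since $n_\ell$ is a multiple of $\ell$); in particular $h^n_p\big(h^{n_\ell-n}_q(t)\big)=t$.

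Inserting $h^n_p(t)$ gives
\begin{align*}
|h^n_x(t)-t| &=\big|h^n_x(t)-h^n_p\big(h^{n_\ell-n}_q(t)\big)\big|\\
&\le \big|h^n_x(t)-h^n_p(t)\big|+\big|h^n_p(t)-h^n_p\big(h^{n_\ell-n}_q(t)\big)\big|,
\end{align*}
and it remains to bound the two terms independently of $x$ and $n$. For the first I would telescope, swapping the $i$-th factor $h_{T^ix}$ for $h_{T^ip}$ one at a time: peeling off the outer block $h^i_x$ using the \emph{length-uniform} non-expansion estimate \eqref{non expanding} (constant $C_{\mathcal{S}}$ independent of $i$) bounds the $i$-th difference by $C_{\mathcal{S}}\big|h_{T^ix}(s_i)-h_{T^ip}(s_i)\big|$ with $s_i=h^{n-1-i}_{T^{i+1}p}(t)$, and then the base-variable \Holder bound \eqref{g holder} gives $C_{\mathcal{S}}C_h(s_i)\,d(T^ix,T^ip)^{\al}$. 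Summing the distances yields a geometric series, $\sum_{i=0}^{n-1}d(T^ix,T^ip)^{\al}\le 1+\de^{\al}/(1-\theta^{\al})$, so once the $C_h(s_i)$ are controlled this term is bounded by a constant depending only on $\ell$, $t$ and the fixed data. The second term is immediate from \eqref{non expanding}: it is at most $C_{\mathcal{S}}\,|t-h^{n_\ell-n}_q(t)|$, and since $q\in P$ and $n_\ell-n<\ell$, $h^{n_\ell-n}_q(t)$ is one of finitely many values, so this is at most $C_{\mathcal{S}}\big(|t|+K(\ell,t)\big)$. Setting $A_h(t)$ equal to the sum of the two bounds, maximised over the finitely many periods $\ell(p)$, $p\in P$, completes the proof.

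The main obstacle is exactly the control of the backward tails $s_i=h^{n-1-i}_{T^{i+1}p}(t)$: a priori these are arbitrarily long compositions of fibre maps and could escape to infinity, which would make the \Holder constants $C_h(s_i)$ in the telescope unbounded and the argument collapse. The resolution uses \eqref{indentity under orbit} again, this time applied to $h$ rather than $g$: because the base point $T^{i+1}p$ lies on the periodic orbit, $h^{n-1-i}_{T^{i+1}p}=h^{(n-1-i)\bmod\ell}_{T^{i+1}p}$, so every $s_i$ lies in the finite set $\{h^j_q(t):q\in P,\ 0\le j<\ell\}$ and hence in a fixed compact $W$, on which \eqref{g holder} provides the uniform bound $C_h(s_i)\le C_W$. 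The analogous observation bounds $h^{n_\ell-n}_q(t)$ in the second term. With these in place the remaining estimates are routine.
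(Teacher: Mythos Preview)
Your proof is correct and follows essentially the same route as the paper: the same triangle-inequality split via $h^n_p(t)$, the same telescope for the first term controlled by \eqref{non expanding}, \eqref{g holder} and the geometric decay from Lemma~\ref{aux lem bad set}, and the same use of \eqref{non expanding} plus $n_\ell-n<\ell$ for the second. Your identification of the key obstacle---bounding the tails $s_i=h^{n-1-i}_{T^{i+1}p}(t)$---and its resolution via the periodic cancellation $h^{n-1-i}_{T^{i+1}p}=h^{(n-1-i)\bmod\ell}_{T^{i+1}p}$ is exactly what the paper encodes in the line $h^{n-1-i}_{T^{i+1}p}(t)\in[-K_g(\ell,t),K_g(\ell,t)]$; your phrasing is if anything more explicit about why this works.
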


\section{Existence of the invariant graph}
We consider the existence and uniqueness of the invariant graph and
prove Theorem \ref{existence and uniqueness of la1} and Corollary
\ref{existence of the us}.
\begin{proof}[Proof of Theorem \ref{existence and uniqueness of la1}]
Let $\de$ be sufficiently small so that Lemma \ref{aux lem bad set} holds. Define $u_n:\XbP\times\RR\to \RR$ by
$u_n(x,t) = h^{n}_x(t)$.
By Corollary \ref{cor to aux lem}, the set of points with orbit that visits $X\setminus B_\ep(P)$ infinitely often are precisely $\XbP$. Denote the set $X\setminus B_\de(P) = G$.

Suppose that the orbit segment $T^{n_{i-1}+1}x, \dots, T^{n_i-1}x\in B_\de(P)$ and $T^{n_i}x\in G$. If $x\not\in G$, we let $n_{-1}=0$ and $n_0$ is the first visit to $G$, and if $x\in G$ then $n_0=0$. In both cases $n_1$ is the first return to $G$. Define
\[
H_{T^{n_{i-1}}x} = h^{n_i-n_{i-1}}_{T^{n_{i-1}}x}.
\]
Let $j$ be the number of times the orbit segment $x,\dots, T^nx$ visits $G$.  We have
\[
h^{n}_x(t) = H^{j}_x(t) = H_xH_{T^{n_1}x}\dots H_{T^{n_{j-2}}x}H_{T^{n_{j-1}}x}(t).
\]
By Lemma \ref{proving f bounded on bad orbits}, $|H_{T^{n_{i-1}}x}(t)- t|\leq A_h(t)$, where $A_h$ is independent of $n, i$ and $x$ but depends on $t$. Fix $n,m\in\NN$. Let $n_j\leq n<m\leq n_k$ where $T^{n_j}x$ is the largest $n_j\leq n$ such that $T^{n_j}x\in G$ and $n_k$ is the smallest $n_k\geq m$ such that $T^{n_k}x\in G$. Fixing $t\in\RR$, we bound
\[
|h^{n}_x(t)-h^{m}_x(t)| = |H^{j}_x(h^{n-n_j}_{T^{n_j}x}(t))-H^{j}_x(h^{m-n_j}_{T^{n_j}x}(t))| \leq \la_\de^j|h^{n-n_j}_{T^{n_j}x}(t)-h^{m-n_j}_{T^{n_j}x}(t)|.
\]
As $n,m\to\infty$, we have $\la_\de^j\to 0$. By repeatedly applying Lemma \ref{proving f bounded on bad orbits},
\begin{align}\label{existence bound}
\nonumber |h^{n-n_j}_{T^{n_j}x}(t)-h^{m-n_j}_{T^{n_j}x}(t)| &= |h^{n-n_j}_{T^{n_j}x}(t)-H^{k-1-j}_{T^{n_j}x}(h^{m-n_{k-1}}_{T^{n_{k-1}}x}(t))|\\
&\nonumber\leq |h^{n-n_j}_{T^{n_j}x}(t) - t|+|t-H_{T^{n_j}x}(t)|+|H_{T^{n_j}x}(t)-H^{2}_{T^{n_j}x}(t)|+\\
&\nonumber\hspace{2cm}\dots + |H^{k-j-1}_{T^{n_j}x}(t)- H^{k-j-1}_{T^{n_j}x}(h^{m-n_{k-1}}_{T^{n_{k-1}}x}(t))|\\
&\leq A_h(t)+\sum_{i=0}^{k-j-1}\la_\de^i A_h(t)\leq C(t)
\end{align}
for some $C(t)>0$. Therefore, for each $x,t$, the sequence $h^{n}_x(t)$ is Cauchy. By \eqref{fast contracting}, it follows that the limit is independent of $t$. We can define a uniformly bounded (on $\XbP$) function $u$ by
$u(x) = \lim_{n\to\infty}u_n(x,t)=\lim_{j\to\infty}H^{j}_x(t)$.
Then, $u$ is an invariant graph on $\XbP$ as
$u(Tx)=\lim_{n\to\infty}h^{n}_{Tx}(t)= g_x\left(\lim_{n\to\infty}h^{n}_x(t)\right) = g_x(u(x))$.
To prove uniqueness, suppose $v$ is another invariant graph and suppose that $v$ is uniformly bounded on $\XbP$.  For $x\in\XbP$, and all $n\in\NN$, we have $v(x) = h^{n}_x(v(T^nx))$. So,
\[
h^{n}_x(-\|v\|_\infty)\leq v(x) \leq h^{n}_x(\|v\|_\infty).
\]
As $h^{n}_x(t)$ converges to $u(x)$ as $n\to\infty$ and the limit is independent of $t$, $v(x)=u(x)$ for $x\in\XbP$.

Now, suppose that $v$ is only measurable. Let $B>0$, Suppose $W=\{x\mid |u(x)|<B\}$.  Let $\mu$ be an ergodic measure for $T$ that is not supported on $R_P$.  For $B$ sufficiently large, $\mu(W)>0$. By the ergodicity of $T$, for $\mu$-a.e. $x$ there exists subsequence $n_m$ such that $T^{n_m}x\in W$. Hence,
\[
h^{n_m}_x(-B)\leq u(x) \leq h^{n_m}_x(B).
\]
As $m\to\infty$, we have $h^{n_m}_x(t)\to u(x)$ independently of $t$.
Hence $v=u$ $\mu$-a.e.
\end{proof}

While the bound on $u$ is uniform in $x$, the rate of convergence is not uniform; this lack of uniform
convergence gives the invariant graph its interesting non-continuous
structure.

\subsection{Proof of Corollary \ref{existence of the us}}
We now consider invariant graphs when the dynamics is restricted to
the set $R_P$ and prove Corollary \ref{existence of the us}.

\begin{proof}[Proof of Corollary \ref{existence of the us}]
Let $P_r$ be a periodic orbit in $P$ and, for each $r$, choose $p_r
\in P_r$.  Recall $\ell(p_r)$ denotes the least period of $p_r$.  As
$u$ is assumed to be $\skewT$-invariant on $R_P$, we have
$g_{p_r}(u(p_r))=u(Tp_r)$.  Iterating this we have
$g^\ell_{p_r}(u(p_r)) = u(T^{\ell(p_r)} p_r)=u(p_r)$.  As
$g^{\ell(p_r)}_{p}$ is the identity, any value of $u(p_r)$ satisfies
this equation.

Choose (arbitrarily) $s=(s_0,\dots, s_{\rho-1})\in \RR^\rho$.  For our
choice of $p_r$, define $u_s(p_r) = s_r$.  As $u_s(Tx)=g_x(u(x))$,
this then defines $u_s$ on $P$.  Note that $u_s (\w_j p)) = h_{\w_j
  p}u_s(p)$; we can iterate this to define
$u_s$ on $R_P$.

Define $u_s=u$ on $\XbP$.  It remains to show that $u_s$ has a bound
depending only on $s$ and $u:\XbP\to\RR$. We only need to consider
points in $R_P$ as $u_s(x)=u(x)$ for $x\in\XbP$. Let $x\in R_P$, so
for some $N\in\NN$ we have $T^Nx\in P$, say $T^Nx=p$. We have
$g^N_{x}(u_s(x))= u_s(p)$.
As $u_s(p)=s_r$ it suffices to bound $h^{N}_x(s_r)$ for $s_r\in\RR$. If $x\in P$, then $x$ is a periodic point and, by \eqref{g bounded}, $|g^N_p(s_r)|\leq K_g(N,s_r)\leq K_g(\ell,s_r)$. Otherwise, let $J$ be the total number of visits of the orbit of $x$ to $G$, denoting each visit $T^{N_i}x\in G$ for $0\leq i<J-1$ and $N=N_J$ so that $T^{N_J}x=p$. Again, if $x\not\in G$, let $N_{-1}=0$ so that $N_0$ is the first visit to $G$.

Using the notation of the proof of Theorem \ref{existence and uniqueness of la1}, we write $h^{N}_x(s_r) = H^{J-1}_xh^{N-N_{J-1}}_{T^{N_{J-1}}x}(s_r)$. By the same arguments as \eqref{existence bound}, we can bound $|H^{J-1}_x(h^{N-N_{J-1}}_{T^{N_{J-1}}x}(s_r)) - s_r|<C(s_r)$ for some $C(s_r)>0$. Therefore $h^{N}_x(s_r)$ is bounded independently of $N$.
\end{proof}

\subsection{Reducing to fixed points}
Having shown that the invariant sets described in Theorem
\ref{existence and uniqueness of la1} and Corollary \ref{existence of
  the us} exist for $\skewT$, it will be useful in what follows to
replace $T$ by a power and assume that $P$ consists of fixed points
such that $g_p(t)=t$ for all $p\in P, t \in \RR$.  Moreover, we can
also assume that $T$ is full branched.  The following allows us to do
this.


\begin{Prop}\label{fixed pts ok}
Let $P$ be a set of periodic orbits of $X$. Let the skew product $\skewT\in \mathcal{S}(X,\RR,C^\al_P)$ and let $N$ be the least integer such that $T^NX_j=X$ for all $0\leq j\leq b-1$. Let $\ell=k\ell_p$ where $k\in\NN$ and $\ell_p$ is the lowest common multiple of the periods of orbits in $P$ and $N$. Let $R_P$ be the set of pre-images of $P$ under $\skewT$ and $R_P^\ell$ be the set of pre-images of $P$ under $\skewT^\ell$. Then $R_P=R^\ell_P$, the skew product $\skewT^\ell|_{X\setminus R^\ell_P}$ has unique, uniformly bounded invariant graph $u:X\setminus R^\ell_P\to \RR$ and the graph is equal to to the unique uniformly bounded invariant graph of the skew product $\skewT|_{\XbP}$.

\begin{proof}
By our choice of $\ell$, the set $P$ consists of points that are fixed under $T^\ell:X\to X$. Let $x\in R_P$. So, there exists $n_0\in\NN$ such that for $n\geq n_0$ we have $T^nx\in P$. Choose $M>n_0$ the smallest integer such that $M=\ell K$ for $K\in \NN$. Let $k\geq K$. Then $T^{\ell k}x \in P$, so $x\in R^\ell_P$. For the other direction, if $x\in R^\ell_P$ then there exists $K\in\NN$ such that, for all $k\geq K$, $T^{\ell k}x \in P$. As $P$ is $T$ invariant, $T^nx\in P$ for all $n\geq \ell K$, therefore $x\in R_P$. Thus, $R_P=R^\ell_P$.

Let $g^\ell_x = \ga_x.$ Then, $\skewT^\ell(x,t)= (T^\ell x, \ga_x(t))$ is contained in $\mathcal{S}(X,\RR,C^\al_P)$, hence satisfies the conditions of Theorem \ref{existence and uniqueness of la1} and so there exists a unique, bounded invariant graph of $\tilde T_{\ga}$ say $u':\XbP\to \RR$.

Let $u:\XbP\to\RR$ be the unique invariant graph of $\skewT|_{\XbP}$. Clearly, $u$ is also an invariant graph of $\tilde T_{\ga}|_{X\setminus R^\ell_P}$. As $R_P=R^\ell_P$ and the invariant graph is unique, $u'=u$.
\end{proof}
\end{Prop}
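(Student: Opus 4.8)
The plan is to verify that the power skew product $\skewT^\ell$ again belongs to the class $\mathcal S(X,\RR,C^\al_P)$ — now with $P$ a set of \emph{fixed} points of the (full branched) base map $T^\ell$ — and then to read the three assertions off Theorem~\ref{existence and uniqueness of la1} applied to $\skewT^\ell$, invoking its uniqueness clause to identify the two invariant graphs.

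First I would dispose of the equality $R_P=R_P^\ell$. Because $\ell$ is a common multiple of all the periods $\ell(p)$, $p\in P$, each $p\in P$ is fixed by $T^\ell$, so $P$ (hence the condition defining $R_P$) is $T^\ell$-invariant. If $x\in R_P$, say $T^nx\in P$, then by $T$-invariance of $P$ also $T^mx\in P$ for all $m\ge n$; picking $m=\ell K\ge n$ gives $(T^\ell)^Kx\in P$, so $x\in R_P^\ell$. Conversely $x\in R_P^\ell$ means $T^{\ell K}x\in P$ for some $K$, which is already of the form $T^nx\in P$, so $x\in R_P$. Thus $R_P=R_P^\ell$ and in particular $X\setminus R_P^\ell=\XbP$.

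Writing $\ga_x=g^\ell_x$ so that $\skewT^\ell=\tilde T_\ga$ with $\tilde T_\ga(x,t)=(T^\ell x,\ga_x(t))$, the bulk of the work is to check $\tilde T_\ga\in\mathcal S(X,\RR,C^\al_P)$. The base $T^\ell$ is a continuous expanding Markov map with expansion constant $\theta^\ell<1$; since $N\mid\ell$ it is full branched, hence locally eventually onto, and its inverse branches are $\ell$-fold products of the $\w_j$, so distances are contracted by $\theta^\ell$. For the fibre conditions the crucial observation is that the inverse cocycle of $\tilde T_\ga$ along the $T^\ell$-orbit of $x$ is exactly $h^{\ell n}_x$; hence \eqref{non expanding}, \eqref{g non0} and \eqref{g bounded} for $\tilde T_\ga$ are just instances of the corresponding conditions for $\skewT$ (with the \emph{same} constant $C_\mathcal S$). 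Condition \eqref{indentity under orbit} holds since, for $p\in P$, iterating $g^{\ell(p)}_p=\Id$ along the periodic orbit gives $\ga_p=g^\ell_p=\Id$. For the slow/fast contraction conditions \eqref{slow contracting}, \eqref{fast contracting} one keeps the same neighbourhood radii $\ep,\de$ and takes the new $\la$-function (resp.\ $\la_\de$) to be the $\ell$-step product of the old ones; here one must note that a $T^\ell$-orbit segment lying in $B_\ep(P)$ may have intermediate $T$-iterates outside $B_\ep(P)$, but since $\la\le 1$ and those steps are controlled by \eqref{non expanding} this only sharpens the estimate. Finally the \Holder condition \eqref{g holder} for $x\mapsto\ga_x(t)$ and $x\mapsto h^\ell_x(t)$ follows from a telescoping estimate using \eqref{g holder}, \eqref{non expanding} and the uniform bound \eqref{g bounded}, exactly in the style of Lemma~\ref{proving f bounded on bad orbits}. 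I expect this verification to be the main obstacle, and within it the only genuinely delicate point is keeping the constant $C_\mathcal S$ from growing with $n$ — which is why the statement fixes $C_\mathcal S$ in advance.

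Once $\tilde T_\ga\in\mathcal S(X,\RR,C^\al_P)$ is established, Theorem~\ref{existence and uniqueness of la1} produces a unique uniformly bounded invariant graph $u':X\setminus R_P^\ell\to\RR$ of $\tilde T_\ga|_{X\setminus R_P^\ell}$. For the last claim, let $u:\XbP\to\RR$ be the invariant graph of $\skewT|_{\XbP}$ from Theorem~\ref{existence and uniqueness of la1}; iterating $u(Tx)=g_x(u(x))$ $\ell$ times gives $u(T^\ell x)=g^\ell_x(u(x))=\ga_x(u(x))$, and $u$ is uniformly bounded on $\XbP=X\setminus R_P^\ell$, so $u$ is a uniformly bounded invariant graph for $\tilde T_\ga|_{X\setminus R_P^\ell}$. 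By the uniqueness part of Theorem~\ref{existence and uniqueness of la1} applied to $\tilde T_\ga$, $u=u'$, giving the asserted identification.
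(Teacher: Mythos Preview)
Your proposal is correct and follows essentially the same three-step structure as the paper's proof: establish $R_P=R_P^\ell$, observe that $\skewT^\ell\in\mathcal{S}(X,\RR,C^\al_P)$ so Theorem~\ref{existence and uniqueness of la1} applies, and then use the uniqueness clause to identify the two invariant graphs. The only difference is that you spell out the verification of class membership in some detail, whereas the paper simply asserts it in one line; your sketch of the \eqref{slow contracting}/\eqref{fast contracting} verification is slightly imprecise (intermediate $T$-iterates outside $B_\ep(P)$ are handled via \eqref{g non0} for the lower bound, not \eqref{non expanding}), but this does not affect the overall correctness of the approach.
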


From now on we will assume without loss of generality that the skew
product is such that $T$ is full branched and $P$ consists only of
fixed points.

\begin{Rem}\label{g orient pres}
Proposition \ref{fixed pts ok} also allows us to assume $g$ is orientation preserving. As each $g_x:\RR\to\RR$ is invertible, $g^2_x$ is orientation preserving and by Proposition \ref{fixed pts ok} the skew product $\tilde T_{g^2}$ has the same invariant graph as $\skewT$.  (Note that we cannot assume that $T$ is orientation preserving: consider the tent map.)
\end{Rem}

\section{The invariant quasi-graph}
\subsection{Structure of the invariant quasi-graph}
Recall Definition \ref{quasi def} of the quasi-graph $U$. As the invariant graph $u$ is unique on $\XbP$, the quasi-graph exists and is unique.

\begin{Prop}\label{quasi-graph inv}
The quasi-graph $U$ is $\skewT$-invariant.
\begin{proof}
By definition $U|_{\XbP}=\text{graph}(u(x))$ and we know that the graph of $u:\XbP\to\RR$ is $\skewT|_{\XbP}$-invariant, thus we only need to show that $\skewT U_x = U_{Tx}$ for all $x\in R_P$. Let $x\in R_P$. By Remark \ref{g orient pres}, we can assume each $g_x$ is orientation preserving. As $g_x$ is continuous and $\XbP$ is $T$-invariant,
\begin{align}\label{quasi inv eq}
\skewT\left(x,\limsup_{y \in \XbP, y\to x} u(y)\right) &= \left(Tx, \limsup_{y\in\XbP, y \to x}g_{y}(u(y))\right).
\end{align}
For all $y\in \XbP$, we have $g_y(u(y)) = u(Ty)$, so
\[
\skewT\left(x,\limsup_{y\in\XbP, y \to x} u(y)\right) = \left(Tx,\limsup_{y\in\XbP, y \to x}u(Ty)\right).
\]
As $\XbP$ is $T$-invariant and $T$ is continuous, as $y\to x$ we have $Ty\to Tx$ through a sequence in $\XbP$. So, $\limsup_{y\in\XbP, y \to x}u(Ty) = \limsup_{Ty \in \XbP, Ty\to Tx}u(Ty)$. By changing notation $Ty\mapsto y$, we have
\begin{align*}
\skewT\left(x,\limsup_{y\in\XbP, y \to x} u(y)\right) = \left(Tx, \limsup_{y \in \XbP, y\to Tx}u(y)\right).
\end{align*}
An identical argument holds for the $\liminf_{y\in\XbP, y \to x} u(y)$. By the Intermediate Value Theorem, for any $(x,t)\in U_x$ we have $\skewT(x,t)\in U_{Tx}$. Therefore, $\skewT (U_x) = U_{Tx}$.
Taking the union of all of these, as $R_P$ is $T$-invariant, $\skewT(U_{R_P}) = U_{R_P}$. We already know $\skewT(U_{\XbP}) = U_{\XbP}$, hence we have $\skewT(U) = U$.
\end{proof}
\end{Prop}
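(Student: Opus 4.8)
The plan is to reduce the equality $\skewT(U)=U$ to a fibrewise statement over each point of $R_P$ and then track how $\skewT$ moves the vertical interval $U_x$. Since $U=U_{\XbP}\cup U_{R_P}$, since both $\XbP$ and $R_P$ are $T$-invariant, and since $U_{\XbP}=\mathrm{graph}(u)$ is already $\skewT|_{\XbP}$-invariant by Theorem~\ref{existence and uniqueness of la1}, it suffices to prove $\skewT(U_x)=U_{Tx}$ for every $x\in R_P$; taking the union over the $T$-invariant set $R_P$ then gives $\skewT(U_{R_P})=U_{R_P}$, and combining the two pieces yields $\skewT(U)=U$.

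Fix $x\in R_P$ and set $a_x=\liminf_{y\to x,\,y\in\XbP}u(y)$, $b_x=\limsup_{y\to x,\,y\in\XbP}u(y)$, both finite since $u$ is uniformly bounded on $\XbP$; write $W=[-\|u\|_\infty,\|u\|_\infty]$. By Remark~\ref{g orient pres} we may assume each $g_x$ is orientation preserving (the orientation-reversing case is identical after swapping $\liminf$ and $\limsup$), so, being an increasing homeomorphism of $\RR$, $g_x$ carries $[a_x,b_x]$ onto $[g_x(a_x),g_x(b_x)]$, and $\skewT(U_x)=\{Tx\}\times[g_x(a_x),g_x(b_x)]$. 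The proposition thus reduces to the identities $g_x(b_x)=\limsup_{z\to Tx,\,z\in\XbP}u(z)$ and $g_x(a_x)=\liminf_{z\to Tx,\,z\in\XbP}u(z)$, which I would prove in two steps (the statements for $a_x$ and $b_x$ being identical). For step one, note that an increasing homeomorphism commutes with $\limsup$ of bounded sequences, and that $x\mapsto g_x(t)$ being $\al$-\Holder uniformly for $t\in W$ (constant $C_W$ from \eqref{g holder}) gives $|g_x(u(y))-g_y(u(y))|\le C_Wd(x,y)^{\al}\to0$ as $y\to x$; together with the invariance relation $g_y(u(y))=u(Ty)$ for $y\in\XbP$, this yields $g_x(b_x)=\limsup_{y\to x,\,y\in\XbP}u(Ty)$.

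Step two is to show $\limsup_{y\to x,\,y\in\XbP}u(Ty)=\limsup_{z\to Tx,\,z\in\XbP}u(z)$. The inequality ``$\le$'' is immediate, since for $y\to x$ in $\XbP$ the images $z=Ty\to Tx$ lie in $\XbP$ and compete for the right-hand side. For ``$\ge$'', use that under the standing normalization of Proposition~\ref{fixed pts ok} $T$ is full branched, so every inverse branch $\w_j$ of $T$ is globally defined and continuous; pick $j$ with $\w_j(Tx)=x$. Given $z_k\to Tx$ in $\XbP$ realizing the right-hand $\limsup$, the points $y_k=\w_j(z_k)\to x$ satisfy $Ty_k=z_k$ and lie in $\XbP$ (if $T^m\w_j(z_k)\in P$ for some $m\geq0$ then, since $P$ is $T$-invariant, $z_k=T\w_j(z_k)$ would lie in $R_P$, a contradiction), so $\limsup_{y\to x}u(Ty)\geq\lim_k u(z_k)=\limsup_{z\to Tx}u(z)$. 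Running the same two steps for $\liminf$ gives $\skewT(U_x)=\{Tx\}\times[\,\liminf_{z\to Tx}u,\ \limsup_{z\to Tx}u\,]=U_{Tx}$, which by the first paragraph completes the proof. Alternatively, $\skewT(U_x)\subseteq U_{Tx}$ follows from the Intermediate Value Theorem once $g_x(a_x),g_x(b_x)$ are identified as accumulation values of $u$ at $Tx$, and the reverse inclusion by the same argument applied to $h_x=g_x^{-1}$.

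I expect step two to carry the genuine content: matching the one-sided accumulation of $u$ along $T$-preimages approaching $x$ with the accumulation of $u$ approaching $Tx$. Everything else is set-theoretic bookkeeping or a direct consequence of continuity and the \Holder estimate \eqref{g holder}; it is precisely in this matching that the local invertibility of $T$ — and, at partition endpoints, the need to work with a one-sided branch — together with the $T$-invariance of $\XbP$ genuinely enter.
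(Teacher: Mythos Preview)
Your proof is correct and follows the same approach as the paper: reduce to the fibrewise equality $\skewT(U_x)=U_{Tx}$ for $x\in R_P$, track the interval endpoints via continuity and monotonicity of $g$ together with the relation $g_y(u(y))=u(Ty)$, then match $\limsup_{y\to x,\,y\in\XbP}u(Ty)$ with $\limsup_{z\to Tx,\,z\in\XbP}u(z)$. You supply more detail than the paper in two places---the \Holder estimate \eqref{g holder} justifying the passage from $g_x$ to $g_y$, and the explicit use of an inverse branch for the ``$\geq$'' direction of step two---but these fill in points the paper leaves implicit rather than constituting a different route.
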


We now prove that if the quasi-graph is discontinuous at any point $p\in P$, then it is discontinuous on the dense set of pre-images of the fixed point $p$.

\begin{Prop}\label{lengths of Ux decrease}
Let $R_{p}$ denote the pre-images of a point $p\in P$. The length $|U_x|=0$ for some $x\in R_{p}$ if and only if $|U_x|=0$ for every $x\in R_{p}$.
\begin{proof}
Let $x\in R_{p}$ and $|U_x|=0$. We can define $u(x)= \lim_{y\in\XbP, y \to x}u(y)$. Thus, we have extended $u$ to the point $x\in R_{p}$ and $u$ is continuous at $x$. As $g_x$ is a continuous function and $g_x(u(x)) = u(Tx)$, so $u$ is continuous at $Tx$. Iterating, as $T^Nx = p$ for some $N\in\NN$, we have that $u$ is continuous at $p$.

Now, let $z\in R_{p}$. There exists $N$ such that $T^Nz=p$. By the invariance of $u$, $h^N_z(u(p))=u(z)$. As, $h^N_z$ is continuous, $u$ is continuous at $z$. So $|U_z|=0$. The proof of the other direction is trivial.
\end{proof}
\end{Prop}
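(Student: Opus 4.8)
The key first step is to reformulate the hypothesis. For $x\in R_p$, the condition $|U_x|=0$ says exactly that $\limsup$ and $\liminf$ of $u$ along sequences $y\to x$ with $y\in\XbP$ coincide, i.e.\ that $\lim_{y\to x,\,y\in\XbP}u(y)$ exists (this limit is finite, since $u$ is uniformly bounded on $\XbP$ by Theorem~\ref{existence and uniqueness of la1}); equivalently, $u$ extends continuously to the point $x$, and we write $u(x)$ for the extended value. Phrased this way, the proposition asserts that $Z:=\{x\in R_p:u\text{ extends continuously at }x\}$ is either empty or all of $R_p$. The implication ``$|U_x|=0$ for every $x\in R_p$'' $\Rightarrow$ ``$|U_x|=0$ for some $x\in R_p$'' is trivial since $p\in R_p$, so the content is $Z\neq\emptyset\Rightarrow Z=R_p$. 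I would prove this in two stages: push continuity forward along the $T$-orbit of a point $x\in Z$ until reaching the fixed point $p$ (so $p\in Z$), and then pull continuity back from $p$ to every point of $R_p$.

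Throughout, the limit interchanges below are legitimised by two uniform bounds: \eqref{non expanding} with $n=1$ makes each $h_x$ Lipschitz in $t$ with constant $C_{\mathcal S}$ independent of $x$, and \eqref{g non0} with $n=1$ makes each $g_x$ Lipschitz in $t$ with constant $C_{\mathcal S}\la_{\min}^{-1}$ independent of $x$; together with the $\al$-\Holder dependence on the base point in \eqref{g holder}, these give joint continuity of $(x,t)\mapsto g_x(t)$ and $(x,t)\mapsto h_x(t)$, hence of the finite compositions $g^N_x$, $h^N_x$ for any fixed $N$. For the forward stage, let $x\in Z$. For $y\in\XbP$ close to $x$ the invariance identity \eqref{invariant graph eq} gives $u(Ty)=g_y(u(y))$, and $Ty\in\XbP$ since $\XbP$ is fully $T$-invariant (both $T(\XbP)$ and $T^{-1}(\XbP)$ equal $\XbP$). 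Letting $y\to x$ and using $u(y)\to u(x)$ with the joint continuity of $g$, we get $u(Ty)\to g_x(u(x))$. Since the inverse branches of $T$ are homeomorphisms, the points $Ty$ (for $y\in\XbP$ near $x$) fill a punctured neighbourhood of $Tx$ inside $\XbP$, so $\lim_{z\to Tx,\,z\in\XbP}u(z)=g_x(u(x))$ exists; thus $Tx\in Z$. Iterating along $x,Tx,\dots,T^Nx=p$ yields $p\in Z$.

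For the backward stage, assume $p\in Z$ and take any $z\in R_p$, say $T^Nz=p$. Inverting invariance, $u(y)=h^N_y(u(T^Ny))$ for every $y\in\XbP$. As $y\to z$ with $y\in\XbP$, continuity of $T^N$ gives $T^Ny\to p$ with $T^Ny\in\XbP$, so $u(T^Ny)\to u(p)$ by continuity of the extension at $p$; meanwhile $|h^N_y(s)-h^N_y(s')|\le C_{\mathcal S}|s-s'|$ uniformly in $y$ by \eqref{non expanding}, and $y\mapsto h^N_y(u(p))$ is continuous by \eqref{g holder}. Splitting $|h^N_y(u(T^Ny))-h^N_z(u(p))|$ through the intermediate value $h^N_y(u(p))$ then shows $u(y)\to h^N_z(u(p))$, so $\lim_{y\to z,\,y\in\XbP}u(y)$ exists and $z\in Z$. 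As $z$ was arbitrary, $Z=R_p$, completing the proof.

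The step I expect to be the real obstacle is the claim in the forward stage that the images $Ty$ cover a full punctured neighbourhood of $Tx$: this uses that $T$ is an open map near $x$, which can fail at the (finitely many) endpoints of the Markov partition that happen to lie on the orbit $x,Tx,\dots,T^{N-1}x$ — for instance, the tent map is not open at its critical point, and indeed for the doubling map the fixed point $0$ is a partition endpoint. At such a point one must argue instead with all of its finitely many $T$-pre-images, using surjectivity of the branches together with the fact that $p$ is a fixed point, so that a punctured neighbourhood of the point is still covered by $T$-images of points lying near its various pre-images; this is routine but requires some care. Where the Markov partition can be refined so that its endpoints avoid the (countable) orbit, one can simply bypass this and run the clean argument directly.
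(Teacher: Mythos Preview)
Your proof follows exactly the paper's two-stage strategy: push continuity forward from $x$ along its orbit to $p$ via the maps $g_{T^jx}$, then pull it back from $p$ to every $z\in R_p$ via $h^N_z$. The paper's version is terser---it simply asserts ``$g_x$ is continuous and $g_x(u(x))=u(Tx)$, so $u$ is continuous at $Tx$'' without your explicit joint-continuity bookkeeping and without mentioning the openness subtlety you flag; that subtlety is real (the paper's proof of Proposition~\ref{quasi-graph inv} has the same implicit step at ``$\limsup_{y\to x}u(Ty)=\limsup_{Ty\to Tx}u(Ty)$''), and your suggested repair via the remaining inverse branches is the correct fix, as one can verify directly in the tent-map case.
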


\section{Proof of Theorems \ref{dichotomy thm} and \ref{open dense f}}

\subsection{Proof of Theorem \ref{dichotomy thm}}
\begin{proof}[Proof of Theorem \ref{dichotomy thm}]
Suppose that there exists $x\in R_P$ such that $|U_{x}|=0$.  Then, by
Proposition \ref{lengths of Ux decrease}, $|U_{p}|=0$ for some $p\in P$. We show that we can extend $u:\XbP\to\RR$ to an
$\al$-H\"{o}lder function $u_s:X\to\RR$, for some choice of $s \in \RR^\rho$, by iterating backwards towards
the fixed point $p$. As $U$ is the graph of $u_s$ the dichotomy is
proved.

Suppose $p\in X_j$. As $p$ is fixed under $T$ and $T$ is full
branched, there exists an inverse branch $\w:X\to X_j$ such that
$\w(p)=p$. As $\w$ is contracting, for any $x\in X$, $\w^nx\to p$ as
$n\to\infty$. Hence, $u(x) = g^n_{\w^nx}(u(\w^nx))$.  Let $x,y\in
\XbP$. Let $m$ be such that $\theta^m< \ep$. As $d(\w x, \w y) \leq \theta d(x,y)$, we have that
$\w^{m+i}x\in B_\ep(P)$ for all $i\in\NN$. Furthermore, by Lemma
\ref{aux lem bad set},
\begin{equation}\label{iterate far back enough close}
d(\w^{m+i}x,P)\leq \theta^i\ep.
\end{equation}
Using \eqref{non expanding}, \eqref{slow contracting} and \eqref{g non0},
\begin{align*}
|t-t'| &= |h^{n}_{\w^nx}(g^n_{\w^nx}(t))-h^{n}_{\w^nx}(g^n_{\w^nx}(t'))|\\
&= |h^{m}_{x}h^{n-m}_{\w^nx}(g^n_{\w^nx}(t))-h^{m}_{x}h^{n-m}_{\w^nx}(g^n_{\w^nx}(t'))|\\
&\geq C_{\mathcal{S}}^{-1}\la_{\min}^{m}|h^{n-m}_{\w^nx}(g^n_{\w^nx}(t))-h^{n-m}_{\w^nx}(g^n_{\w^nx}(t'))|\\
&\geq C_{\mathcal{S}}^{-1}\la_{\min}^{m}\la(\ep)\dots\la(\theta^{n-m}\ep)|g^n_{\w^nx}(t)-g^n_{\w^nx}(t')|.
\end{align*}
As $\log\la$ is $C^\al$ as a function of $\ep$ and $\log\la(0)=0$, using \eqref{iterate far back enough close},
\[
\left|-\sum_{i=0}^{n-m}\log\la(\theta^i\ep)\right|\leq\sum_{i=0}^{n-m}|\log\la(0)-\log\la(\theta^i\ep)|\leq \sum_{i=0}^\infty C_{\log\la}|\theta^{\al i}\ep| =: \log\Lambda.
\]
Exponentiating, $(\la(\ep)\dots\la(\theta^{n-m}\ep))^{-1} \leq \Lambda$ for some $\Lambda>0$ independent of $n$. As $m$ is fixed by $\ep$ and $T$, the $\la_{\min}^{m}$ term is absorbed into the constant. So, there exists $C_0>0$ independent of $x$, $n$ and $t$ such that
\begin{equation}\label{bounded when it back}
|g^n_{\w^nx}(t)-g^n_{\w^nx}(t')|\leq C_0\Lambda^{-1}|t-t'|.
\end{equation}
Consider
\begin{align}\label{thing to bound for C1 dichotomy}
\nonumber |u(x)-u(y)| &= |g^n_{\w^nx}(u(\w^nx))-g^n_{\w^ny}(u(\w^ny))|\\
\nonumber&\leq \sum_{i=0}^{n-1}|g^i_{\w^ix}g_{w_r^{i+1}x}g^{n-1-i}_{\w^ny}(u(\w^ny))-g^i_{\w^ix}g_{w_r^{i+1}y}g^{n-1-i}_{\w^ny}(u(\w^ny))|\\
&\hspace{2cm} + |g^n_{\w^nx}(u(\w^nx))-g^n_{\w^nx}(u(\w^ny))|.
\end{align}
Using \eqref{bounded when it back}, we know that
\begin{align*}
|g^n_{\w^nx}(u(\w^nx))-g^n_{\w^nx}(u(\w^ny))| \leq C_0\Lambda^{-1}|u(\w^nx)-u(\w^ny)|.
\end{align*}
As $\w^nx,\w^ny\to p$ and we assume that $u$ is continuous at $p$, this term tends to $0$ as $n\to\infty$. Letting $s=g_{w_r^{i+1}x}(t)$ and $s'=g_{w_r^{i+1}y}(t)$ where $t=g^{n-1-i}_{\w^ny}(u(\w^ny))$, we have
\begin{align*}
|g^i_{\w^ix}(s)-g^i_{\w^ix}(s')|\leq C_0\Lambda^{-1}|s-s'|.
\end{align*}
and
\begin{align*}
|s-s'|=|g_{w_r^{i+1}x}(t)-g_{w_r^{i+1}y}(t)|\leq C_g(t)d(w_r^{i+1}x,w_r^{i+1}y)^{\al}\leq C_g(t)\theta^{\al(i+1)}d(x,y)^{\al}.
\end{align*}
As $u$ is uniformly bounded and $\skewT$-invariant, we have $t=
u(\w^{n-1-i}y)$ and so $|t|\leq \|u\|_\infty$. Hence $t$ is contained
in a compact set $W\subset \RR$.
Thus
$C_g(t)\leq C_W$. As $\theta<1$, we can find a constant $C_u>0$
such that \eqref{thing to bound for C1 dichotomy} is bounded by
\[
|u(x)-u(y)|<C_ud(x,y)^{\al}.
\]
Therefore, $u$ is a uniformly $\al$-H\"{o}lder continuous function on
$\XbP$. As $\XbP$ is dense in $X$, we can uniquely extend $u$ to a
uniformly $\al$-H\"{o}lder continuous function $u_s : X \to \RR$ for
some $s\in\RR^\rho$. As $g,T$ and $u_s$ are continuous and $u$ is an
invariant graph on $\XbP$, $u_s$ must be an invariant graph on
$X$. Therefore, the quasi-graph $U=\textrm{graph}(u)$ is an
$\al$-H\"{o}lder continuous invariant graph of the skew product.
\end{proof}

\begin{Rem}\label{gn converges}
By \eqref{bounded when it back}, we have
\begin{align*}
|g^{n+1}_{\w^{n+1}x}(t)-g^n_{\w^nx}(t)|&=|g^n_{\w^{n}x}(g_{\w^{n+1}x}(t))-g^n_{\w^{n}x}(t)|\leq C_0\Lambda^{-1}|g_{\w^{n+1}x}(t)-t|.
\end{align*}
Furthermore, for $n$ sufficiently large, by \eqref{iterate far back enough close}, $d(\w^{n+1}x,p)\leq C\theta^{n-m}\ep$ for fixed $m$ independent of $x$. So,
\[
|g_{\w^{n+1}x}(t)-t| = |g_{\w^{n+1}x}(t)-g_p(t)|\leq C_g(t)\la(\theta^n\ep)\leq C_g(t)C\theta^{\al (n-m)}\ep^{\al}.
\]
As $\theta<1$, the sequence $g^n_{\w^nx}(t)$ is Cauchy (in $n$) and converges pointwise with $g^n_{\w^nx}(t)\to D_x(t)$ for some function $D_x:\RR\to\RR$. By \eqref{bounded when it back} we have $D_x$ is Lipschitz continuous as a function of $t$.
\end{Rem}

\subsection{Proof of Theorem \ref{open dense f}}
We prove this result in two parts, first the density of such skew products and second the openness.

\begin{Lem}\label{dense perturb f}
Let $\skewT\in \mathcal{S}(X,\RR, C^\al_P)$. There exists a $C^0$-dense set of $g\in C^\al_P$ such that $U$ is not the graph of a function.
\begin{proof}
By Theorem \ref{dichotomy thm}, it suffices to assume that there exists $s\in\RR^\rho$ such that the invariant graph $u_s$ of $\skewT\in \mathcal{S}(X,\RR,C^\al_P)$ is continuous at $p\in P$ and find a small perturbation of $g\mapsto \hat g$ within $C^\al_P$ such that, for all $s\in\RR^\rho$, the invariant graph $\hat u_s$ of $\tilde T_{\hat g}$ is not continuous at $p$. Equivalently, the quasi-graph of $\hat u$ is not the graph of a function. To do this, we assume that both $u_s$ and $\hat u_s$ are continuous for some $s,\hat s \in \RR^\rho$ and obtain a contradiction. As $u_s, \hat u_s$ are the unique, continuous extensions of $u, \hat u:\XbP\to\RR$, we denote the invariant graphs $u_s=u$, $\hat u_s=\hat u$

Let $q\not\in R_P$ be a periodic point. By Proposition \ref{fixed pts ok}, we can assume that $T$ is full branched with at least two inverse branches. Therefore, there exist disjoint sequences of pre-images of $q$, say $\w^nq\to p$ and $\nu^nq\to p$. As the orbit of $q\notin P\subset R_P$, for $n>N$ sufficiently large $\w^nq$ is sufficiently close to $p$ that $\w^nq$ cannot be in the orbit of $q$. Let $V$ be a small neighbourhood of $\w^j q$ for some $j> N$, disjoint from: $\nu^nq$ for all $n\in\NN$, $\w^nq$ for $n\not=j$, the orbit of $q$, and the set $P$. By making a $C^0$ perturbation $g\mapsto \hat g$ on $V$ with $\hat g\in C^\al_P$, we can ensure that
\[
|\hat g^j_q(u(p)) - g^j_q(u(p))|\geq \ep_1> 0
\]
for some $\ep_1>0$. We have not perturbed $g$ at $\w^nq$ for $n\not=j$, so $g_{\w^nq}=\hat g_{\w^nq}$. By inverting condition \eqref{non expanding}, for all $n>j$
\begin{align*}
|\hat g^n_q(u(p)) - g^n_q(u(p))|=|g^{n-j}_{T^jq}\hat g^j_q(u(p))- g^{n-j}_{T^jq}g^j_q(u(p))|\geq C_{\mathcal{S}}^{-1}\ep_1.
\end{align*}
By Remark \ref{gn converges}, as $n\to\infty$, we have
\begin{equation}\label{Ds differ}
|D_q(u(p))-\hat D_q(u(p))|\geq C_{\mathcal{S}}^{-1}\ep_1.
\end{equation}
As $V$ is disjoint from the orbit of $q$, we have $h_{T^iq}=\hat h_{T^iq}$ for all $i\geq 0$. As $q\notin R_P$, by Theorem \ref{existence and uniqueness of la1}, $u(q)=\hat u(q)$.

As $V$ is, also, disjoint from $\nu^iq$ for all $i\geq 0$, we also have $g_{\nu^iq}=\hat g_{\nu^iq}$ for all $i\geq 0$. So, $g^n_{\nu^n q}(u(\nu^nq))=u(q)$
and $\hat g^n_{\nu^n q}(\hat u(\nu^nq))=g^n_{\nu^n q}(\hat u(\nu^nq)) = \hat u(q)$. Thus, $u(\nu^nq)=\hat u(\nu^nq)$ for all $n\in\NN$. In particular, as $u,\hat u$ are continuous,
\begin{equation}\label{up equal up}
\hat u(p) = u(p).
\end{equation}
As both $g^n_{\w^n q}(u(\w^nq)) = u(q)$ and $\hat g^n_{\w^n q}(\hat u(\w^nq)) = \hat u(q)$, we obtain
\begin{align}\label{to show stuff not equal}
\nonumber 0= u(q)-\hat u(q) &= g^n_{\w^nq}(u(\w^nq))-\hat g^n_{\w^nq}(\hat u(\w^nq))\\
&=g^n_{\w^nq}(u(\w^nq))-\hat g^n_{\w^nq} (u(\w^nq))+\hat g^n_{\w^nq} (u(\w^nq))-\hat g^n_{\w^nq}(\hat u(\w^nq)).
\end{align}
As $u$ and $\hat u$ are continuous, by assumption, and $D_q$ is Lipschitz continuous, we have $g^n_{\w^nq} (u(\w^nq))\to D_q(u(p))$ and similarly for $\hat D_q(u(p))$. By \eqref{Ds differ}, for all $n>N$ with $N$ sufficiently large, there exists $\de_0>0$ such that
\begin{equation}\label{stuff not equal}
|g^n_{\w^nq}(u(\w^nq))-\hat g^n_{\w^nq} (u(\w^nq))|>\de_0.
\end{equation}
Substituting \eqref{stuff not equal} into \eqref{to show stuff not equal},
\[
|\hat g^n_{\w^nq} (u(\w^nq))-\hat g^n_{\w^nq}(\hat u(\w^nq))|>\de_0,
\]
for all $n> N$. Therefore, $\hat D_q(u(p))\not=\hat D_q(\hat u(p))$, hence $u(p)\not= \hat u(p)$, contradicting \eqref{up equal up}. As the perturbation $g\mapsto \hat g$ is arbitrarily small in the uniform norm, we have a $C^0$-dense set of $g\in C_P^\al$ such that for all $s\in\RR$ the invariant graph $u_s$ is not continuous.
\end{proof}
\end{Lem}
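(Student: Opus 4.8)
The plan is to combine the dichotomy of Theorem~\ref{dichotomy thm} with Proposition~\ref{lengths of Ux decrease}. After replacing $T$ by a power (Proposition~\ref{fixed pts ok}) we may assume $T$ is full branched, hence has at least two inverse branches, and that $P$ consists of fixed points. By Theorem~\ref{dichotomy thm} the quasi-graph $U$ of a skew product fails to be the graph of a function precisely when there is \emph{no} $s\in\RR^{\rho}$ for which the invariant function $u_{s}$ of Corollary~\ref{existence of the us} is continuous at a fixed point $p\in P$; and by Proposition~\ref{lengths of Ux decrease}, continuity at one such $p$ is equivalent to continuity everywhere. It therefore suffices to show: if $g\in C^{\al}_{P}$ is such that some $u_{s}$ \emph{is} continuous at a fixed point $p$, then every $C^{0}$-neighbourhood of $g$ contains some $\hat g\in C^{\al}_{P}$ whose invariant function is discontinuous at $p$. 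I would build such a $\hat g$ and argue by contradiction: assuming that the invariant functions of \emph{both} $\skewT$ and $\tilde T_{\hat g}$ extend continuously to $p$, I derive a contradiction; write $u$ and $\hat u$ for these continuous functions on $X$.

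The perturbation is localised at a single point of a backward orbit. Fix a periodic point $q\notin R_{P}$; using that $T$ is full branched, choose two disjoint sequences of preimages of $q$, both converging to $p$, which we denote $\w^{n}q\to p$ and $\nu^{n}q\to p$. For $j$ large, $\w^{j}q$ lies near $p$, hence outside the finite forward orbit of $q$ and outside $P$; fix such a $j$ and a small neighbourhood $V$ of $\w^{j}q$ disjoint from $P$, from the forward orbit of $q$, from every $\nu^{n}q$, and from every $\w^{n}q$ with $n\neq j$. Let $\hat g$ be a $C^{0}$-small perturbation of $g$ supported on $V$. Since $V$ is a fixed distance from $P$, the conditions \eqref{indentity under orbit}--\eqref{g holder} persist with the \emph{same} constant $C_{\mathcal{S}}$ --- this is why $C_{\mathcal{S}}$ is fixed once and for all in the definition of $\mathcal{S}(X,\RR,C^{\al}_{P})$ --- so $\hat g\in C^{\al}_{P}$. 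In the decomposition $g^{n}_{\w^{n}q}=g^{j-1}_{\w^{j-1}q}\circ g_{\w^{j}q}\circ g^{n-j}_{\w^{n}q}$ only the middle factor is altered. Choosing the $t$-dependence of the perturbation so that the skewing map at $\w^{j}q$ is genuinely displaced, by some $\ep_{1}>0$, on a fixed compact interval containing all the points $g^{n-j}_{\w^{n}q}(u(p))$, $n>j$, and inverting the non-expansion bound \eqref{non expanding} to carry this displacement through the fixed outer composition $g^{j-1}_{\w^{j-1}q}$, I obtain $|\hat g^{n}_{\w^{n}q}(u(p))-g^{n}_{\w^{n}q}(u(p))|\ge C_{\mathcal{S}}^{-1}\ep_{1}$ for all large $n$. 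Letting $n\to\infty$ and using the pointwise convergence $g^{n}_{\w^{n}x}(t)\to D_{x}(t)$ with $D_{x}$ uniformly Lipschitz from Remark~\ref{gn converges},
\[
|D_{q}(u(p))-\hat D_{q}(u(p))|\ge C_{\mathcal{S}}^{-1}\ep_{1}>0.
\]

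Finally I would compare $u$ and $\hat u$. Since $V$ avoids the finite forward orbit of $q$, the maps $h_{T^{i}q}$ are unchanged, so the construction $u(q)=\lim_{n}h^{n}_{q}(t)$ of Theorem~\ref{existence and uniqueness of la1} gives $u(q)=\hat u(q)$. Since $V$ also avoids every $\nu^{n}q$, the invariance identity $u(\nu^{n}q)=h^{n}_{\nu^{n}q}(u(q))$ is likewise unperturbed, so $u(\nu^{n}q)=\hat u(\nu^{n}q)$ for all $n$; letting $n\to\infty$ and using continuity of $u$ and $\hat u$ at $p$ gives $u(p)=\hat u(p)$. On the other hand, from $u(q)=g^{n}_{\w^{n}q}(u(\w^{n}q))$ and $\hat u(q)=\hat g^{n}_{\w^{n}q}(\hat u(\w^{n}q))$, continuity at $p$ together with the uniform Lipschitz bound on $g^{n}_{\w^{n}\cdot}$ from Remark~\ref{gn converges} gives $u(q)=D_{q}(u(p))$ and $\hat u(q)=\hat D_{q}(\hat u(p))=\hat D_{q}(u(p))$; since $u(q)=\hat u(q)$, this forces $D_{q}(u(p))=\hat D_{q}(u(p))$, contradicting the displayed inequality. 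Hence the invariant function of $\tilde T_{\hat g}$ cannot extend continuously to $p$, so the quasi-graph of $\tilde T_{\hat g}$ is not the graph of a function; since $\hat g$ may be taken arbitrarily $C^{0}$-close to $g$, such skew products form a $C^{0}$-dense subset of $C^{\al}_{P}$. I expect the main obstacle to be the second paragraph: arranging a $C^{0}$-small perturbation that simultaneously stays inside $C^{\al}_{P}$ with the same constant $C_{\mathcal{S}}$ and genuinely moves the backward limit $D_{q}(u(p))$, and then being sure this movement is not silently cancelled once the invariant values $u(\w^{n}q)$ themselves change --- which is exactly what the equality $u(q)=\hat u(q)$, combined with the $D_{q}$-estimate, ultimately rules out.
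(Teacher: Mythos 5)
Your proposal is correct and follows essentially the same route as the paper's proof: the same reduction via Theorem \ref{dichotomy thm}, the same perturbation supported near a single backward iterate $\w^j q$ of a periodic point $q\notin R_P$ with two preimage branches $\w^nq,\nu^nq\to p$, the same use of Remark \ref{gn converges} to get $|D_q(u(p))-\hat D_q(u(p))|\geq C_{\mathcal S}^{-1}\ep_1$, and the same comparisons $u(q)=\hat u(q)$, $u(\nu^nq)=\hat u(\nu^nq)$, $u(p)=\hat u(p)$ leading to a contradiction. Your explicit three-factor decomposition $g^{n}_{\w^{n}q}=g^{j-1}_{\w^{j-1}q}\circ g_{\w^{j}q}\circ g^{n-j}_{\w^{n}q}$ and the direct identities $u(q)=D_q(u(p))$, $\hat u(q)=\hat D_q(u(p))$ are only a (slightly cleaner) rephrasing of the paper's final splitting, not a different argument.
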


We now show that if $U$ is not the graph of an invariant function $u$, then for any small $C^\al$ perturbation of $g\mapsto \hat g$ within $C^\al_P$, the quasi-graph $\hat U$ is, also, not the graph of an invariant function $\hat u$.

\begin{Lem}\label{open perturb f}
Let $\skewT\in \mathcal{S}(X,\RR, C^\al_P)$ be such that the invariant quasi-graph is not the graph of a function. There is a $C^\al$-open set of perturbations $g\mapsto \hat g$ such that the invariant quasi graph $\hat U$ of $\tilde T_{\hat g}$ is not the graph of a function.
\begin{proof}
Suppose that, for a given $g\in C^\al_P$, $U$ is not the graph of a function. Let $p\in P$. By Theorem \ref{dichotomy thm}, there exists $\ga>0$ such that $|U_{p}|\geq\ga$. Let $\ka\in \NN$ and let $\de_\ka>0$ be a sequence such that $\de_\ka\to 0$ as $\ka\to\infty$. Let $0<\ga_0<\ga$. Let $B_{\de_\ka}(p)$ be the open ball of radius $\de_\ka$ about $p$. As $U$ is not the graph of a continuous function, for any $\ka$ there exists $x_\ka, y_\ka\in B_{\de_\ka}(p)\setminus R_P$ such that $|u(x_\ka) - u(y_\ka)|\geq \ga_0$.

We prove that for a sufficiently small perturbation, there exists $\hat\ga>0$ such that $|\hat U_{p}|\geq\hat\ga>0$. We first claim that, for any $\ep_0>0$ there is a $C^\al$-open set of perturbations of $g\mapsto \hat g$ within $C^\al_P$, made precise in (\ref{define open perturb}), such that, for all $x\in \XbP$,
\begin{equation}\label{claim open}
|u(x) - \hat u(x)|<\ep_0,
\end{equation}
where $\hat u:\XbP\to\RR$ is the invariant graph of the perturbed skew product. The claim proves the lemma as, for all $\ka\in\NN$,
\begin{align*}
0<\ga_0\leq|u(x_\ka)-u(y_\ka)| &\leq |u(x_\ka)-\hat u(x_\ka)|+|u(y_\ka)-\hat u(y_\ka)|+|\hat u(x_\ka) - \hat u(y_\ka)|\\
		&\leq |\hat u(x_\ka) - \hat u(y_\ka)| + 2\ep_0,
\end{align*}
by the claim. As $\ep_0$ is arbitrarily small and $\ga_0$ can be chosen to be close to $\ga$, we can ensure that $0<\ga_0-2\ep_0 = \hat\ga$. Hence, there exists sequence $x_\ka, y_\ka$ such that $|\hat u(x_\ka) - \hat u(y_\ka)|\geq\hat\ga>0$. As $\ka\to\infty$, $x_\ka, y_\ka\to p$. Therefore, $\lim_{y \in\XbP, y\to p}\hat u(y)$ does not exist, so $|\hat U_{p}|>0$. Hence, $\hat U$ is not the graph of a function by Theorem \ref{dichotomy thm}.

It remains to prove the claim \eqref{claim open}. Let $\de>0$ be sufficiently small that Lemma \ref{aux lem bad set} holds. Again, let $G=X\setminus B_\de(P)$.

Using the notation of Theorem \ref{existence and uniqueness of la1}, let $j$ be the number of visits of the orbit segment $x,\dots, T^nx$ to $G$, denoting the $j^{th}$ visit by $T^{n_j}x$. As before, if $x\notin G$, let $n_{-1}=0$ so that $n_0$ is the first visit of the orbit of $x$ to $G$. Write $H_{T^{n_{j-1}}x} = h^{n_j-n_{j-1}}_{T^{n_{j-1}}x}$. By Theorem \ref{existence and uniqueness of la1} and passing to the subsequence $n_j$, for any $t\in\RR$,
\[
|\hat u(x)-u(x)|=\left|\lim_{j\to\infty}\hat H^{j}_x(t)-\lim_{j\to\infty} H^{j}_x(t)\right| = \lim_{j\to\infty}|\hat H^{j}_x(t)- H^{j}_x(t)|.
\]
As the limits are independent of $t$, we choose $|t|\leq \|u\|_\infty$. By \eqref{fast contracting},
\begin{align}\label{left to bound h}
\nonumber |\hat H^{j}_x(t)- H^{j}_x(t)|&\leq\sum_{i=0}^{j-1}|\hat H^{i}_x\hat H_{T^{n_i}x}H^{j-i}_{T^{n_{i+1}}x}(t)-\hat H^{i}_xH_{T^{n_i}x}H^{j-i}_{T^{n_{i+1}}x}(t)|\\
&\leq \sum_{i=0}^{j-1}C_{\mathcal{S}}\la_\de^i|\hat H_{T^{n_i}x}H^{j-i}_{T^{n_{i+1}}x}(t)-H_{T^{n_i}x}H^{j-i}_{T^{n_{i+1}}x}(t)|
\end{align}
For notation, fix $s_{i,j}=H^{j-i}_{T^{n_{i+1}}x}(t)$, $y=T^{n_i}x$ and $m=n_{i+1}-n_i$. Then bound
\begin{align}\label{bounding h first steps}
\nonumber |\hat H_y(s_{i,j})-H_y(s_{i,j})| &= |\hat h^{m}_y(s_{i,j})-h^{m}_y(s_{i,j})|\\
\nonumber &\leq \sum_{k=0}^{m-1}|\hat h^{k}_y\hat h_{T^k y}h^{m-1-k}_{T^{k+1}y}(s_{i,j})-\hat h^{k}_yh_{T^k y}h^{m-1-k}_{T^{k+1}y}(s_{i,j})|\\
&\leq C_{\mathcal{S}}\sum_{k=0}^{m-1}|\hat h_{T^k y} h^{m-1-k}_{T^{k+1}y}(s_{i,j})-h_{T^k y}h^{m-1-k}_{T^{k+1}y}(s_{i,j})|.
\end{align}
As $T^k y\in B_\ep(P)$ for all $k$ such that $0\leq k\leq m-1$, by
Lemma \ref{proving f bounded on bad orbits}, letting $r_k(s_{i,j}) =
h^{m-1-k}_{T^{k+1}y}(s_{i,j})$ we see that $|r_k-s_{i,j}|$ is bounded
independently of $k$ and $m$. To show that $s_{i,j}$ is bounded
independently of $i$ and $j$ we consider $z=T^{n_{i+1}}x$, $M=j-i$. By
Lemma \ref{proving f bounded on bad orbits} we have
\begin{align}\label{bound hm}
\nonumber|H^{M}_z(t)-t|&\leq \sum_{\ell=0}^{M-1}|H^{\ell}_zH_{T^{n_i}z}H^{M-\ell-1}_{T^{n_{\ell+1}}p}(t)-H^{\ell}_zH_{T^{n_{\ell}}p}H^{M-\ell-1}_{T^{n_{\ell+1}}p}(t)|\\
&\leq \sum_{\ell=0}^{M-1}\la_\de^\ell|H_{T^{n_i}z}(t)-t|\leq \sum_{\ell=0}^{M-1}\la_\de^\ell A_h(t)\leq C_0(t),
\end{align}
recalling that $H_{p}$ is the identity as $p$ is a fixed point. Therefore, we have $|s_{i,j}-t|=|H^{j-i}_{T^{n_{i+1}}x}(t)-t|\leq C_0(t)$ bounded independently of $i$, $j$ and $n$ and so $|s_{i,j}|\leq C_0(t)+|t|$. By choice of $t$, $|t|\leq \|u\|_\infty$. Hence we can find a compact set $W\subset \RR$ containing each $s_{i,j}$, $r_k(s_{i,j})$ and $t$ which depends only on the skew product.

For fixed $t\in\RR$, the maps $x\mapsto h_x(t)$, $x\mapsto\hat h_x(t)$ are $\al$-H\"{o}lder and both $g_{p}$ and $\hat g_{p}$ are the identity map. Hence,
\begin{align}\label{define open perturb}
|\hat h_x(t)- h_x(t)| &= |\hat h_x(t) -h_x(t) + \hat h_{p}(t) - h_{p}(t)| \leq C_{(h-\hat h)}(t)d(x,p)^{\al}.
\end{align}
For any $t\in W$ where $W$ is compact and fixed, if the perturbation is sufficiently small in the $C^{\al}$ topology, we can ensure that $C_{(h-\hat h)}(W) = \sup_{t\in W} \left(C_{(h-\hat h)}(t)\right)$ is small. As $r_k(s_{i,j})\in W$,
\begin{equation*}
|\hat h_{T^k y}(r_k)-h_{T^k y}(r_k)|\leq C_{(h-\hat h)}(W)d(T^k y, p)^{\al}.
\end{equation*}
As $T^k y\in B_\ep(P)$ for all $0\leq k \leq m-1$, by Lemma \ref{aux lem bad set}, $d(T^k y,p)\leq \ep\theta^{m-k}$. So we can bound
\begin{align}\label{sum bound g}
\sum_{k=0}^{m-1}|\hat h_{T^k y} (r_k)-h_{T^k y}(r_k)|&\leq \sum_{k=0}^{m-1}C_{(h-\hat h)}(W)\ep\theta^{\al(m-k)}\leq C_1C_{(h-\hat h)}(W).
\end{align}
for some $C_1>0$. Substituting \eqref{sum bound g} into \eqref{bounding h first steps} and then \eqref{left to bound h}, we have
\begin{align*}
|\hat u(x) - u(x)|&\leq \sum_{i=0}^\infty C_{\mathcal{S}}\la_\de^i C_2C_{(h-\hat h)}(W).
\end{align*}
for $C_2>0$. As $\la_\de<1$, for a sufficiently small $C^{\al}$ perturbation of $g$ within $C^\al_P$, we can ensure that $|\hat u(x)- u(x)|<\ep_0$, thus completing the proof of the claim.
\end{proof}
\end{Lem}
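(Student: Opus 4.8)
The plan is to reduce everything to a single quantitative statement: the map $g\mapsto u$, sending a skew product in $\mathcal{S}(X,\RR,C^\al_P)$ to its invariant graph on $\XbP$, is continuous from the $C^\al$ topology on $C^\al_P$ to the sup norm on $\XbP$. Granting this, the lemma follows quickly. Fix $p\in P$ and set $\ga:=|U_p|$, which is positive by Theorem~\ref{dichotomy thm} (case (2)), and fix any $\ga_0<\ga$. Since $U$ is not the graph of a continuous function at $p$, Definition~\ref{quasi def} furnishes sequences $x_\ka,y_\ka\in\XbP$ with $x_\ka,y_\ka\to p$ and $|u(x_\ka)-u(y_\ka)|\geq\ga_0$ for all large $\ka$. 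If $\hat g\in C^\al_P$ is a perturbation whose invariant graph $\hat u$ on $\XbP$ (which exists by Theorem~\ref{existence and uniqueness of la1} applied to $\hat g$) satisfies $\sup_{x\in\XbP}|u(x)-\hat u(x)|<\ep_0$, then $|\hat u(x_\ka)-\hat u(y_\ka)|\geq\ga_0-2\ep_0$; taking $\ep_0$ small enough that $\ga_0-2\ep_0>0$ forces $\limsup_{y\in\XbP,\,y\to p}\hat u(y)-\liminf_{y\in\XbP,\,y\to p}\hat u(y)\geq\ga_0-2\ep_0>0$, i.e.\ $|\hat U_p|>0$, so $\hat U$ is not the graph of a function by Theorem~\ref{dichotomy thm}. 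Since the condition $\sup_{\XbP}|u-\hat u|<\ep_0$ will be shown to hold on a whole $C^\al$-neighbourhood of $g$, that neighbourhood is the required $C^\al$-open set.

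To prove the closeness estimate I would mimic the block decomposition from the proof of Theorem~\ref{existence and uniqueness of la1}. One has $u(x)=\lim_j H^j_x(t)$ and $\hat u(x)=\lim_j\hat H^j_x(t)$, where $H_{T^{n_{i-1}}x}=h^{n_i-n_{i-1}}_{T^{n_{i-1}}x}$ is the return map between consecutive visits of the orbit of $x$ to $G=X\setminus B_\de(P)$ (and $\hat H$ is the analogue for $\hat h$), and both limits are independent of $t$ by \eqref{fast contracting}; so I fix $|t|\leq\|u\|_\infty$ and estimate $|\hat H^j_x(t)-H^j_x(t)|$. Telescoping over the $j$ blocks and using \eqref{fast contracting} to absorb the $i$ already-applied $\hat H$-blocks, each of which contains a $G$-visit, yields a factor $\la_\de^i$ at stage $i$; telescoping within a single block over its elementary steps and using \eqref{non expanding} reduces matters to bounding $|\hat h_{T^k y}(r)-h_{T^k y}(r)|$, where $y$ begins a block lying entirely in $B_\ep(P)$ and $r$ is an intermediate iterate. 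The crucial point is that $g_p$ and $\hat g_p$ are both the identity — this is exactly why the perturbation must be taken within $C^\al_P$ — so $h_p=\hat h_p=\Id$ and hence
\[
|\hat h_x(t)-h_x(t)|=\bigl|(\hat h_x(t)-h_x(t))-(\hat h_p(t)-h_p(t))\bigr|\leq C_{(h-\hat h)}(t)\,d(x,p)^\al .
\]
By Lemma~\ref{aux lem bad set}, $d(T^k y,p)\leq\ep\theta^{m-k}$ along a block of length $m$, so the inner sum $\sum_k C_{(h-\hat h)}(W)\,\ep\,\theta^{\al(m-k)}$ is a convergent geometric series in $m$, and the outer sum $\sum_i\la_\de^i$ converges; the resulting bound is $\sup_{\XbP}|u-\hat u|\leq C\,C_{(h-\hat h)}(W)$, where $C$ depends only on the unperturbed skew product, $W$ is a fixed compact subset of $\RR$, and $C_{(h-\hat h)}(W)$ is the supremum over $t\in W$ of the $\al$-\Holder modulus of $\hat h-h$. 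Thus a sufficiently small $C^\al$-perturbation makes the bound $<\ep_0$.

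Two loose ends remain. First I must produce the fixed compact $W$: applying Lemma~\ref{proving f bounded on bad orbits} repeatedly (as in \eqref{existence bound}) gives $|H^M_z(t)-t|\leq C_0(t)$ for all $M$ and $z$ — recall $H_p=\Id$ since $p$ is a fixed point — so every intermediate iterate appearing above stays within a bounded distance of $t$, and $|t|\leq\|u\|_\infty$, so $W$ may be chosen depending only on the skew product. Second I must observe that a $C^\al$-small perturbation within $C^\al_P$ still lies in $\mathcal{S}(X,\RR,C^\al_P)$ with the \emph{same} constant $C_\mathcal{S}$, so Theorem~\ref{existence and uniqueness of la1} applies to $\hat g$ and the block constants above are uniform over the perturbation; this is precisely where fixing $C_\mathcal{S}$ in advance (cf.\ Remark~\ref{examples and constant}) is used. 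The main obstacle is the inner block sum: a block sitting near $P$ carries no contraction and can be arbitrarily long, so a naive bound $\sum_{k=0}^{m-1}\|\hat h-h\|_{C^0}=m\|\hat h-h\|_{C^0}$ diverges as $m\to\infty$; the remedy — and the heart of the proof — is to replace $\|\hat h-h\|_{C^0}$ by the \Holder modulus of $\hat h-h$ times $d(T^k y,p)^\al$ and to invoke the geometric rate of approach to $P$ from Lemma~\ref{aux lem bad set}, which is available only because $g$ and $\hat g$ coincide (they both equal the identity) on $P$.
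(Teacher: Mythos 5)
Your proposal is correct and follows essentially the same route as the paper's proof: the same reduction via the triangle inequality to a sup-norm closeness estimate $|u-\hat u|<\ep_0$ on $\XbP$, the same block decomposition into returns to $G$ with the $\la_\de^i$ factors from \eqref{fast contracting}, the same key use of $h_p=\hat h_p=\Id$ to trade the $C^0$ difference for the \Holder modulus times $d(T^ky,p)^\al$, and the same appeals to Lemmas \ref{aux lem bad set} and \ref{proving f bounded on bad orbits} to sum the geometric series and fix the compact set $W$. Nothing essential is missing.
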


\begin{proof}[Proof of Theorem \ref{open dense f}.]
By combining Lemma \ref{open perturb f} and Lemma \ref{dense perturb f}, we have a $C^\al$-open and $C^{0}$-dense set of $g\in C^\al_P$ such that the invariant quasi-graph $U$ of $\skewT$ is not the graph of a function.
\end{proof}

\section{Proof of Theorem \ref{dichotomy thm strong}}\label{expanding endo}
A special case of a piecewise $C^1$ Markov map of the circle $X$ is an
orientation preserving $C^{1+\al}$ circle endomorphism for $\al>
0$. Notice that each inverse branch is full and choose the end-points
of each $X_j$ in the Markov partition to be given by $t_j$ such that
$T(t_j)=0$. Let $r\in\NN$, $r\geq 1$ and $\al>0$. Denote the set of
skew products with $T\in C^{r+\al}$ and $g\in C^{r+\al}_P$ by
$\skewT\in \mathcal{S}_{C^{r+\al}}(X,\RR,C^{r+\al}_P).$ Throughout
this section we will refer to the base $X$ as the `horizontal' and the
fibre $\RR$ as the `vertical' directions, respectively.  To prove
Theorem \ref{dichotomy thm strong}, we use stable manifold theory (cf.\
\cite{HPS, Fathi}).

\subsection{The local stable manifold}
Let $r\in\NN$, $r\geq 1$ and $\al> 0$. Let $\skewT\in \mathcal{S}_{C^{r+\al}}(X,\RR,C^{r+\al}_P)$. Let $(p,u(p))$ with $p\in P$ a fixed point of $\skewT$. Then, there exists an inverse branch $\tilde\w(x,s)=(\w x, h_{\w x}(s))$ also fixing $(p,u(p))$ with derivative
\begin{align*}
D\tilde\w(p,u(p)) &= \begin{pmatrix}
\w'(p) &0\\
\del (h_{\w p}(u(p)))\w'(p) &\del (h_{\w p}(u(p)))
\end{pmatrix}
=
\begin{pmatrix}
\w'(p)& 0\\
\w'(p)& 1
\end{pmatrix}.
\end{align*}
Let $E$ be the tangent space of $X\times\RR$ at $(p,u(p))$ and let $E_1, E_2$ be the eigenspaces corresponding to the eigenvalues $\w'(p)$ and $1$ respectively. We can think of $E_2$ as consisting of vertical vectors.

Let $\ga_1, \ga_2$ be the linear maps in the direction of the eigenvalues $\w'(p)$ and $1$ of the derivative of $\tilde\w$ at the fixed point $(p,u(p))$. Let $\ga:E\to E$ be such that $\ga|_{E_i}=\ga_i$. Clearly $\ga_i(E_i)=E_i$. Let $\theta<\chi<1$ where $\theta^{-1}=\inf_{x\in X}|T'(x)|$. We state the following theorem, which is true in a much wider setting.

\begin{Thm*}\cite[Theorem A.6]{Fathi}
Let $\xi:E\to E$ be a $C^{r+\al}$ map with fixed point at the origin such that the Lipschitz constant $\textnormal{Lip}(\ga - \xi)<\min\{\chi-\|\ga_1\|,\|\ga_2^{-1}\|^{-1}-\chi\}$.
The set
\[
W^{s,\chi}_\xi = \left\{x\in E \mid \sup_{n\geq 0}\|\chi^{-n}\xi^n(x)\|<\infty\right\}
\]
is the graph of a $C^{r+\al}$ function $\zeta:E_1\to E_2$. Moreover, if $x\in W_\xi^{s,\chi}$, then $\chi^{-n}\xi^n(x)\to 0$ as $n\to\infty$. Furthermore, if the differential $D\xi(0)=\ga$, then $W^{s,\chi}_\xi$ is tangent to $E_1$.
\end{Thm*}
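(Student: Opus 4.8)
The plan is to prove the quoted theorem by the Lyapunov--Perron method, realising $W^{s,\chi}_\xi$ as the set of initial points of $\xi$-orbits that decay at the weighted rate $\chi$, and producing $\zeta$ as the solution of a parametrised fixed-point problem. Write $\xi = \ga + \phi$, so $\phi(0) = 0$ and $\eta := \Lip(\phi) = \Lip(\ga-\xi) < \min\{\chi - \|\ga_1\|,\ \|\ga_2^{-1}\|^{-1} - \chi\}$. A point $x\in E$ lies in $W^{s,\chi}_\xi$ precisely when the orbit $x_n := \xi^n(x)$ satisfies $\sup_n \chi^{-n}\|x_n\| < \infty$, so the object of study is orbits of the recursion $x_{n+1} = \xi(x_n)$ lying in the Banach space $\Sigma_\chi := \{(x_n)_{n\ge0} : \|\mathbf x\|_\chi := \sup_n\chi^{-n}\|x_n\| < \infty\}$, and more precisely in its closed subspace $\Sigma^0_\chi$ of sequences with $\chi^{-n}x_n\to 0$. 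Splitting each coordinate along $E = E_1\oplus E_2$ and solving the recursion by variation of constants---forward in $E_1$, which converges since $\|\ga_1\| < \chi$, and by a convergent tail-sum in $E_2$, which converges since $\chi < \|\ga_2^{-1}\|^{-1}$---leads, for each $a\in E_1$, to the operator $\mathcal{L}_a$ on $\Sigma^0_\chi$ whose $n$-th term has $E_1$-component $\ga_1^n a + \sum_{k=0}^{n-1}\ga_1^{\,n-1-k}\phi^1(x_k)$ and $E_2$-component $-\sum_{k=n}^{\infty}\ga_2^{\,n-1-k}\phi^2(x_k)$.

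First I would check that $\mathcal{L}_a$ maps $\Sigma^0_\chi$ into itself and is Lipschitz with constant $\eta\,/\min\{\chi - \|\ga_1\|,\ \|\ga_2^{-1}\|^{-1} - \chi\} < 1$; this is a direct geometric-series estimate, and it is precisely here that the hypothesis on $\Lip(\ga-\xi)$ enters. The contraction mapping theorem gives a unique fixed point $\mathbf x(a) \in \Sigma^0_\chi$, and one verifies that $\mathbf x(a)$ is an orbit of $\xi$ with $E_1$-component of its initial point equal to $a$; I then set $\zeta(a)$ to be the $E_2$-component of $x_0(a)$, so $\mathrm{graph}(\zeta) = \{\,x_0(a) : a\in E_1\,\}$. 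To identify this with $W^{s,\chi}_\xi$, one inclusion is immediate since $\mathbf x(a)\in\Sigma^0_\chi\subset\Sigma_\chi$; conversely, if an orbit lies in $\Sigma_\chi$ then the variation-of-constants representation forces the homogeneous $E_2$-mode $\ga_2^n c$ to vanish (because $\|c\| \le C(\|\ga_2^{-1}\|\chi)^n\|\mathbf x\|_\chi \to 0$), so the orbit is a fixed point of $\mathcal{L}_a$ with $a$ its initial $E_1$-component, hence coincides with $\mathbf x(a)$ by uniqueness and starts on $\mathrm{graph}(\zeta)$. Because $\mathbf x(a)\in\Sigma^0_\chi$, this also delivers the assertion $\chi^{-n}\xi^n(x)\to 0$ for $x\in W^{s,\chi}_\xi$.

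It remains to prove that $\zeta$ is $C^{r+\al}$ and, under $D\xi(0)=\ga$, tangent to $E_1$. The map $a\mapsto\mathbf x(a)$ is Lipschitz because $\mathcal{L}_a$ depends on $a$ only through the bounded linear term $(\ga_1^n a)_n$ while contracting uniformly in $a$. For higher regularity I would induct on the order: formally differentiating the identity $\mathbf x(a) = \mathcal{L}_a(\mathbf x(a))$ yields, for the candidate $j$-th derivative, a further fixed-point equation whose linear part replaces $\phi^i$ by $D\phi^i(x_k)\,\cdot$ along the orbit; this linear operator still has norm $\le \eta\,/\min\{\cdots\} < 1$ since $\|D\phi\|\le\eta$, so it has a unique solution, continuous in $a$, and the fibre contraction theorem (cf.\ \cite{HPS}) identifies it with the genuine $j$-th derivative. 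Running this through orders $0,1,\dots,r$ gives $\mathbf x\in C^r$, hence $\zeta\in C^r$; at the top order one promotes continuity of $D^r\mathbf x$ to $\al$-H\"older continuity using that $D^r\phi$ is locally $\al$-H\"older (as $\xi\in C^{r+\al}$) together with a Neumann-series bound on the H\"older modulus, giving $\zeta\in C^{r+\al}$. Finally, if $D\xi(0)=\ga$ then $D\phi(0)=0$ and $\mathbf x(0)=0$, so the linearised equation at $a=0$ annihilates every $\phi$-term and gives $Dx_0(0)=(\mathrm{id}_{E_1},0)$, i.e.\ $D\zeta(0)=0$, so $\mathrm{graph}(\zeta)$ is tangent to $E_1$ at the origin.

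The step I expect to be the main obstacle is the $C^{r+\al}$ regularity. The Perron operator contracts only in the weak norm $\|\cdot\|_\chi$ and is merely bounded---never contracting---in the strong $C^r$- and $C^{r+\al}$-type norms on sequences, so the contraction principle cannot be applied directly in the smooth category; overcoming this by showing that a closed ball of the strong space is invariant and invoking the fibre contraction theorem, and then handling the H\"older endpoint by an interpolation/Neumann-series estimate, is the technical heart of the proof. A related subtlety, and the reason for working in $\Sigma^0_\chi$ rather than $\Sigma_\chi$ throughout, is that the superposition operator $\mathbf x\mapsto(\phi(x_n))_n$ is $C^{r+\al}$ on $\Sigma^0_\chi$ but not on $\Sigma_\chi$: the decay $x_n\to 0$ is what confines all but finitely many coordinates to a neighbourhood of the origin where the derivatives of $\phi$ are controlled. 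One must also keep every constant uniform in the parameter $a\in E_1$, so that $\zeta$ itself---and not merely each individual orbit---inherits the regularity.
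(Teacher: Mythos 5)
This statement is not proved in the paper at all: it is quoted verbatim from Fathi--Herman--Yoccoz \cite{Fathi} and used as a black box in the proof of Theorem \ref{dichotomy thm strong}, so there is no internal argument to compare yours against. Your Lyapunov--Perron sketch is the standard proof of such a statement and is close in spirit to the sequence-space method of the cited source: the contraction constant $\eta/\min\{\chi-\|\gamma_1\|,\|\gamma_2^{-1}\|^{-1}-\chi\}<1$ is exactly where the Lipschitz hypothesis is used, the identification of $W^{s,\chi}_\xi$ with the graph by solving the $E_2$-component backwards is correct, the decay assertion follows from working in $\Sigma^0_\chi$, and the tangency claim follows as you say from $D\phi(0)=0$ together with $\mathbf{x}(0)=0$.

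Three caveats, none fatal. First, for the converse inclusion you compare an orbit known only to lie in $\Sigma_\chi$ with a fixed point produced in $\Sigma^0_\chi$ and appeal to uniqueness; the repair is immediate but should be said: the same geometric-series estimate shows $\mathcal{L}_a$ is a contraction on all of $\Sigma_\chi$, so the fixed point is unique there and automatically agrees with the one in $\Sigma^0_\chi$. Second, your remark that membership of $\Sigma^0_\chi$ confines all but finitely many coordinates to a neighbourhood of the origin tacitly assumes $\chi\leq 1$; this is harmless for the application in the paper (where $\theta<\chi<1$), but it is not part of the hypotheses as stated, and in general the $C^{r+\alpha}$ smoothness of the superposition operator $\mathbf{x}\mapsto(\phi(x_n))_n$ requires uniform boundedness and H\"older continuity of the derivatives of $\phi$ up to order $r$. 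Third, the $C^{r+\alpha}$ regularity via the fibre contraction theorem and the H\"older endpoint estimate is only sketched; as you yourself flag, that is the technical heart of the theorem, and a complete write-up would need to construct the strong sequence spaces, verify invariance of a suitable ball, and run the fibre contraction argument order by order with constants uniform in $a\in E_1$.
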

Let $\ep>0$ be small and let $N_0\subset E$ be a neighbourhood of the origin with diameter at most $\ep$. Let $\text{Exp}:N_0\to \RR^2$ be the smooth exponential map from a subset of $E$ to $X\times\RR$ with $\text{Exp}(0,0) = (p,u(p))$. Let $\xi=\text{Exp}^{-1}(\tilde\w)$. Let $\ep$ be sufficiently small that the restriction of $\xi-\ga$ to $N_0$ satisfies the conditions of \cite[Theorem A.6]{Fathi}.

Applying the exponential map to $W^{s,\chi}_\xi$ we obtain a $C^{r+\al}$ immersed manifold $\tilde W^{s,\chi}_\ep\subset X \times\RR$ contained in a ball of radius $\ep$ centred at $(p,u(p))$, say $B_\ep(p,u(p))$, such that for $(x,s)\in \tilde W^{s,\chi}_\ep$ we have
\begin{equation}\label{elements of tildeW}
\chi^{-n}d(\tilde\w^n(x,s),(p,u(p)) \to 0,
\end{equation}
as $n\to\infty$. We call $\tilde W_{\ep}^{s,\chi}$ the local stable manifold through the fixed point.

\subsection{Proof of Theorem \ref{dichotomy thm strong}}
We assume that $U$ is the graph of a continuous function. By Theorem \ref{dichotomy thm}, $U$ is Lipschitz continuous. We extend the stable manifold such that it is both $C^{r+\al}$ and equal to the Lipschitz continuous quasi-graph $U$.

We show that $\skewT(\tilde W^{s,\chi}_\ep)\supset\tilde W^{s,\chi}_\ep$. Consider, $\skewT(\tilde W^{s,\chi}_\ep)=\{(y,t) = \skewT(x,s)\mid (x,s)\in \tilde W^{s,\chi}_\ep\}$. As $\skewT$ is $C^{r+\al}$, this is a $C^{r+\al}$ immersed manifold. Suppose that $(y,t)\in\tilde W^{s,\chi}_\ep$. Let $(x,s)=\tilde\w(y,t)$. Let $B_\ep(p,u(p))$ be a small ball about the fixed point such that $\tilde W^{s,\chi}_\ep\subset B_\ep(p,u(p))$. As $(y,t)\in B_\ep(p,u(p))$ and $\tilde\w$ contracts, we have that $(x,s)\in B_\ep(p,u(p))$. To see that $(x,s)$ is in the local stable manifold, consider the lift to the tangent space $\text{Exp}^{-1}(x,s) = v_{(x,s)}$ and $\text{Exp}^{-1}(y,t) = v_{(y,t)}$. Notice that
\[
\|\chi^{-n}\xi^n(v_{(x,s)})\|=\|\chi^{-n}\xi^{n+1}(v_{(y,t)})\|=\chi\|\chi^{-(n+1)}\xi^{n+1}(v_{(y,t)})\|\to 0,
\]
as $v_{(y,t)}\in W^{s,\chi}_\ep$. Therefore, $v_{(x,s)}\in W^{s,\chi}_\ep$. Applying the exponential map, we see that $(x,s)\in \tilde W^{s,\chi}_\ep$. Therefore, $(y,t)\in \skewT(\tilde W^{s,\chi}_\ep)$, so $\tilde W^{s,\chi}_\ep\subset\skewT(\tilde W^{s,\chi}_\ep)$.

Let $N$ be the least integer for which there exists a cylinder $C_N\subset [0,1]$ of rank $N$ with $C_N\subset B_\ep(p)$. Then, $T^N(C_N) = [0,1]$. Define $\tilde W^{s,\chi}_{C_N}$, the restriction of $\tilde W^{s,\chi}_\ep$ to $C_N$. Furthermore, as $E_1$ is transverse to $E_2$, the immersed manifold $\tilde W^{s,\chi}_{C_N}$ is the graph of a function $X\to \RR$. By applying the skew product iteratively we can define a surjective $C^{r+\al}$ function $[0,1]\to \RR$ by $\tilde W=\skewT^N(\tilde W^{s,\chi}_{C_N})$, where $\tilde\w(\tilde W)\subset \tilde W$. We, finally, prove the strong dichotomy.

\begin{proof}[Proof of Theorem \ref{dichotomy thm strong}.]
It suffices to show that $U\subset \tilde W$ are both are continuous
graphs over $[0,1]$, then check continuity at $1\cong 0$. Suppose
$(x,u(x))\in U$ but $(x,u(x))\not\in \tilde W$. If $(x,u(x))\in U$
then $\tilde\w(x,u(x)) = (\w x, u(\w x))\in U$, similarly
$\tilde\w(\tilde W)\subset \tilde W$. By applying the pre-image
$\tilde\w$ repeatedly, it suffices to assume $x\in C_N$ with
$(x,u(x))\not\in\tilde W^{s,\chi}_{C_N}$. By definition, if
$(x,u(x))\not\in \tilde W^{s,\chi}_{C_N}$, there exists subsequence
$n_m$ such that
$\chi^{-{n_m}}d(\tilde\w^{n_m}(x,u(x)),(p,u(p)))\to\infty$. We know
that $\theta^{-n_m}>\chi^{-n_m}$ and
$d(\w^{n_m}x,p)^{-1}\geq\theta^{-n_m}$. Therefore,
\begin{equation}\label{cr lipsch contra}
\frac{d((\w^{n_m}x,u(\w^{n_m}x)),(p,u(p)))}{d(\w^{n_m}x, p)}\to\infty,
\end{equation}
contradicting that $u$ is Lipschitz continuous. Therefore, $U=\tilde W$; so, $U$ is the graph of a $C^{r+\al}$ function on $[0,1]$. To show that $\tilde W$ is $C^{1+\al}$ at $0$, let $V$ be an open ball about a pre-image of $0$ not containing $0$; as $T$ has $b\geq 2$ branches, such a set exists. We know that $\tilde W$ is $C^{r+\al}$ on $V$. As $\skewT$ is $C^{r+\al}$ and $\tilde W$ is $\skewT$ invariant, it follows that $U=\tilde W$ is $C^{r+\al}$ at $0$ as required.
\end{proof}

\section{The box dimension of quasi-graphs}
In this section, we calculate the box dimension of the quasi-graphs of partially hyperbolic skew products $\skewT\in \mathcal{S}(X,\RR, C^{\Lip,2}_P)$. Throughout, we will use the notation $U_{C_n}$ to denote the restriction of the skew product to the cylinder $C_n$, thus $|U_{C_n}|$ is the height of the quasi-graph over this cylinder and $C_n\times U_{C_n} \subset X\times\RR$ is the smallest rectangle containing every element of $U$ over the cylinder $C_n$. By Proposition \ref{fixed pts ok} and Remark \ref{g orient pres}, we can assume that $P$ consists of fixed points, $T$ is full branched and $\del g_x (t)>0.$ The main difficulty is in establishing various bounded distortion estimates; once we obtain those, the argument follows that in \cite[Theorem 13.1]{Pesin}.

As we only assume $T$ is a Markov map, we abuse notation slightly by defining $\|T'\|_\infty$ and $\m(|T'|)$ as the supremum and infimum of $|T'(x)|$ for $x\in\bigcup_{i=0}^{b-1}\text{Int}(X_j)$ respectively. Note that $\m|T'| = \theta^{-1}$. Also, we notice that conditions \eqref{non expanding}, \eqref{slow contracting} and \eqref{fast contracting} can be written as bounds on the derivative of $h^n_x(t)$, see Remark \ref{examples and constant}. In particular, if the orbit segment $x,\dots, T^{n-1}x$ visits $X\backslash B_\de(P)$ $j$-many times, there exists $0<\la_\de<1$ such that, for all $(x,s)\in X\times\RR$,
\begin{equation}\label{fast contr diff}
|\del(h^n_x(s))|\leq C_\mathcal{S}\la_\de^j.
\end{equation}

\subsection{Bounds on the height of cylinders}\label{bounds on height}
To calculate the box dimension of graphs it is important to estimate the height of the graph over a cylinder. We omit details of the argument for the lower bound as the argument follows the uniformly expanding version which can be found as \cite[Proposition 3.1]{MossWalkdenDim} and \cite[Proposition 8]{Bedford}.

\begin{Lem}\label{upper sup bound on height}
Let $\skewT\in \mathcal{S}(X,\RR, C^{\Lip,2}_P)$ be partially hyperbolic. There exists constant $A>0$ independent of $C_n$ such that
\[
\Height(U_{C_n})\leq A\sup_{(x,s)\in C_n\times U_{C_n}}\prod_{j=0}^{n-1}\del h_{T^jx}(g^{j+1}_x(s)).
\]
\end{Lem}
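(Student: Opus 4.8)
The plan is to bound the height of the quasi-graph over a cylinder $C_n = C_{j_0\dots j_{n-1}}$ by pulling back along the inverse branch $\w = \w_{j_0}\cdots\w_{j_{n-1}}$ associated to $C_n$. First I would write $C_n = \w(X)$ and observe that any two points of $U$ lying above $C_n$ are of the form $(x,u(x))$ and $(y,u(y))$ with $x,y\in C_n$ (or, for the finitely many points in $R_P\cap C_n$, endpoints of the vertical intervals $U_x$; these are handled by the $\limsup/\liminf$ description and a continuity/approximation argument, so it suffices to bound $|u(x)-u(y)|$ for $x,y\in C_n\cap\XbP$). Writing $x = \w(x')$, $y = \w(y')$ with $x',y'\in X$, the invariance relation \eqref{invariant graph eq} iterated $n$ times gives $u(x) = h^n_{\w x'}\!\big(u(T^n\w x')\big) = h^n_{x}\big(u(x')\big)$ and similarly for $y$; here I use that $T^n\w = \Id$ on $X$. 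Then I telescope: $|u(x)-u(y)| = |h^n_x(u(x')) - h^n_y(u(y'))| \le |h^n_x(u(x')) - h^n_x(u(y'))| + |h^n_x(u(y')) - h^n_y(u(y'))|$.

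The first term is the main contribution and the source of the stated product. Applying the mean value theorem to $h^n_x = h_{x}\circ h_{Tx}\circ\cdots\circ h_{T^{n-1}x}$ in the fibre variable, there is an intermediate point $s$ with $|h^n_x(u(x')) - h^n_x(u(y'))| = |\del(h^n_x(s))|\,|u(x')-u(y')|$, and the chain rule expands $\del(h^n_x(s))$ as the product $\prod_{j=0}^{n-1}\del h_{T^jx}(g^{j+1}_x(s'))$ evaluated along the forward orbit, matching the product in the statement once one checks that the fibre-points $g^{j+1}_x(s)$ all lie in $C_n\times U_{C_n}$ (which they do, since $T^jx \in C_{n-j}\subset C_n$-branch-image sits appropriately and the fibre values stay in the vertical extent of $U$ by invariance). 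Since $x',y'\in X$ and $u$ is uniformly bounded (Theorem \ref{existence and uniqueness of la1}), $|u(x')-u(y')| \le 2\|u\|_\infty =: A_0$, giving a bound of $A_0\sup_{C_n\times U_{C_n}}\prod_{j=0}^{n-1}\del h_{T^jx}(g^{j+1}_x(s))$ for the first term.

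For the second term, $|h^n_x(u(y')) - h^n_y(u(y'))|$, I would use the \Holder dependence of $h$ on the base point \eqref{g holder} together with a further telescoping over the $n$ coordinates of the orbit; since $d(T^jx, T^jy) = d(T^j\w x', T^j\w y')$ and $\w$ is built from inverse branches each contracting by $\theta$, one gets $d(T^jx,T^jy)\le \theta^{\,n-j}d(x',y')\le\theta^{\,n-j}$, and the geometric series in $\theta^\al$ plus the partial-hyperbolicity contraction bound \eqref{fast contr diff} (or the \Holder continuity of $\del h$ from the $C^{\Lip,2}_P$ hypothesis, converting to a Lipschitz/bounded-distortion estimate) shows this term is comparable to, or dominated by a constant times, the same product $\prod_j \del h_{T^jx}(\cdot)$; this is exactly where the bounded distortion built into $\mathcal{S}(X,\RR,C^{\Lip,2}_P)$ is used, and it is the step I expect to be the main obstacle, since the contraction $\prod_j\del h$ is not uniform near $P$ and one must play the \Holder gain $\theta^{\al(n-j)}$ against the (possibly slow) fibre contraction near the fixed point, using \eqref{slow contracting}–\eqref{fast contracting} to keep the ratio bounded. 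Combining the two estimates and absorbing constants gives $\Height(U_{C_n})\le A\sup_{(x,s)\in C_n\times U_{C_n}}\prod_{j=0}^{n-1}\del h_{T^jx}(g^{j+1}_x(s))$ as claimed.
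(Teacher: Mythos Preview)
Your two–term telescoping is a natural idea, but it is \emph{not} the route the paper takes, and in the present non-uniformly expanding setting it runs into a genuine obstacle that the paper's path-integral argument is designed to avoid.

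The paper follows Bedford and Moss--Walkden: one joins the two extreme points $(x_0,s_0),(x_1,s_1)$ of $U$ over $C_n$ by the straight segment $\hat\gamma\subset C_n\times U_{C_n}$, sets $\gamma=\skewT^n\hat\gamma$, and differentiates $\hat\gamma_V(a)=h^n_{\hat\gamma_H(a)}(\gamma_V(a))$. The point of this device is that every evaluation of $\del_t h^n$ and of the partial products occurring in $\del_x h^n$ is made at a point of the form $(x,s)=(\hat\gamma_H(a),\hat\gamma_V(a))\in C_n\times U_{C_n}$. Hence $\del_t h^n$ is automatically bounded by the supremum in the statement, and in the base-variation term the partial products $\prod_{i<j}\del h_{T^i x}(g^{i+1}_x(s))$ are sub-products of the \emph{same} full product $\prod_{i<n}\del h_{T^i x}(g^{i+1}_x(s))$; the missing tail $\prod_{i\ge j}\del h_{T^i x}(g^{i+1}_x(s))\cdot|(T^{n-j})'(T^jx)|\ge\kappa^{n-j}$ is then controlled by partial hyperbolicity alone, yielding a convergent geometric series with no fibre bounded distortion needed. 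What remains is to bound the horizontal and vertical variation of $\gamma$, which is the content of Remark~\ref{non vertical obs}.

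Your decomposition loses exactly this alignment. In the first term the mean-value point $\tau$ lies between $u(T^nx)$ and $u(T^ny)$, so the corresponding $s=h^n_x(\tau)$ has no reason to lie in $U_{C_n}$; your assertion that ``the fibre-points $g^{j+1}_x(s)$ all lie in $C_n\times U_{C_n}$'' is not justified, and without it the supremum in the statement does not apply. In the second term your telescoping produces partial products $\del h^k_x(\xi_k)$ evaluated at intermediate fibre points $\xi_k$ that vary with $k$; comparing these to the full product at a fixed $(x,s)\in C_n\times U_{C_n}$ is precisely a vertical bounded-distortion estimate of the type in Lemma~\ref{bounded distortion vertical}. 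In the paper's logical order that lemma is established only \emph{after} Lemma~\ref{upper sup bound on height}, via Proposition~\ref{bound on height} $\to$ Proposition~\ref{u connected} $\to$ Proposition~\ref{every vertical point attained} $\to$ Lemma~\ref{itrt forw t bdd}, so invoking it here would be circular. You correctly flag this step as ``the main obstacle''; the resolution is not to push harder on the telescoping but to switch to the path-integral formulation, which sidesteps the need for vertical bounded distortion at this stage.
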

\begin{proof}
See \cite[Proposition 3.1]{MossWalkdenDim} for the affine case or \cite[Proposition 8]{Bedford} for $g_x$ a diffeomorphism, and Remark \ref{non vertical obs}.
\end{proof}

\begin{Rem}\label{non vertical obs}
In \cite[Proposition 3.1]{MossWalkdenDim}, a straight line path
$\hat\ga$ joining the supremum and infimum of the height of the
invariant graph $u$ over the cylinder is defined and it is proved that
both the horizontal and vertical path integrals along
$\skewT^n\hat\ga=\ga$ are bounded. In our setting, $\hat\ga$ could be
a vertical line; when the infimum and supremum of the quasi-graph over
$C_n$ happen to both be at the same point of $R_P$.  In this case, as
$U$ is invariant, $\skewT^n\hat\ga=\ga$ is also a vertical line, so
the horizontal derivative is $0$ and the vertical derivative is
bounded by $\int|\ga'_V(a)|\;da\leq 2\|u\|_\infty$, where $\ga_V$ is
the projection of $\ga$ onto the vertical axis. This observation
allows us to continue to use Lemma \ref{upper sup bound on height} in
our setting.
\end{Rem}

The lower bound is actually more straightforward, employing the properties of quasi-graphs.

\begin{Lem}\label{lower bound on height}
Let $\skewT\in \mathcal{S}(X,\RR, C^{\Lip,2}_P)$ be partially hyperbolic. There exists constant $B>1$ independent of $C_n$ such that
\[
\Height(U_{C_n})\geq B^{-1}\inf_{(x,s)\in C_n\times U_{C_n}}\prod_{j=0}^{n-1}\del h_{T^jx}(g^{j+1}_x(s)).
\]
\begin{proof}
As $U$ is not a continuous function of $X$, we have $|U_x|>0$ for
every $x\in R_P$, by Theorem \ref{dichotomy thm}. Let $x\in C_n$ be a
pre-image of $p\in P$ such that $T^nx=p$. Therefore $|U_x|>0$ and
$g^n_x(U_x)=U_p$. Denote the end-points of $|U_x|$ by $t, s$. As $U$
is invariant, the end-points of $U_x$ map to the end-points of
$U_p$. Let $t'=\sup (U_p)$, then we have $h^{n}_x(t') = t$;
similarly let $s'=\inf (U_p)$, so that $h^{n}_x(s') = s$. Hence, as $h^{n}_x$
is a diffeomorphism, the Mean Value Theorem gives:
\begin{align*}
\Height(U_{C_n})\geq |t-s|=|h^{n}_x(t')-h_x^{n}(s')|\geq \inf_{r'\in U_p}|\del (h^{n}_x(r'))||U_p|.
\end{align*}
As $U_p$ is a closed interval the infimum is attained, say at the point $(x,r')$.
As the fibres map bijectively to each other, $r' = g^n_x(r)$ for some $r\in U_x$, so
\begin{align*}
\del (h^{n}_x(r')) = \del(h_{x}\dots h_{T^{n-1}x}(r')) = \prod_{j=0}^{n-1}\del h_{T^jx}(h_{T^{j+1}x}\dots h_{T^{n-1}x}(g^n_x(r))) = \prod_{j=0}^{n-1}\del h_{T^jx}(g^{j+1}_x(r)),
\end{align*}
as required.
\end{proof}
\end{Lem}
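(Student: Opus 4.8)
The plan is to exploit the \emph{bony} structure of the quasi-graph: over any cylinder there is a pre-image of a fixed point at which $U$ already has a vertical gap of definite size, and one then pulls that gap back through $h^n_x$ by the Mean Value Theorem. In contrast with the uniformly expanding case (cf.\ \cite[Proposition 3.1]{MossWalkdenDim}, \cite[Proposition 8]{Bedford}) no quantitative oscillation estimate is required here; the lower bound comes essentially for free.

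First I would invoke the dichotomy: since $U$ is not the graph of a continuous function on $X$ (the setting of Theorem \ref{box dim thm}), Theorem \ref{dichotomy thm} gives $|U_x|>0$ for every $x\in R_P$, so $B_0:=\min_{p\in P}|U_p|>0$ because $P$ is finite. Next, given a rank-$n$ cylinder $C_n=\w_{j_0}\cdots\w_{j_{n-1}}(X)$, fix $p\in P$ (a fixed point, after Proposition \ref{fixed pts ok}) and set $x=\w_{j_0}\cdots\w_{j_{n-1}}(p)\in C_n$, so that $T^nx=p$. By invariance of $U$ (Proposition \ref{quasi-graph inv}), since $\skewT^n$ maps the fibre over $x$ bijectively onto the fibre over $p$, we get $g^n_x(U_x)=U_p$; as $g^n_x$ is an increasing (Remark \ref{g orient pres}) homeomorphism of the fibre, it carries the endpoints of $U_x$ to those of $U_p$. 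Writing $t'=\sup U_p$ and $s'=\inf U_p$, the endpoints of $U_x$ are $h^n_x(t')$ and $h^n_x(s')$, so that $\Height(U_{C_n})\ge |U_x|=|h^n_x(t')-h^n_x(s')|$.

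Since $g\in C^{\Lip,2}_P$, the map $h^n_x$ is a $C^1$ diffeomorphism of $\RR$, so the Mean Value Theorem yields $|h^n_x(t')-h^n_x(s')|\ge\bigl(\inf_{r'\in U_p}|\del h^n_x(r')|\bigr)|U_p|\ge B_0\inf_{r'\in U_p}|\del h^n_x(r')|$. The interval $U_p$ is compact, so this infimum is attained at some $r'\in U_p$, and since $g^n_x\colon U_x\to U_p$ is a bijection, $r'=g^n_x(r)$ for a unique $r\in U_x$. Expanding $\del h^n_x(r')$ by the chain rule and using the identity $h_{T^{j+1}x}\cdots h_{T^{n-1}x}(g^n_x(r))=g^{j+1}_x(r)$ gives $\del h^n_x(r')=\prod_{j=0}^{n-1}\del h_{T^jx}(g^{j+1}_x(r))$. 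Finally $(x,r)\in U$ with $x\in C_n$, hence $r\in U_{C_n}$, so this product is at least $\inf_{(y,s)\in C_n\times U_{C_n}}\prod_{j=0}^{n-1}\del h_{T^jy}(g^{j+1}_y(s))$. Taking $B=\max\{B_0^{-1},2\}$, which is $>1$ and independent of $C_n$, completes the proof.

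The step requiring the most care is the re-indexing of $\del h^n_x(r')$: the product must be evaluated not at $r'$ but at the forward iterates $g^{j+1}_x(r)$ of the fibre point $r\in U_x$ corresponding to the optimising $r'$, and one must verify that this $r$ lies in $U_{C_n}$ so that the bound is exactly the infimum over $C_n\times U_{C_n}$ in the statement. Everything else — the existence of a pre-image of $p$ inside $C_n$, the compactness of $U_p$, and the monotonicity of $g^n_x$ — is immediate from the standing hypotheses and results already established.
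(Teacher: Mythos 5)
Your proposal is correct and follows essentially the same route as the paper: locate a pre-image $x\in C_n$ of a fixed point $p\in P$, use invariance and orientation preservation to send the endpoints of $U_p$ back via $h^n_x$, apply the Mean Value Theorem, and re-index the chain-rule product at the forward iterates $g^{j+1}_x(r)$. Your version merely makes explicit a few points the paper leaves implicit (the uniform constant $B_0=\min_{p\in P}|U_p|$, the construction of $x$ via inverse branches, and the check that $r\in U_{C_n}$), which is a welcome clarification but not a different argument.
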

Combining the two lemmas we have the following result.

\begin{Prop}\label{bound on height}
Let $\skewT\in S(X,\RR, \bar{C}^2_P)$ be partially hyperbolic. Let $C_n$ be a cylinder of rank $n$. There exists a constant $D_1>1$, independent of $C_n$, such that,
\begin{align*}
D_1^{-1}\inf_{(x,s)\in C_n\times U_{C_n}}\prod_{j=0}^{n-1}\del h_{T^jx}(g^{j+1}_x(s))\leq \Height(U_{C_n})\leq D_1\sup_{(x,s)\in C_n\times U_{C_n}}\prod_{j=0}^{n-1}\del h_{T^jx}(g^{j+1}_x(s)).
\end{align*}
\end{Prop}

We need to be able to strengthen this result to show that there is no need to take the infimum and supremum, rather there exists a constant independent of $(x,t)$ such that the bound holds for any $(x,s)\in C_n\times U_{C_n}$. We do this by proving bounded distortion results in the vertical direction.

\subsection{Continuity of the quasi-graph on $\XbP$}
We can use the upper bound on the heights of the quasi-graph over cylinders to prove continuity of $U$ at points in $\XbP$.  The following is a standard fact.

\begin{Prop}\label{bounded distortion}\cite[Lemma 5]{Bedford}, \cite[Proposition 4.2]{Falconer2}
Let $\al>0$. Let $T$ be a $C^{1+\al}$ expanding circle endomorphism. $C_n\subset X$ be  a cylinder of rank $n$. There exists $D_0>0$ such that for any $x\in C_n$,
$D_0^{-1}\left(|T'|^n(x)\right)^{-1}\leq \diam(C_n) \leq D_0\left(|T'|^n(x)\right)^{-1}$.
\end{Prop}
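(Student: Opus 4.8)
The plan is to run the classical bounded distortion argument for the derivative cocycle $|T'|^n(x):=\prod_{j=0}^{n-1}|T'(T^jx)|=|(T^n)'(x)|$, and then convert the resulting two-sided comparison of $|T'|^n$ on $C_n$ into the diameter estimate via the mean value theorem applied to $T^n|_{C_n}$. First I would record two elementary facts. Since $T\in C^{1+\al}$ is expanding, $T'$ is $\al$-H\"older and $|T'|\geq\theta^{-1}>1$, so $\log|T'|$ is $\al$-H\"older on the circle: there is $C_{\log}>0$ with $\bigl|\log|T'(a)|-\log|T'(b)|\bigr|\leq C_{\log}\,d(a,b)^\al$ for all $a,b\in X$. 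Secondly, if $x,y\in C_n$ then, for each $0\leq j\leq n-1$, the points $T^jx$ and $T^jy$ lie in the common rank-$(n-j)$ cylinder obtained by deleting the first $j$ inverse branches defining $C_n$; since each inverse branch $\w_i$ contracts distances by the factor $\theta<1$, this forces $d(T^jx,T^jy)\leq\theta^{\,n-j}\diam(X)\leq\theta^{\,n-j}$.

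Next I would estimate the distortion: for any $x,y\in C_n$, telescoping $\log|T'|^n$ and combining the two facts above gives
\[
\left|\log\frac{|T'|^n(x)}{|T'|^n(y)}\right|\leq\sum_{j=0}^{n-1}\bigl|\log|T'(T^jx)|-\log|T'(T^jy)|\bigr|\leq C_{\log}\sum_{j=0}^{n-1}\theta^{\al(n-j)}\leq\frac{C_{\log}\,\theta^{\al}}{1-\theta^{\al}}=:\log D_0,
\]
so that $D_0^{-1}\leq|T'|^n(x)/|T'|^n(y)\leq D_0$ for all $x,y\in C_n$, with $D_0$ independent of $n$ and of the choice of cylinder; in other words, $|T'|^n$ is comparable to a constant on each $C_n$.

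Finally, since $T$ is a circle endomorphism (full-branched throughout this section), $T^n|_{C_n}$ is a $C^1$ diffeomorphism of the interval $C_n$ onto $X$, an arc of length $1$, so the mean value theorem produces $\xi\in C_n$ with $\diam(C_n)=(|T'|^n(\xi))^{-1}$; feeding in the distortion bound with $y=\xi$ gives, for every $x\in C_n$,
\[
D_0^{-1}\,(|T'|^n(x))^{-1}\leq\diam(C_n)\leq D_0\,(|T'|^n(x))^{-1},
\]
which is the claim. The argument is entirely routine; the one step worth a little care is the geometric decay $d(T^jx,T^jy)\leq\theta^{n-j}$, which comes from reading the orbit of a cylinder point backwards through the contracting inverse branches, together with the boundedness of $\diam(T^n(C_n))$ above and below --- immediate here since $T$ is full-branched, but requiring the locally-eventually-onto hypothesis if one wished to run the same argument for a general Markov map.
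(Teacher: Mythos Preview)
Your argument is correct and is the standard bounded distortion argument. The paper does not supply its own proof of this proposition: it is stated as a ``standard fact'' with citations to \cite[Lemma~5]{Bedford} and \cite[Proposition~4.2]{Falconer2}, and the argument you give is precisely the one found in those references.
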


We now prove that the invariant graph of a partially hyperbolic skew product is continuous on $\XbP$.
\begin{Prop}\label{u connected}
Let $\skewT\in \mathcal{S}(X,\RR, C^{\Lip,2}_P)$. The quasi-graph $U$ is continuous at $x\in\XbP$.
\begin{proof}
Let $x\in \XbP$. Suppose $M\in\NN$. Let $\ep>0$ be as in Lemma \ref{aux lem bad set} and let $\de>0$ be such that $\de<\ep/2$. Let $n$ be the number of visits of the orbit segment $x,\dots, T^{M-1}x$ to $X\setminus B_\ep(P)$. By Corollary \ref{cor to aux lem}, as $M\to\infty$, we have $n\to\infty$.

Fix $\eta > 0$ small. Let $M$ be sufficiently large that $D_1C_{\mathcal{S}}\la_\de^n<\eta$ where $D_1$ is as in Lemma \ref{bound on height} and $\la_\de$, $C_\mathcal{S}$ are as in condition \eqref{fast contr diff}. Let $D_0$ be the constant from bounded distortion, Proposition \ref{bounded distortion}. Let $C_N$ be a cylinder of rank $N\geq M$ containing $x$ such that $\diam(C_N) \leq D_0^{-1}\|T'\|_\infty^{-M}\de$. Therefore, for $0\leq j\leq M-1$, if $T^jx\in X\setminus B_\ep(P)$, then the intersections $T^j(C_N)\cap B_\de(P)=\emptyset$. By Proposition \ref{bound on height} and condition \eqref{fast contr diff},
\begin{align*}
\Height(C_N)&\leq D_1\sup_{(y,s)\in C_N\times U_{C_N}}\prod_{j=0}^{N-1}\del h_{T^jy}(g^{j+1}_y(s))\leq D_1C_{\mathcal{S}}\la_\de^n <\eta.
\end{align*}
Thus, $x$ is continuous on $\XbP$.
\end{proof}
\end{Prop}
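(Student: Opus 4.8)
The plan is to combine the upper bound on the height of the quasi-graph over a cylinder (Lemma~\ref{upper sup bound on height}, equivalently the right-hand inequality of Proposition~\ref{bound on height}) with the fibrewise contraction estimate~\eqref{fast contr diff} and the expansion of $T$. The point is that because $x\in\XbP$, Corollary~\ref{cor to aux lem} forces the forward orbit of $x$ to leave $B_\ep(P)$ infinitely often, and each such excursion produces a definite contraction $\la_\de<1$ in the fibre; provided we work with a cylinder thin enough that its iterates shadow those of $x$, this uniformly bounds the product appearing in the height estimate by a power of $\la_\de$. Since cylinders containing $x$ shrink to $x$, this yields continuity of $U$ at $x$.

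In detail, fix $\eta>0$. Let $\ep$ be as in Lemma~\ref{aux lem bad set} and choose $0<\de<\ep/2$. Since $x\in\XbP$, the number $n(M)$ of indices $0\le j\le M-1$ with $T^jx\notin B_\ep(P)$ tends to $\infty$ as $M\to\infty$ (Corollary~\ref{cor to aux lem}), so we may fix $M$ with $D_1C_{\mathcal S}\la_\de^{\,n(M)}<\eta$, where $D_1$ is the constant of Proposition~\ref{bound on height} and $C_{\mathcal S},\la_\de$ are as in~\eqref{fast contr diff}. Now choose $N\ge M$ and a rank-$N$ cylinder $C_N\ni x$ thin enough that $\|T'\|_\infty^{M}\diam(C_N)\le\de$; this is possible since $\diam(C_N)\to0$ as $N\to\infty$ (Proposition~\ref{bounded distortion}, or directly from the $\theta$-contraction of the inverse branches). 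Then for $0\le j\le M-1$ we have $\diam\bigl(T^j(C_N)\bigr)\le\|T'\|_\infty^{M}\diam(C_N)\le\de$, so whenever $T^jx\notin B_\ep(P)$ the interval $T^j(C_N)$, which contains $T^jx$ and has length at most $\de<\ep/2$, is disjoint from $B_\de(P)$. Consequently, for every $y\in C_N$ the orbit segment $y,Ty,\dots,T^{N-1}y$ visits $X\setminus B_\de(P)$ at least $n(M)$ times, so~\eqref{fast contr diff} gives
\[
\prod_{j=0}^{N-1}\del h_{T^jy}\bigl(g^{j+1}_y(s)\bigr)\le C_{\mathcal S}\la_\de^{\,n(M)}
\]
for all $(y,s)\in C_N\times U_{C_N}$. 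Feeding this into Proposition~\ref{bound on height} yields $\Height(U_{C_N})\le D_1C_{\mathcal S}\la_\de^{\,n(M)}<\eta$.

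To conclude, note that the part of $U$ lying over the interval $C_N$ is contained in the strip $C_N\times U_{C_N}$, whose vertical extent is $\Height(U_{C_N})<\eta$; since $u(x)\in U_{C_N}$, every point of $U$ over $C_N$ has fibre coordinate within $\eta$ of $u(x)$. As $C_N$ contains a neighbourhood of $x$ in $X$ (if $x$ is a cylinder endpoint, apply the same argument to the two rank-$N$ cylinders meeting at $x$) and $\eta>0$ was arbitrary, this says precisely that $U$ is continuous at $x$.

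The step I expect to be the main obstacle, and the one needing the most care, is the shadowing argument: verifying that a cylinder thin enough in the above sense really does have all of its first $M$ iterates avoiding $B_\de(P)$ exactly at those times when the orbit of $x$ avoids $B_\ep(P)$, so that the single contraction factor $C_{\mathcal S}\la_\de^{\,n(M)}$ of~\eqref{fast contr diff} applies simultaneously to every point of the cylinder. This is where the gap $\de<\ep/2$ and the estimate on $\diam\bigl(T^j(C_N)\bigr)$ are jointly used; everything else is a matter of assembling the height and distortion estimates already established.
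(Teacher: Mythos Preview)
Your proposal is correct and follows essentially the same approach as the paper's proof: both choose $M$ so that $D_1C_{\mathcal S}\la_\de^{n(M)}<\eta$, take a cylinder $C_N$ thin enough that its first $M$ iterates stay within $\de$ of those of $x$, deduce that every $y\in C_N$ visits $X\setminus B_\de(P)$ at least $n(M)$ times, and then combine \eqref{fast contr diff} with Proposition~\ref{bound on height}. Your write-up is in fact slightly more explicit than the paper's on the shadowing step and on the cylinder-endpoint case.
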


Using continuity, we can prove that every point in the vertical cylinder $U_{C_n}$ is attained at some point by the quasi-graph.

\begin{Prop}\label{every vertical point attained}
Let $t\in U_{C_n}$.  Then there exists $x\in C_n$ such that $t\in U_x$.
\begin{proof}
Let  $s=\sup (U_{C_n})$ and let $(x,s)\in U$. As $x\in C_{j,n}$ for some $j$, we must have $s=\sup (U_{C_{j,n}})$. A similar statement holds for the lower boundary. Let $t\in U_{C_n}$. Suppose $t\notin U_{C_{j,n}}$ for all $j$ such that $0\leq j\leq b-1$, then we must have two adjacent cylinders of the same rank $C_{j_0,n}, C_{j_1, n}\subset C_n$ such that $\sup (U_{C_{j_0,n}}) <t<\inf (U_{C_{j_1,n}})$.

The cylinders $C_{j_0,n}$, $C_{j_1,n}$ intersect at a single point, say $z$.  Hence
\[
\liminf_{y \in \XbP, y\to z}u(y)<t<\limsup_{y\in \XbP, y\to z}u(y).
\]
If $z\in \XbP$, this contradicts the continuity of $u(z)$ on $\XbP$ guaranteed by Proposition \ref{u connected}; otherwise $z\in R_P$, so $t\in U_z$. Hence, we can find $j$ such that $t\in U_{C_{j,n}}$. By iteration, for any $N>n$, there exist nested cylinders $C_N$ of rank $N$ such that $t\in U_{C_N}\subset U_{C_n}$. Letting $N\to\infty$, we have $C_N\searrow \{y\}$ for some $y\in C_n$.  Hence $t\in U_y$.
\end{proof}
\end{Prop}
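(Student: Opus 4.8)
The plan is to reduce everything to a single topological fact: that the slice $U\cap(C_n\times\RR)$ is connected. Granting this, its image $\pi_2(U\cap(C_n\times\RR))$ under projection to the fibre $\RR$ is connected, hence an interval; since $U_{C_n}$ is by definition the smallest interval $J$ with $U\cap(C_n\times\RR)\subseteq C_n\times J$, that image is contained in $U_{C_n}$ and, being an interval that contains the fibre-slices $U_x$ for all $x\in C_n$, it must in fact equal $U_{C_n}$. Then for $t\in U_{C_n}$ there is $(x,t)\in U$ with $x\in C_n$, i.e.\ $t\in U_x$, which is the claim.

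To prove connectedness I would first check that $U\cap(C_n\times\RR)$ is compact. It is bounded because $u$ is uniformly bounded by Theorem \ref{existence and uniqueness of la1}. For closedness, write $\underline u(x)=\liminf_{y\in\XbP,y\to x}u(y)$ and $\overline u(x)=\limsup_{y\in\XbP,y\to x}u(y)$; a short diagonal argument using density of $\XbP$ shows these are lower, resp.\ upper, semicontinuous, and Proposition \ref{u connected} (continuity of the quasi-graph on $\XbP$) gives $U_x=\{x\}\times[\underline u(x),\overline u(x)]$ for \emph{every} $x\in X$, so that $U\cap(C_n\times\RR)=\{(x,s):x\in C_n,\ \underline u(x)\le s\le\overline u(x)\}$ is closed. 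Now suppose $U\cap(C_n\times\RR)=A\sqcup B$ with $A,B$ nonempty and relatively closed, hence compact. Each fibre $U_x$, $x\in C_n$, is a point or a closed interval and so is connected, whence $U_x\subseteq A$ or $U_x\subseteq B$; therefore the vertical projections $\pi(A),\pi(B)$ are disjoint, and they are compact with union $\pi(U\cap(C_n\times\RR))=C_n$. This exhibits the nondegenerate interval $C_n$ as a disjoint union of two nonempty closed sets, which is impossible. Hence $U\cap(C_n\times\RR)$ is connected and the reduction above finishes the proof.

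I expect the main obstacle to be exactly the closedness step, i.e.\ the fact that over points of $\XbP$ the quasi-graph is genuinely single-valued with no jump; this is Proposition \ref{u connected}, which in turn rests on the partial-hyperbolicity height estimates of \S\ref{bounds on height}, and without it the identification $U_x=\{x\}\times[\underline u(x),\overline u(x)]$ fails on $\XbP$ and the argument breaks. If one prefers to avoid invoking a global closedness statement, the same conclusion can be reached by bisection in the style of the paper: split $C_n$ into its rank-$(n+1)$ subcylinders, which tile $C_n$ by consecutive closed intervals; if $t\notin U_{C_j}$ for every subcylinder $C_j$ then, these being intervals, there is an adjacent pair $C_{j_0},C_{j_0+1}$ with $\sup U_{C_{j_0}}<t<\inf U_{C_{j_0+1}}$, and at their common endpoint $z$ one reads off $\underline u(z)<t<\overline u(z)$; by Proposition \ref{u connected} this forces $z\in R_P$, so $t\in U_z\subseteq U_{C_{j_0}}$, contradicting $t\notin U_{C_{j_0}}$. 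Hence $t\in U_{C_j}$ for some subcylinder, and iterating yields nested cylinders $C_n\supseteq C_{n+1}\supseteq\cdots$ with $t\in U_{C_N}$ for all $N$; since $\diam C_N\to 0$ these shrink to a point $y$, and taking $N\to\infty$ in the $\liminf/\limsup$ bounds defining $U_{C_N}$ gives $t\in U_y$.
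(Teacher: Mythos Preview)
Your proposal contains two arguments, and both are correct. The second, bisection, argument is essentially the paper's own proof: the paper also refines $C_n$ into its rank-$(n{+}1)$ subcylinders, observes that if $t$ lies in none of the corresponding vertical intervals then an adjacent pair straddles $t$, reads off $\underline u(z)<t<\overline u(z)$ at their shared endpoint $z$, invokes Proposition~\ref{u connected} to force $z\in R_P$ and hence $t\in U_z$, and then iterates to obtain a nested sequence of cylinders shrinking to a point.

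Your first argument is a genuinely different packaging. You recast the statement as a purely topological fact---that $U\cap(C_n\times\RR)$ is a compact connected set---and then project to the fibre. The key analytic input is the same as in the paper (continuity of $u$ on $\XbP$ via Proposition~\ref{u connected}, which is where partial hyperbolicity enters), but you use it once, to identify $U_x$ with $\{x\}\times[\underline u(x),\overline u(x)]$ for all $x$, and then the rest is soft: semicontinuity of $\underline u,\overline u$ gives closedness, fibre-connectedness plus connectedness of $C_n$ gives connectedness of the slice, and projection finishes. This is cleaner and isolates exactly what is being used; it also yields the slightly stronger statement that $U$ itself is compact and connected. The paper's bisection is more hands-on and constructive (it actually locates $y$ as a limit of nested cylinders), which fits better with the box-counting estimates that follow in \S\ref{bdd distortion section}. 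Either route is fine here.
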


\subsection{Bounded distortion}\label{bdd distortion section}
Our first task is to prove that a point in a vertical cylinder iterated under the dynamics is not repelled too far away from the quasi-graph

\begin{Lem}\label{itrt forw t bdd}
Let $\skewT$ be partially hyperbolic. Suppose $t\in U_{C_n}$. There exist intervals $V\subset \RR$ depending only on the skew product such that, for any $x\in C_n$, we have $g^n_x(t)\in V$.
\begin{proof}
As $t\in U_{C_n}$, by Proposition \ref{every vertical point attained} there exists $y\in C_n$ such that $t\in U_y$. Therefore, by Corollary \ref{existence of the us}, there exists $s\in\RR^\rho$ such that $u_s:X\to \RR$ with $u_s=u$ on $\XbP$, such that $u_s(y)=t$. We need to extend the invariant graph as $y$ may be in $R_P$. By the Mean Value Theorem,
\begin{align*}
|g^n_x(t)-g^n_y(t)|&=|g^n_x(u_s(y))-g^n_y(u_s(y))|\\
&\leq \sum_{j=0}^{n-1}\left|g_{T^{n-1}x}\dots g_{T^{j+1}x}g_{T^jx}(u_s(T^{j}y))-g_{T^{n-1}x}\dots g_{T^{j+1}x}g_{T^jy}(u_s(T^{j}y))\right|\\
&\leq \sum_{j=0}^{n-1}\|\del g_{T^{n-1}x}\|_\infty\dots\|\del g_{T^{j+1}x}\|_\infty|g_{T^jx}(u_s(T^{j}y))-g_{T^jy}(u_s(T^{j}y))|\\
&\leq \sum_{j=0}^{n-1}\|\del g_{T^{n-1}x}\|_\infty\dots\|\del g_{T^{j+1}x}\|_\infty C_Wd(T^jx,T^jy),
\end{align*}
where $C_W$ is the supremum of the Lipschitz constants of the functions $x\mapsto g_x(r)$ for $r\in[-\|u_s\|_\infty, \|u_s\|_\infty]$, $u_s$ is bounded by $u$ and the choice of $t\in U$. As $d(T^jx, T^jy)\in C_{n-j}$ for some cylinder of rank $n-j$, using Proposition \ref{bounded distortion}, we have
\[
|g^n_x(t)-g^n_y(t)| \leq \sum_{j=0}^{n-1}C_W D_0\|\del g\|_\infty^{n-j}\left(\m|T'|^{n-j}\right)^{-1}
\]
where $\|\del g\|_\infty \leq\ka^{-1} \m|T'|$ by the partial
hyperbolicity assumption \eqref{partial hyp}.  Hence the sum converges and
\[
|g^n_x(t)-g^n_y(t)|\leq \frac{C_WD_0}{1-\ka^{-1}}=C_V.
\]
As $U$ is $\skewT$-invariant and $t\in U_y$ for some $y\in C_n$  by Proposition \ref{every vertical point attained}, we know that $g^n_y(t)\in [-\|u\|_\infty,\|u\|_\infty]$. Therefore, $g^n_x(t)\in [-\|u_s\|_\infty-C_V,\|u_s\|_\infty+C_V]=:V$ as required.
\end{proof}
\end{Lem}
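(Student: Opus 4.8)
The plan is to use the $\skewT$-invariance of the quasi-graph to control the forward orbit of $t$ under one cleverly chosen base point in $C_n$, and then transfer that control to an arbitrary base point via a horizontal telescoping estimate whose convergence is guaranteed by partial hyperbolicity.

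First I would apply Proposition~\ref{every vertical point attained} to produce a point $y\in C_n$ with $t\in U_y$, so that $(y,t)\in U$. Since $U_y$ lies between $\liminf$ and $\limsup$ of $u$ we have $|t|\le\|u\|_\infty$, and by Corollary~\ref{existence of the us} there is an $s\in\RR^\rho$ with $u_s=u$ on $\XbP$, $u_s(y)=t$, and $\|u_s\|_\infty$ controlled purely by $\|u\|_\infty$. Because $U$ is $\skewT$-invariant (Proposition~\ref{quasi-graph inv}), $\skewT^n(y,t)=(T^ny,g^n_y(t))\in U$, and since $u_s(T^ny)=g^n_y(u_s(y))=g^n_y(t)$ we get $|g^n_y(t)|\le\|u_s\|_\infty$. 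This disposes of the ``correct'' base point $y$.

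The core step is to bound $|g^n_x(t)-g^n_y(t)|$ for arbitrary $x\in C_n$. I would expand this as a telescoping sum over $j=0,\dots,n-1$, replacing $g_{T^jx}$ by $g_{T^jy}$ one factor at a time; the $j$-th term is at most $\bigl(\prod_{i=j+1}^{n-1}\|\del g_{T^ix}\|_\infty\bigr)\,|g_{T^jx}(r_j)-g_{T^jy}(r_j)|$, where $r_j=g^j_y(u_s(T^jy))$ sits in a fixed compact set $W$ (again because $u_s$ is bounded independently of the particular $s$). Now $T^jx$ and $T^jy$ lie in a common cylinder of rank $n-j$, so bounded distortion (Proposition~\ref{bounded distortion}) gives $d(T^jx,T^jy)\le D_0(\m|T'|)^{-(n-j)}$, and the Lipschitz dependence of $x\mapsto g_x(r)$ on the base point (condition~\eqref{g holder}, with $C_g(r)\le C_W$ on $W$) bounds the $j$-th term by $C_W D_0\,\|\del g\|_\infty^{\,n-1-j}(\m|T'|)^{-(n-j)}$. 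Summing over $j$ yields a geometric series with ratio $\|\del g\|_\infty/\m|T'|$. Here partial hyperbolicity~\eqref{partial hyp} enters: since $\m(\del h)=1/\|\del g\|_\infty$, it forces $\|\del g\|_\infty\le\ka^{-1}\m|T'|<\m|T'|$, so the series converges to a constant $C_V$ depending only on the skew product. Then $g^n_x(t)\in[g^n_y(t)-C_V,\,g^n_y(t)+C_V]\subset[-\|u_s\|_\infty-C_V,\,\|u_s\|_\infty+C_V]=:V$.

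The hard part is really the bookkeeping in the telescoping sum: one must ensure all the intermediate fibre points $r_j$ stay inside a single fixed compact set (so the Hölder/Lipschitz constants in~\eqref{g holder} are uniformly bounded), which hinges on the uniform boundedness of $u_s$ from Corollary~\ref{existence of the us} together with $|t|\le\|u\|_\infty$; and one must check that the geometric series closes, which is precisely where the partial hyperbolicity hypothesis is indispensable (a merely Hölder dependence on the base point would not suffice). The rest is a routine estimate.
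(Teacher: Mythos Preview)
Your proposal is correct and follows essentially the same route as the paper: pick $y\in C_n$ with $t\in U_y$ via Proposition~\ref{every vertical point attained}, use Corollary~\ref{existence of the us} and invariance of $U$ to bound $g^n_y(t)$, then telescope $|g^n_x(t)-g^n_y(t)|$ and close the geometric series using bounded distortion and partial hyperbolicity. One small slip: the intermediate fibre point should be $r_j=g^j_y(t)=u_s(T^jy)$, not $g^j_y(u_s(T^jy))$; with this correction your argument matches the paper's proof line for line.
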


Now we can prove our bounded distortion result in the vertical direction.  We prove this in two parts, by firstly varying the horizontal coordinate then the vertical coordinate. This is much more subtle than bounded distortion in, for example, \cite[Lemma 3]{Bedford} as we do not have uniform contraction of the heights of cylinders.

\begin{Lem}\label{bounded distortion fix t}
Let $t\in U_{C_n}$. Let $\skewT\in \mathcal{S}(X,\RR, C^{\Lip,2}_P)$ be partially hyperbolic. Then for all $x,y \in C_n$, there exists constant $D_2>0$ such that
\begin{equation}\label{bdd dist fix t eq}
D_2 \leq \frac{\prod_{j=0}^{n-1}\del h_{T^jx}(g^{j+1}_x(t))}{\prod_{j=0}^{n-1}\del h_{T^jy}(g^{j+1}_y(t))}\leq D_2.
\end{equation}
\begin{proof}
As $\log$ is smooth and $X$ is compact, it suffices to bound
\begin{align}\label{bound fix t 0}
\left|\sum_{j=0}^{n-1}\log\del h_{T^jx}(g^{j+1}_x(t))-\log\del h_{T^jy}(g^{j+1}_y(t))\right|&\leq \sum_{j=0}^{n-1} C'_g(T^jx)|g^{j+1}_x(t) -g^{j+1}_y(t)| + C_g(g^{j+1}_y(t))d(T^jx,T^jy).
\end{align}
Denoting the least Lipschitz constant of $t\mapsto\log \del g_x(t)$ by $C'_g(x)$ and the least Lipschitz constant of $x\mapsto\log\del g_x(t)$ by $C_g(t)$. Both exist as $g\in C_P^{2,\Lip}$. As $X$ is compact and by Lemma \ref{itrt forw t bdd}, $g^{j+1}_y(t)\in V$ for $j$ such that $0\leq j\leq n-1$, the Lipschitz constants $C'_g(x)$ and $C_g(t)$ are bounded independently of $C_n$. We bound
\begin{align}\label{cont bound fix t}
\nonumber |g^{j+1}_x(t) -g^{j+1}_y(t)| &\leq \sum_{i=0}^{j}|g_{T^{j}x}\dots g_{T^{i+1}x}g_{T^ix}g_{T^{i-1}y}\dots g_y(t)- g_{T^{j}x}\dots g_{T^{i+1}x}g_{T^iy}g_{T^{i-1}y}\dots g_y(t)|\\
\nonumber &\leq\sum_{i=0}^{j}\|\del g_{T^{j}x}\|_\infty\dots \|\del g_{T^{i+1}x}\|_\infty|g_{T^ix}g_{T^{i-1}y}\dots g_y(t) - g_{T^iy}g_{T^{i-1}y}\dots g_y(t)|\\
&\leq \sum_{i=0}^{j}C_g(g^i_yt)\|\del g\|_\infty^{j-i} d(T^ix, T^iy).
\end{align}
Again, $g^i_y(t)\in V$ for $i\leq n$, so the constant $C_g(g^i_y(t))\leq C_V$ where $C_V>0$ is independent of the choice of cylinder. As $x,y\in C_n$, we have $T^ix, T^iy\in C_{n-i}$ for some cylinder of rank $n-i$.  By Proposition~\ref{bounded distortion}, there exists $D_0>0$ such that
\[
d(T^ix, T^iy)\leq D_0 (\m|T|)^{-(n-i)} = D_0(\m|T'|)^{-(n-j)}(\m|T'|)^{-(j-i)}.
\]
By partial hyperbolicity, \eqref{cont bound fix t} is bounded by
\[
\sum_{i=0}^{j}C_VD_0\ka^{-(j-i)}(\m|T'|)^{-(n-j)}\leq C_1(\m|T'|)^{-(n-j)},
\]
for some constant $C_1>0$. Substituting this into \eqref{bound fix t 0} and using proposition~\ref{bounded distortion}, we obtain
\begin{align*}
\sum_{j=0}^{n-1} C'_g(T^jx)|g^{j+1}_x(t) -g^{j+1}_y(t)| + C_Vd(T^jx,T^jy)&\leq C_2\sum_{j=0}^{n-1}(\m|T'|)^{-(n-j)}.
\end{align*}
As $(\m|T'|)^{-1}<1$, the sum converges as required.
\end{proof}
\end{Lem}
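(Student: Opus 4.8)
The plan is to pass to logarithms and show that
$\bigl|\sum_{j=0}^{n-1}\bigl(\log\del h_{T^jx}(g^{j+1}_x(t))-\log\del h_{T^jy}(g^{j+1}_y(t))\bigr)\bigr|$
is bounded by a constant depending only on the skew product, hence independent of $n$ and of $C_n$; exponentiating then yields \eqref{bdd dist fix t eq}. By Remark~\ref{g orient pres} we may assume $\del g_x(t)>0$, and conditions \eqref{non expanding} and \eqref{g non0} (applied with $n=1$) bound $\del h_x(s)$ away from $0$ and from $\infty$ uniformly in $(x,s)$, so $\log\del h$ is bounded; since $g\in C^{\Lip,2}_P$ it is moreover Lipschitz in the base variable and, on any fixed compact fibre set, in the fibre variable. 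I would therefore estimate each summand by a triangle inequality, separating the change in the fibre coordinate from the change in the base coordinate, obtaining \eqref{bound fix t 0}: each summand is at most $C'_g(T^jx)\,|g^{j+1}_x(t)-g^{j+1}_y(t)|+C_g(g^{j+1}_y(t))\,d(T^jx,T^jy)$, where $C'_g$ and $C_g$ denote the relevant one-variable Lipschitz constants of $\log\del h$ (equivalently of $\log\del g$).

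The term involving $d(T^jx,T^jy)$ is the easy one. Since $x,y\in C_n$, the points $T^jx,T^jy$ both lie in $T^j(C_n)$, a cylinder of rank $n-j$, so bounded distortion (Proposition~\ref{bounded distortion}) gives $d(T^jx,T^jy)\le D_0(\m|T'|)^{-(n-j)}$; and since $g^{j+1}_y(t)$ lies in the fixed compact interval $V$ furnished by Lemma~\ref{itrt forw t bdd}, the constant $C_g(g^{j+1}_y(t))$ is bounded by something depending only on the skew product. Summing over $j$ gives a geometric series with ratio $(\m|T'|)^{-1}<1$, which converges.

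The term involving $|g^{j+1}_x(t)-g^{j+1}_y(t)|$ is where partial hyperbolicity is essential, and I expect this estimate to be the main obstacle: a priori, because $g$ expands in the fibre, the forward orbit of the fixed fibre point $t$ over two distinct base points could separate exponentially in $j$. To control it I would telescope, swapping $g_{T^ix}$ for $g_{T^iy}$ one index at a time and propagating the resulting error forward through the remaining maps by their derivative norms, which gives $|g^{j+1}_x(t)-g^{j+1}_y(t)|\le\sum_{i=0}^{j}\|\del g\|_\infty^{\,j-i}C_g(g^i_y(t))\,d(T^ix,T^iy)$. Again $g^i_y(t)\in V$ by Lemma~\ref{itrt forw t bdd}, so the Lipschitz constants are uniform; $d(T^ix,T^iy)\le D_0(\m|T'|)^{-(n-i)}$ by Proposition~\ref{bounded distortion}; and partial hyperbolicity \eqref{partial hyp} gives $\|\del g\|_\infty=\m(\del h)^{-1}\le\ka^{-1}\m(|T'|)$. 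Hence $\|\del g\|_\infty^{\,j-i}(\m|T'|)^{-(n-i)}\le\ka^{-(j-i)}(\m|T'|)^{-(n-j)}$, and summing the geometric series in $\ka^{-1}<1$ over $i$ gives $|g^{j+1}_x(t)-g^{j+1}_y(t)|\le C_1(\m|T'|)^{-(n-j)}$.

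Substituting this back, and using that $C'_g(T^jx)$ is uniformly bounded (the relevant fibre points again lying in $V$), the fibre-variable contribution is at most $C_2\sum_{j=0}^{n-1}(\m|T'|)^{-(n-j)}$, once more a convergent geometric series. Adding the two contributions bounds the full sum of log-differences by a constant $\log D_2$ depending only on the skew product, and exponentiating completes the proof of \eqref{bdd dist fix t eq}.
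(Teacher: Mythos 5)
Your proposal is correct and follows essentially the same route as the paper: the same split of each log-summand into a fibre-variable and a base-variable Lipschitz term as in \eqref{bound fix t 0}, the same telescoping estimate \eqref{cont bound fix t} for $|g^{j+1}_x(t)-g^{j+1}_y(t)|$ controlled via Lemma \ref{itrt forw t bdd}, Proposition \ref{bounded distortion} and the partial hyperbolicity bound $\|\del g\|_\infty\leq\ka^{-1}\m(|T'|)$, and the same final geometric series in $(\m|T'|)^{-1}$. No substantive differences to report.
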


We now perturb the vertical coordinate.
\begin{Lem}\label{bounded distortion vertical}
Let $x\in C_n$. Let $\skewT\in \mathcal{S}(X,\RR, C^{\Lip,2}_P)$ be partially hyperbolic. Then for all $s,t\in U_{C_n}$, there exists constant $D_3>0$ such that
\begin{equation}\label{to bound for bdd dist fibre}
D_3\leq\frac{\prod_{j=0}^{n-1}\del h_{T^jx}(g^{j+1}_x(t))}{\prod_{j=0}^{n-1}\del h_{T^jx}(g^{j+1}_x(s))}\leq D_3.
\end{equation}
\begin{proof}
If $x \in R_P$ then, as $\del g_p(t)=1$ for $p\in P$ and all $t\in
\RR$, we can ignore the terms of the product when $T^jx\in P$. By
Lemma \ref{itrt forw t bdd}, $g^{j+1}_x(t)$ and $g^{j+1}_x(s)\in
V$. As $\log$ is smooth on the compact set $X$, it suffices to bound
\begin{align}\label{bounded height fix t eq0}
\sum_{j=0}^{n-1}|\log\del h_{T^jx}(g^{j+1}_x(t))-\log\del h_{T^jx}(g^{j+1}_x(s))|\leq \sum_{j=0}^{n-1}\sup_{r\in V}|\del(\log\del h_{T^jx}(r))||g^{j+1}_x(t)-g^{j+1}_x(s)|,
\end{align}
as $h_x:\RR\to \RR$ is $C^2$. Denote $t' = g^n_x(t)\in V$ and $s' = g^n_x(s)\in V$. As $h_p$ is the identity for $p\in P$, we have $\del(\log\del h_p(t))=0$ for all $t\in\RR$, so there exists $r'\in V$ such that \eqref{bounded height fix t eq0} can be written as
\begin{align}\label{bounded height fix t eq2}
&\nonumber\sum_{j=0}^{n-1}\sup_{r\in V}\left|\del(\log\del h_{T^jx})(r)\right||h_{T^{j+1}x}\dots h_{T^{n-1}x}(t')-h_{T^{j+1}x}\dots h_{T^{n-1}x}(s')|\\
&\hspace{2cm}=\sum_{j=0}^{n-1}|\del(\log\del h_{T^jx}(r'))-\del(\log\del h_p(r'))||h_{T^{j+1}x}\dots h_{T^{n-1}x}(t')-h_{T^{j+1}x}\dots h_{T^{n-1}x}(s')|.
\end{align}
Recall, by assumption, that $x\mapsto\del^2 h_{T^jx}(t)$ is
Lipschitz continuous. As $r'\in V$, there is a constant $C_V$ depending
on $V$ such that \eqref{bounded height fix t eq2} is bounded by
\begin{equation}\label{bounded height fix t eq3}
\sum_{j=0}^{n-1}C_V d(T^jx, p)|h_{T^{j+1}x}\dots h_{T^{n-1}x}(t')-h_{T^{j+1}x}\dots h_{T^{n-1}x}(s')|.
\end{equation}
By the Mean Value Theorem, \eqref{bounded height fix t eq3} is bounded by
\begin{equation}\label{bounded height fix t eq4}
\sum_{j=0}^{n-1}C_V d(T^jx, p)\|\del (h_{T^{j+1}x}\dots h_{T^{n-1}x})\|_\infty|t'-s'|
\end{equation}
where $\|\cdot\|_\infty$ denotes the uniform norm in the fibre direction.
As $|t'-s'|\leq \diam V$ we can incorporate it in the constant $C_V$
and redefine it as $C_V\diam V$.  Consider the orbit segment $x,\dots,
T^{n-1}x$. Let $\de>0$ be sufficiently small so that Lemma \ref{aux
  lem bad set} holds. Suppose that $K$ elements of the orbit $x,\dots,
T^{n-1}x$ are contained in $X\setminus B_\de(P)=G$.

Denote each visit to $G$ by $T^{j_k}x\in G$. If $x\not\in G$, we let $n_{-1}=0$ and $n_0$ is the first visit to $G$, and if $x\in G$ then $n_0=0$. Thus, we can write the sum \eqref{bounded height fix t eq4} as
\begin{equation}\label{bounded height fix t eq5}
\sum_{k=-1}^{K-1}\left(\sum_{j=i_k}^{i_{k+1}-1}C_V d(T^jx, p)\|\del (h_{T^{j+1}x}\dots h_{T^{n-1}x})\|_\infty\right),
\end{equation}
where $T^jx\in B_\ep(P)$ for $i_k<j<i_{k+1}$. Consider each summand of \eqref{bounded height fix t eq5} individually. For $i_k\leq j<i_{k+1}$ there are at least $K-k-1$ visits  to $G$ in the orbit segment $T^jx\dots T^{n-1}x$. Hence, by condition \eqref{fast contr diff} the $k^{th}$ term of \eqref{bounded height fix t eq5} is bounded by
\begin{equation}
\sum_{j=i_k}^{i_{k+1}-1}C_V d(T^jx, p)\|\del (h_{T^{j+1}x}\dots h_{T^{n-1}x})\|_\infty \leq C_\mathcal{S}\la_\de^{K-k-1}\sum_{j=i_k}^{i_{k+1}-1}C_V d(T^jx, p).
\end{equation}
Finally, as $T^{i_k+1}x\dots T^{i_{k+1}-1}x\in B_\ep(P)$, by Lemma \ref{aux lem bad set} and bounded distortion, the sum converges to, say $C_1>0$. So, if $x\in G$
\begin{align}\label{bounded height fix t eq7}
\nonumber \sum_{j=0}^{n-1}|\log\del h_{T^jx}(g^{j+1}_x(t))-\log\del h_{T^jx}(g^{j+1}_x(s))|&\leq \sum_{k=0}^{K-1}C_\mathcal{S}C_1\la_\de^{K-k-1}\leq C_2.
\end{align}
If $x\not\in G$, by Lemma \ref{aux lem bad set}, the extra $k=-1$ term is bounded by $\sum_{j=0}^{i_0-1}C_1d(T^jx, p) \leq C_{3}$. Letting $D_2=e^{C_{2}+ C_{3}}$ we have completed the proof.
\end{proof}
\end{Lem}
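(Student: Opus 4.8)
The plan is to bound the logarithm of the ratio appearing in \eqref{to bound for bdd dist fibre}; concretely, I will show that
\[
\Sigma \;:=\; \sum_{j=0}^{n-1}\bigl|\log\del h_{T^jx}\bigl(g^{j+1}_x(t)\bigr)-\log\del h_{T^jx}\bigl(g^{j+1}_x(s)\bigr)\bigr|
\]
is bounded by a constant depending only on the skew product (and not on $n$, the cylinder $C_n$, or $s,t\in U_{C_n}$); exponentiating and setting $D_3=e^{\Sigma}$ then yields the claim. If $x\in R_P$ the orbit eventually enters $P$, and since $h_p=\Id$ for $p\in P$ every term with $T^jx\in P$ vanishes, so such terms may be dropped (in particular $\Sigma$ is then a finite sum), and the argument below is unaffected.

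First I would establish the pointwise bound on the $j$-th summand. As $\skewT\in\mathcal{S}(X,\RR,C^{\Lip,2}_P)$ each $h_x$ is $C^2$ with $\del h_x$ bounded away from $0$ (by \eqref{g non0}), so $r\mapsto\log\del h_x(r)$ is $C^1$ with $\del(\log\del h_x)(r)=\del^2 h_x(r)/\del h_x(r)$, and $h_p=\Id$ gives $\del(\log\del h_p)\equiv 0$ for $p\in P$. Since $x\mapsto\del^2 h_x(r)$, $x\mapsto\del h_x(r)$ and $x\mapsto h_x(r)$ are Lipschitz, $x\mapsto\del(\log\del h_x)(r)$ is Lipschitz uniformly for $r$ in the fixed compact interval $V$ of Lemma \ref{itrt forw t bdd}, so comparison with the nearest $p\in P$ gives
\[
\sup_{r\in V}\bigl|\del(\log\del h_{T^jx})(r)\bigr|=\sup_{r\in V}\bigl|\del(\log\del h_{T^jx})(r)-\del(\log\del h_{p})(r)\bigr|\le C_V\,d(T^jx,p).
\]
By Proposition \ref{every vertical point attained} every point of $U_{C_n}$ is attained by $U$ over $C_n$, so applying Lemma \ref{itrt forw t bdd} at each rank $m\le n$ (to the rank-$m$ cylinder $C_m\supseteq C_n$ and to $t,s\in U_{C_n}\subseteq U_{C_m}$) gives $g^{m}_x(t),g^{m}_x(s)\in V$ for all $0\le m\le n$. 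Writing $t'=g^n_x(t)$, $s'=g^n_x(s)\in V$ and using $g^{j+1}_x=h_{T^{j+1}x}\cdots h_{T^{n-1}x}\circ g^n_x$, the Mean Value Theorem gives
\[
\bigl|g^{j+1}_x(t)-g^{j+1}_x(s)\bigr|=\bigl|h_{T^{j+1}x}\cdots h_{T^{n-1}x}(t')-h_{T^{j+1}x}\cdots h_{T^{n-1}x}(s')\bigr|\le\bigl\|\del(h_{T^{j+1}x}\cdots h_{T^{n-1}x})\bigr\|_\infty\,\diam V .
\]
Since $V$ is an interval containing both $g^{j+1}_x(t)$ and $g^{j+1}_x(s)$, a further use of the Mean Value Theorem in the fibre argument of $\log\del h_{T^jx}$ combines the two displays to bound the $j$-th summand of $\Sigma$ by $C_V\,\diam V\cdot d(T^jx,p)\cdot\|\del(h_{T^{j+1}x}\cdots h_{T^{n-1}x})\|_\infty$.

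It remains to sum this uniformly in $n$. Fix $\de>0$ small enough for Lemma \ref{aux lem bad set}, set $G=X\setminus B_\de(P)$, and suppose the orbit $x,\dots,T^{n-1}x$ meets $G$ exactly $K$ times, at indices $i_0<\cdots<i_{K-1}$ (with the conventions $i_{-1}=0$ when $x\notin G$ and $i_K:=n$); split $\Sigma$ into the blocks $i_{k-1}\le j<i_k$, $k=0,\dots,K$. Within block $k$ the orbit segment $T^{i_{k-1}+1}x,\dots,T^{i_k-1}x$ lies in $B_\de(P)$ shadowing a single fixed point $p$, so Lemma \ref{aux lem bad set} gives $d(T^jx,p)<\de\,\theta^{(i_k-1)-j}$ there, whence $\sum_{j=i_{k-1}}^{i_k-1}d(T^jx,p)\le 1+\de/(1-\theta)=:C_1$ (the single endpoint $j=i_{k-1}$, when it is a visit to $G$, contributes at most $\diam X\le 1$). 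On the other hand, for every $j$ in block $k$ the tail $T^{j+1}x,\dots,T^{n-1}x$ still meets $G$ at each of $i_k,\dots,i_{K-1}$, i.e.\ at least $K-1-k$ times, so condition \eqref{fast contr diff} gives $\|\del(h_{T^{j+1}x}\cdots h_{T^{n-1}x})\|_\infty\le C_{\mathcal{S}}\la_\de^{K-1-k}$. Hence block $k$ contributes at most $C_V\,\diam V\cdot C_1\cdot C_{\mathcal{S}}\la_\de^{K-1-k}$, and since $\la_\de<1$ the total over $k=0,\dots,K$ is a convergent geometric series bounded by a constant $C_2>0$ depending only on the skew product; set $D_3=e^{C_2}$.

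The step I expect to be the main obstacle is precisely this block decomposition. The tension is that the two factors $d(T^jx,p)$ and $\|\del(h_{T^{j+1}x}\cdots h_{T^{n-1}x})\|_\infty$ are small under opposite circumstances: $d(T^jx,p)$ is small only when the orbit is close to $P$, but then it is shadowing a fixed point and Lemma \ref{aux lem bad set} supplies block-internal geometric decay, whereas when $T^jx$ is far from $P$ (so $d(T^jx,p)=O(1)$) the tail product is already forced small because the remaining orbit returns to $G$ many more times. Turning this into a genuinely $n$-uniform double-geometric bound, rather than one that is merely finite for each fixed $n$, is the heart of the argument; everything else is a routine consequence of the $C^{\Lip,2}_P$ hypotheses and of Lemma \ref{itrt forw t bdd}, which confines all the relevant fibre coordinates to the fixed compact set $V$.
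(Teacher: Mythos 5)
Your proposal is correct and follows essentially the same route as the paper's proof: the summand is bounded by $C_V\,d(T^jx,p)\cdot\|\del(h_{T^{j+1}x}\cdots h_{T^{n-1}x})\|_\infty\cdot\diam V$ using the vanishing of $\del(\log\del h_p)$ at the fixed points together with the Lipschitz dependence in $x$ and the Mean Value Theorem, and the sum is then controlled by the same block decomposition between visits to $G=X\setminus B_\de(P)$, with Lemma \ref{aux lem bad set} giving the in-block geometric decay of $d(T^jx,p)$ and condition \eqref{fast contr diff} giving the $\la_\de^{K-k-1}$ decay of the tail product. Your explicit justification that $g^{m}_x(t),g^{m}_x(s)\in V$ for every $m\le n$ (by applying Lemma \ref{itrt forw t bdd} to the rank-$m$ cylinder containing $C_n$) is a small but welcome clarification of a step the paper only asserts.
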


This gives us the vertical bounded distortion estimate that we need. By these results and Lemma \ref{bound on height} we have the following.
\begin{Lem}\label{bound on height as want}
Let $\skewT\in \mathcal{S}(X,\RR, C^{\Lip,2}_P)$ be partially hyperbolic. There exists constant $D>1$ such that for any $x\in C_n$,
\[
D^{-1}\mathcal{D} h^{n}(x)\leq \mathop{\textnormal{Height}}(U_{C_n})\leq D\mathcal{D} h^{n}(x).
\]
\begin{proof}
By Lemma
  \ref{bound on height}, there exists $(y,t)\in C_n\times U_{C_n}$
  satisfying the upper bound and $(z,r)\in C_n\times U_{C_n}$
  satisfying the lower bound. By Lemmas \ref{bounded distortion}, \ref{bounded distortion fix t} and \ref{bounded
    distortion vertical} we have, for any $(x,s)\in C_n\times U_{C_n}$,
\[
(D_1D_2D_3)^{-1}\prod_{j=0}^{n-1}\del h_{T^jx}(g^{j+1}_x(s))\leq \Height(U_{C_n})\leq D_1D_2D_3\prod_{j=0}^{n-1}\del h_{T^jx}(g^{j+1}_x(s)).
\]
Let $D=D_1D_2D_3$. In particular, choosing $s=\sup(U_x)$, the result follows.
\end{proof}
\end{Lem}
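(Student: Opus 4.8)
The plan is purely to assemble bounds already in hand: Proposition~\ref{bound on height} (the height of $U_{C_n}$ is comparable, up to $D_1^{\pm1}$, to a supremum/infimum over $(x,s)\in C_n\times U_{C_n}$ of the products $\prod_{j=0}^{n-1}\del h_{T^jx}(g^{j+1}_x(s))$) together with the two vertical bounded distortion estimates, Lemmas~\ref{bounded distortion fix t} and \ref{bounded distortion vertical}, which together say that this product depends on the base-point $x\in C_n$ and the fibre-point $s\in U_{C_n}$ only up to a multiplicative constant. Here $\mathcal{D}h^n(x)$ is understood to mean the Birkhoff product $\prod_{j=0}^{n-1}\mathcal{D}h(T^jx)$.

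First I would chain Lemmas~\ref{bounded distortion fix t} and \ref{bounded distortion vertical}: fixing a reference value $t_0\in U_{C_n}$, Lemma~\ref{bounded distortion fix t} compares the products at $(x,t_0)$ and $(y,t_0)$ for $x,y\in C_n$ (constant $D_2$), and Lemma~\ref{bounded distortion vertical} compares the products at $(x,s)$ and $(x,t_0)$ for $s\in U_{C_n}$ (constant $D_3$). Composing, for any $(x,s),(y,t)\in C_n\times U_{C_n}$ the two products agree up to the factor $(D_2D_3)^{\pm1}$; in particular the $\sup$ and $\inf$ over $C_n\times U_{C_n}$ appearing in Proposition~\ref{bound on height} each differ from the value at a single fixed point of $C_n\times U_{C_n}$ by at most $D_2D_3$. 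Substituting into Proposition~\ref{bound on height} yields, for \emph{every} $(x,s)\in C_n\times U_{C_n}$,
\[
(D_1D_2D_3)^{-1}\prod_{j=0}^{n-1}\del h_{T^jx}(g^{j+1}_x(s))\leq \Height(U_{C_n})\leq D_1D_2D_3\prod_{j=0}^{n-1}\del h_{T^jx}(g^{j+1}_x(s)),
\]
and, enlarging the constants if necessary, we may take $D:=D_1D_2D_3>1$.

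It then remains to specialise $s$ so that the product becomes $\mathcal{D}h^n(x)$. I would take $s=\sup(U_x)$, which lies in $U_{C_n}$ because $U_x\subset\{x\}\times U_{C_n}$ whenever $x\in C_n$. By Remark~\ref{g orient pres} we may assume each $g_x$ is orientation preserving, and by Proposition~\ref{quasi-graph inv} the quasi-graph is $\skewT$-invariant with $\skewT(U_x)=U_{Tx}$; hence $g_x(\sup(U_x))=\sup(U_{Tx})$, and iterating gives $g^{j+1}_x(\sup(U_x))=\sup(U_{T^{j+1}x})$ for each $j$. Now $\sup(U_{T^{j+1}x})=u(T^{j+1}x)$ when $T^jx\in\XbP$ and $\sup(U_{T^{j+1}x})=\limsup_{y\in\XbP,\,y\to T^{j+1}x}u(y)$ when $T^jx\in R_P$, which in both cases is exactly the fibre argument appearing in the definition of $\mathcal{D}h(T^jx)$. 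Therefore $\del h_{T^jx}(g^{j+1}_x(\sup(U_x)))=\mathcal{D}h(T^jx)$ for every $j$, so the product equals $\mathcal{D}h^n(x)$ and the displayed inequalities become the assertion of the lemma. The only genuinely hard point — obtaining a vertical bounded distortion estimate that is uniform along orbits despite the lack of uniform contraction of cylinder heights — has already been dealt with in Lemmas~\ref{bounded distortion fix t} and \ref{bounded distortion vertical}; what is left is the bookkeeping of this last paragraph, in particular checking that the tracked fibre value $\sup(U_{T^{j+1}x})$ is precisely the one used to define $\mathcal{D}h$ in each of the $\XbP$ and $R_P$ cases, which uses orientation-preservation and the invariance of $U$.
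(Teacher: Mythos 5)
Your proposal is correct and follows essentially the same route as the paper: combine Proposition~\ref{bound on height} with the horizontal and vertical bounded distortion estimates (Lemmas~\ref{bounded distortion fix t} and \ref{bounded distortion vertical}) to replace the sup/inf by the value at an arbitrary $(x,s)\in C_n\times U_{C_n}$, then take $s=\sup(U_x)$. Your final paragraph merely spells out (via orientation preservation and the invariance $\skewT(U_x)=U_{Tx}$) why this choice of $s$ turns the product into $\mathcal{D}h^n(x)$, a step the paper leaves implicit in ``choosing $s=\sup(U_x)$, the result follows.''
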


\subsection{Proof of Theorem \ref{box dim thm}}
We now calculate the box dimension of the graph of $u$ under the partial hyperbolicity hypothesis.

\begin{Def}\label{pressure}
Let $\UU_n$ be the set of all cylinders of rank $n$. Let $C_\ell\in \UU_\ell$. Let $\phi:X\to\RR$ be a function. Define
\[
Z(\phi,n) = \inf_{\substack{\UU\subset\bigcup_{\ell\geq n}\UU_\ell\\ \UU \text{ covers } X}}\sum_{C_\ell\in \UU}\exp\sup_{x\in C_\ell}\phi^\ell(x)\hspace{0.5cm}\text{and}\hspace{0.5cm}
CZ(\phi, n) = \sum_{C_n\in \UU_n}\exp\sup_{x\in C_n}\phi^n(x).
\]
We define the \emph{topological pressure} and \emph{upper capacity topological pressure} of $\phi$ over $X$ respectively as $\PP(\phi) = \lim_{n\to\infty}\frac{1}{n}\log Z(\phi, n)$ and $\overline{C\PP}(\phi) = \limsup_{n\to\infty}\frac{1}{n}\log CZ(\phi, n)$.
\end{Def}
Suppose $\phi$ is continuous.  As we are taking the pressure over a
compact, $T$-invariant set $X$, the upper and lower capacity
topological pressure and the topological pressure are equal \cite[Theorem
  11.5]{Pesin}.

\begin{Def}
Let $U$ be a non-empty bounded subset of $\RR^2$. Let $N(U,r)$ be the least number of balls of diameter $r$ needed to cover $U$. Define the lower and upper box counting dimension of $U$ as
\[
\underline{\dim}_BU = \liminf_{r\to 0}\frac{\log N(U,r)}{-\log r} \hspace{1.5cm} \overline{\dim}_BU = \limsup_{r\to 0}\frac{\log N(U,r)}{-\log r}.
\]
respectively.  When the limits coincide, the \emph{box dimension} $\dim_BU$ of $U$ is the common value.
\end{Def}
By \cite[3.1]{Falconer1} we can replace $N(U,r)$ by the maximum number of disjoint balls of diameter $r$ with centres in $U$, denoted $\tilde N(U,r)$. When covering the set $U$ by boxes, we will need to be able to accurately estimate the size of the cylinder sets. To do this we will use Moran covers.
\begin{Def}
Let $r>0$. For all $x\in X$ define the unique integer $n(x)$ such that
\begin{equation}\label{def of moran}
\left(|T'|^{n(x)}(x)\right)^{-1}>r \hspace{.5cm}\text{and}\hspace{.5cm}
r\|T'\|_\infty^{-1}<\left(|T'|^{n(x)+1}(x)\right)^{-1} \leq r.
\end{equation}
Let $C_x$ be a cylinder set containing $x$. If $x'\in C_x$ and $n(x')\leq n(x)$, then $C_x\subseteq C_{x'}$. Let $R_i$ be the largest cylinder containing $x$ such that $R_i = C_{x'}$ for some $x'\in R_i$ and $n(x')\leq n(x'')$ for all $x''\in R_i$. The collection of all such $R_i$ determines a pairwise disjoint partition of $X$ (except at the end-points of cylinders). Let $\mathcal{M}_r = \{R_i\}_{i=0}^{|\mathcal{M}_r|-1}$, denoting the number of cylinders in $\mathcal{M}_r$ by $|\mathcal{M}_r|$. We call $\mathcal{M}_r$ a \emph{Moran cover of $X$ at scale $r$ with respect to $|T'|^{-1}$}.
\end{Def}

We first determine a lower bound on the lower box dimension of $U$.
\begin{Lem}\label{lbd}
Let $\al>0$. Let $\skewT\in \mathcal{S}(X,\RR, C^{\Lip,2}_P)$ be partially hyperbolic with invariant quasi-graph $U$ not the graph of a continuous function. Let $t$ be the unique solution to the generalised Bowen equation \eqref{bowen eq}. Then $\underline{\dim}_BU \geq t.$
\begin{proof}
Let $\mathcal{M}_r = \{R_i\}$ be a Moran cover at scale $r$ with respect to $|T'|^{-1}$. Let $x\in X$ be given, then define $x_i\in R_i$ as the pre-image of $x\in X$ with least $n$ such that $T^nx_i = x$. As $R_i$ is a cylinder, such an $x_i$ exists. Denote the rank of each cylinder $R_i$ by $n(x_i)$. By Lemma \ref{bounded distortion} there exists $D_0\geq 1$ such that
\begin{align}\label{bound on width cyls}
D_0^{-1}r<D_0^{-1}\left(|T'|^{n(x_i)}(x_i)\right)^{-1}\leq |R_i|.
\end{align}

Therefore, for each element $R_i$ of $\mathcal{M}_r$, we can find at least one open interval of diameter $D_0^{-1}r$ centred in $R_i$. Let $p\in P$. There exists $y_i\in R_i$, a pre-image of $p$ such that $T^{n(x_i)}y_i=p$ and $y_i$ is the pre-image of $p$ in $R_i$ with smallest $n$ such that $T^ny_i=p$. By Lemmas \ref{lower bound on height}, \ref{bounded distortion fix t}, \ref{bounded distortion vertical} and \ref{bound on height as want}, we have
\begin{align*}
\Height(U_{R_i})&\geq |U_{y_i}| \geq \inf_{t\in U_{T^{n(x_i)}y_i}}\|\del h^{n(x_i)}_{y_i}(t)\|_\infty|U_{p}|\geq C_0^{-1}\mathcal{D} h^{n(x_i)}(y_i).
\end{align*}
for $C_0=D|U_p|>0$ where $D$ is as in Proposition \ref{bound on height as want}.

By Lemma \ref{bounded distortion} there exists $D_0>0$ such that each $y_i$ must be distance $D_0^{-1}|R_i|$ apart.  By \eqref{bound on width cyls}, each $y_i$ must be distance $D_0^{-2}r$ apart. Let $K_0^{-1}=D_0^{-2}$. We can find at least
\begin{equation}\label{max disj balls 1}
C_0^{-1}\frac{\mathcal{D} h^{n(x_i)}(y_i)}{K_0^{-1}r}-1
\end{equation}
disjoint balls of diameter $K_0^{-1}r$ with centre in $U_{y_i}$ and not intersecting any other interval $U_{y_j}$.

Therefore, by \eqref{max disj balls 1} and bounded distortion, letting $C_1=D_0C_0$ we see that
\begin{equation}\label{lower bound on balls}
C_1^{-1}\frac{\mathcal{D} h^{n(x_i)}_{x_i}(x_i)}{K_0^{-1}\left(|T'|^{n(x_i)}(x_i)\right)^{-1}}-1< C_1^{-1}\frac{\mathcal{D} h^{n(x_i)}_{x_i}(x_i)}{K_0^{-1}r}-1\leq  \tilde N(U_{R_i}, K_0^{-1}r).
\end{equation}
By partial hyperbolicity, for $n(x_i)$ sufficiently large $|\mathcal{D} h_x^{n(x_i)}(x_i)|T'|^{n(x_i)}(x_i)|>\ka^{n(x_i)}$ is large and so we can incorporate the $-1$ into the constant. Let $\underline d = \underline{\dim}_BU$. Fix $\ep>0$. By the definition of lower box dimension, for any  $\ep>0$ there exists sequence $r_\ell\to 0$ such that $\tilde N(U,r_\ell)<r_\ell^{-(\underline{d}+\ep)}.$

Let $\phi_\ep = (\log| T'| - (\underline{d}+\ep)\log| T'| + \log \mathcal{D} h)$. Let $n_\ell$ be the infimum of the rank of cylinders in $\mathcal{M}_r$, so $n_\ell = \inf\{n(x_i)\mid x_i\in R_i\}$ and notice that as $\ell\to \infty$, $n_\ell\to\infty$. As $\mathcal{M}_r$ is an example of a cover $\UU$ of $X$ for cylinders of rank at least $n_\ell$, then
\begin{align*}
Z(\phi, n_\ell) &\leq \sum_{i=0}^{|\mathcal{M}_{r_\ell}|-1}\exp\sup_{x\in R_i}\phi^{n(x_i)}(x)\leq \sum_{i=0}^{|\mathcal{M}_r|-1}\sup_{x\in R_i}\left(|T'|^{n(x_i)}(x)\right)^{-(\underline{d}+\ep)}\frac{\mathcal{D} h^{n(x_i)}(x)}{\left(|T'|^{n(x_i)}(x)\right)^{-1}}.
\end{align*}
By bounded distortion, Lemmas \ref{bounded distortion}, \ref{bounded distortion vertical} and \ref{bounded distortion fix t}, we can replace the supremum by a constant $C_2$ and sum over $x_i$. Thus,
\begin{align*}
Z(\phi, n_\ell)&\leq C_2\sum_{i=0}^{|\mathcal{M}_r|-1}\left(|T'|^{n(x_i)}(x_i)\right)^{-(\underline{d}+\ep)}\frac{\mathcal{D} h^{n(x_i)}(x_i)}{\left(|T'|^{n(x_i)}(x_i)\right)^{-1}}\leq C_3 r_\ell^{\underline{d}+\ep}r_\ell^{-(\underline{d}+\ep)}= C_3,
\end{align*}
where $C_3>0$ is independent of $r_\ell$. Hence, $\frac{1}{n_\ell}\log Z(\phi_\ep,n_\ell)\leq \frac{1}{n_\ell}\log C_3$. Letting $r_\ell\to 0$, hence $n_\ell\to\infty$, we have $\PP(\phi_\ep)\leq 0$. As pressure is monotone increasing, $t\leq \underline{d}+\ep$. As $\ep>0$ is arbitrary, $t\leq \underline{d}$.
\end{proof}
\end{Lem}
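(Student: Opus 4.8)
The plan is to bound from below the maximal number $\tilde N(U,r)$ of pairwise disjoint balls of diameter comparable to $r$ with centres on the quasi-graph $U$, and then to repackage this estimate as an inequality for topological pressure, following the scheme of \cite[Theorem~13.1]{Pesin} but now exploiting the vertical bounded-distortion estimates established above. First I would fix $r>0$ and take a Moran cover $\mathcal{M}_r=\{R_i\}$ of $X$ at scale $r$ with respect to $|T'|^{-1}$; by Proposition~\ref{bounded distortion} each $R_i$ is a cylinder of some rank $n(x_i)$ with $\diam(R_i)\sim (|T'|^{n(x_i)}(x_i))^{-1}\sim r$. Since $T$ is full branched, there is a unique $y_i\in R_i$ with $T^{n(x_i)}y_i=p$, where $p\in P$ is the fixed point. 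Because $U$ is not the graph of a continuous function, Theorem~\ref{dichotomy thm} gives $|U_p|>0$, so Lemma~\ref{lower bound on height} applies non-trivially; combining it with the vertical bounded-distortion estimates of Lemmas~\ref{bounded distortion fix t}, \ref{bounded distortion vertical} and \ref{bound on height as want} produces a constant independent of $i$ with $\Height(U_{R_i})\geq |U_{y_i}|\gtrsim \mathcal{D}h^{n(x_i)}(x_i)$.

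Next I would count disjoint balls. The fibre $\{y_i\}\times U_{y_i}$ is a genuine closed interval of length at least a constant multiple of $\mathcal{D}h^{n(x_i)}(x_i)$, hence contains of order $\mathcal{D}h^{n(x_i)}(x_i)/r$ disjoint balls of diameter $\sim r$ with centres on $U$; moreover, by Proposition~\ref{bounded distortion} the points $y_i$ lying in distinct Moran cylinders are horizontally separated by at least a fixed multiple of $r$, so the families over distinct $R_i$ do not overlap. Summing over $i$ and invoking \cite[3.1]{Falconer1}, I obtain a constant $c\in(0,1)$ with
\[
\tilde N(U,cr)\ \gtrsim\ \sum_i \frac{\mathcal{D}h^{n(x_i)}(x_i)}{r},
\]
all constants being independent of $r$.

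Finally I would argue by contradiction. Suppose $\underline d:=\underline{\dim}_B U<t$, fix $\ep>0$ with $\underline d+\ep<t$, and along a sequence $r_\ell\to0$ realising the lower box dimension use $\tilde N(U,r_\ell)\leq r_\ell^{-(\underline d+\ep)}$. With $\phi_\ep:=(1-(\underline d+\ep))\log|T'|+\log|\mathcal{D}h|$, the Moran property $(|T'|^{n(x_i)}(x_i))^{-1}\sim r_\ell$ gives $\exp\phi_\ep^{n(x_i)}(x_i)\sim r_\ell^{\underline d+\ep}\,\mathcal{D}h^{n(x_i)}(x_i)/r_\ell$; since $\mathcal{M}_{r_\ell}$ is an admissible cover of $X$ by cylinders of rank at least $n_\ell:=\min_i n(x_i)$, summing and using the previous display yields $Z(\phi_\ep,n_\ell)\lesssim r_\ell^{\underline d+\ep}\,\tilde N(U,cr_\ell)\lesssim 1$. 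Letting $r_\ell\to0$ (so $n_\ell\to\infty$) gives $\PP(\phi_\ep)\leq0$, and since $\log|T'|>0$ the pressure of $(1-s)\log|T'|+\log|\mathcal{D}h|$ is strictly decreasing in $s$, so $\PP(\phi_\ep)\leq 0=\PP((1-t)\log|T'|+\log|\mathcal{D}h|)$ forces $\underline d+\ep\geq t$; as $\ep>0$ was arbitrary, $\underline{\dim}_B U\geq t$.

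The step I expect to be the main obstacle is the cylinder-independent lower bound $\Height(U_{R_i})\gtrsim \mathcal{D}h^{n(x_i)}(x_i)$: because the fibre contraction degenerates near $P$, the naive bounded-distortion argument of \cite[Lemma~3]{Bedford} breaks down, and one must rely on the delicate vertical bounded-distortion estimates of Lemmas~\ref{bounded distortion fix t}--\ref{bounded distortion vertical}; it is also essential that $U$ is genuinely a quasi-graph, since otherwise the interval over the fixed point is degenerate and Lemma~\ref{lower bound on height} is vacuous.
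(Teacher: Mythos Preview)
Your proposal is correct and follows essentially the same route as the paper: Moran cover, pre-image $y_i$ of the fixed point in each $R_i$, the height lower bound via Lemma~\ref{lower bound on height} combined with the vertical bounded-distortion Lemmas~\ref{bounded distortion fix t}--\ref{bound on height as want}, disjoint-ball counting along the vertical intervals $U_{y_i}$ using horizontal separation of the $y_i$, and finally the conversion to a pressure inequality via the Moran cover as an admissible cover in $Z(\phi_\ep,n_\ell)$. The one cosmetic wrinkle is your closing paragraph, where you frame the argument as a contradiction (fixing $\ep$ with $\underline d+\ep<t$) but then conclude with ``as $\ep>0$ was arbitrary''; either drop the contradiction framing and argue directly (as the paper does), or end the contradiction at the point where $\underline d+\ep\geq t$ clashes with your choice of $\ep$.
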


We prove that the upper box dimension is bounded above by the unique solution $t$ to the generalised Bowen equation \eqref{bowen eq}.
\begin{Lem}\label{ubd}
Let $\skewT\in \mathcal{S}(X,\RR, C^{\Lip,2}_P)$ be partially hyperbolic with invariant quasi-graph not the graph of a continuous function. Let $t$ be the unique solution to the generalised Bowen equation \eqref{bowen eq}. Then $\overline{\dim}_BU \leq
t.$
\end{Lem}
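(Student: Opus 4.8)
The plan is to show that for each fixed $s>t$ there is a constant $C$ with $N(U,r)\le C r^{-s}$ for all small $r>0$; since $s>t$ is arbitrary this yields $\overline{\dim}_B U\le t$. The potential driving the estimate is $\phi_s:=(1-s)\log|T'|+\log|\mathcal{D} h|$. Because $\log|T'|\ge\log\theta^{-1}>0$ and $\PP(\psi-c)=\PP(\psi)-c$ for constants $c$, the map $s\mapsto\PP(\phi_s)$ is strictly decreasing, so the defining relation $\PP((1-t)\log|T'|+\log|\mathcal{D} h|)=0$ of $t$ forces $\PP(\phi_s)<0$, and hence $\overline{C\PP}(\phi_s)<0$ by the identification of the upper capacity pressure with the pressure recorded after Definition \ref{pressure}. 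Fix $\de>0$ and $n_1\in\NN$ such that $CZ(\phi_s,n)\le e^{-\de n}$ for all $n\ge n_1$.

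Next I would set up a Moran cover. For small $r>0$ let $\mathcal{M}_r=\{R_i\}$ be the Moran cover of $X$ at scale $r$ with respect to $|T'|^{-1}$, write $n_i$ for the rank of $R_i$, and pick $x_i\in R_i$ as in the proof of Lemma \ref{lbd}, so that $T^{n_i}x_i$ is a fixed base point in $\XbP$. From the definition of the Moran cover, $(|T'|^{n_i}(x_i))^{-1}\in(r,\|T'\|_\infty r]$, so by Proposition \ref{bounded distortion} each $|R_i|$ is comparable to $r$; and since every $|T'|\ge\theta^{-1}>1$ the ranks all lie in an interval $[c_1\log(1/r),c_2\log(1/r)]$, whence $\min_i n_i\to\infty$ as $r\to0$ and in particular $\min_i n_i\ge n_1$ once $r$ is small. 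The part of $U$ lying over $R_i$ is contained in a box of width comparable to $r$ and height $\Height(U_{R_i})$, and so is covered by at most $c_0\big(\Height(U_{R_i})/r+1\big)$ balls of diameter $r$ for some fixed $c_0$.

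Eliminating the additive $+1$ is where partial hyperbolicity and the hypothesis that $U$ is not a continuous graph enter. By the lower bound of Lemma \ref{bound on height as want}, which rests (via Lemma \ref{lower bound on height}) on the fact that $|U_p|>0$ for $p\in P$, we have $\Height(U_{R_i})\ge D^{-1}\mathcal{D} h^{n_i}(x_i)$; meanwhile the partial hyperbolicity inequality $\m(\del h)\m(|T'|)\ge\ka>1$ gives $\mathcal{D} h^{n_i}(x_i)\,|T'|^{n_i}(x_i)\ge\ka^{n_i}$. Combining these with $|T'|^{n_i}(x_i)\asymp r^{-1}$ shows $\Height(U_{R_i})/r\gtrsim\ka^{n_i}\ge\ka^{c_1\log(1/r)}\to\infty$, so for $r$ small $\Height(U_{R_i})/r\ge1$ for every $i$. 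Hence $N(U_{R_i},r)\lesssim\Height(U_{R_i})/r\le D\,\mathcal{D} h^{n_i}(x_i)/r\asymp\mathcal{D} h^{n_i}(x_i)\,|T'|^{n_i}(x_i)$, using the upper bound of Lemma \ref{bound on height as want} in the last step.

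Finally I would sum over $\mathcal{M}_r$ and apply the pressure estimate. Using $r^s\asymp(|T'|^{n_i}(x_i))^{-s}$ and the bounded distortion Lemmas \ref{bounded distortion}, \ref{bounded distortion fix t} and \ref{bounded distortion vertical} to pass from $\phi_s^{n_i}(x_i)$ to $\sup_{x\in R_i}\phi_s^{n_i}(x)$ at the cost of a fixed constant,
\[
N(U,r)\,r^s\le\sum_i N(U_{R_i},r)\,r^s\lesssim\sum_i\mathcal{D} h^{n_i}(x_i)\,(|T'|^{n_i}(x_i))^{1-s}\lesssim\sum_i\exp\Big(\sup_{x\in R_i}\phi_s^{n_i}(x)\Big).
\]
Grouping the $R_i$ by their common rank $n$ — for each $n$ these are distinct rank-$n$ cylinders, so a subset of $\UU_n$ — the last sum is at most $\sum_n CZ(\phi_s,n)$ over the finitely many ranks occurring in $\mathcal{M}_r$, all $\ge n_1$ for $r$ small, hence at most $\sum_{n\ge n_1}e^{-\de n}=(1-e^{-\de})^{-1}e^{-\de n_1}$, a bound independent of $r$. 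Therefore $N(U,r)\,r^s$ remains bounded as $r\to0$, giving $\overline{\dim}_B U\le s$, and letting $s\downarrow t$ completes the argument. I expect the main obstacle to be the bookkeeping forced by the non-uniform contraction: a single Moran cover mixes cylinders of very different ranks, and one must both dispose of the ``$+1$'' covering term — precisely where partial hyperbolicity together with the genuine discontinuity of $U$ (via the lower height bound) is indispensable — and convert negativity of the pressure into a bound on the resulting mixed-rank sum; the bounded distortion estimates of \S\ref{bdd distortion section} are what make both steps go through.
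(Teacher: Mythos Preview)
Your proposal is correct and uses the same core ingredients as the paper --- Moran covers, the height bounds of Lemma~\ref{bound on height as want}, the bounded distortion estimates of \S\ref{bdd distortion section}, and partial hyperbolicity to absorb the additive $+1$ --- but organises the endgame differently.

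The paper argues contrapositively: setting $\overline d=\overline\dim_B U$ and fixing $\ep>0$, it picks a sequence $r_\ell\to0$ along which $N(U,r_\ell)\ge r_\ell^{\ep-\overline d}$, observes that the ranks occurring in $\mathcal M_{r_\ell}$ take at most $O(\log(1/r_\ell))$ values, and uses pigeonhole to find a single rank $N$ carrying at least a $1/\log(1/r_\ell)$ share of the covering number. The $\log$ loss is absorbed into a second $\ep$, yielding $\overline{CZ}(\psi_{2\ep},N)\ge C_4$ and hence $\overline{C\PP}(\psi_{2\ep})\ge0$, so $t\ge\overline d-2\ep$. You instead fix $s>t$, use $\overline{C\PP}(\phi_s)<0$ to get $CZ(\phi_s,n)\le e^{-\de n}$ for large $n$, and bound $N(U,r)\,r^s$ directly by $\sum_n CZ(\phi_s,n)$ summed over the ranks present in $\mathcal M_r$; the geometric decay makes this finite uniformly in $r$. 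Your route avoids the pigeonhole step and the attendant $\ep$-doubling, at the cost of needing the summability of $CZ(\phi_s,n)$; the paper's route works more directly with the definition of capacity pressure. One minor remark: to dispose of the $+1$ you invoke the lower height bound (and hence the discontinuity hypothesis), but in fact partial hyperbolicity alone gives $\mathcal{D} h^{n_i}(x_i)\,|T'|^{n_i}(x_i)\ge\ka^{n_i}$ directly from $\m(\del h)\m(|T'|)\ge\ka$, which is how the paper handles it.
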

\begin{proof}
Again, let $\mathcal{M}_r = \{R_i\}$ be a Moran cover at scale $r$ with respect to $|T'|^{-1}$ and let $x_i\in R_i$ be the pre-image of rank $n(x_i)$ of a given point $x$. So,
\begin{equation*}
|R_i|\leq D_0\left(|T'|^{n(x_i)+1}(x_i)\right)^{-1}\leq D_0 r.
\end{equation*}
Therefore, there are at most $D_0+1 = C_0$ balls of diameter $r$ needed to cover each $R_i$. By Proposition \ref{bound on height as want} and applying bounded distortion, Lemmas \ref{bounded distortion fix t} and \ref{bounded distortion vertical}, the height of $U$ over $R_i$ is bounded by
\[
\mathop{\textnormal{Height}}(U_{R_i})\leq D\mathcal{D} h^{n(x_i)}(x_i).
\]
So, the number of balls of diameter $r$ needed to cover $U$ over $R_i$ is bounded by
\begin{equation}\label{number of balls upper 2}
N(U_{R_i},r)\leq C_1\frac{\mathcal{D} h^{n(x_i)}(x_i)}{\left(|T'|^{n(x_i)}(x_i)\right)^{-1}}+1,
\end{equation}
where $C_1=DC_0>0$. By partial hyperbolicity as above, we can incorporate the $1$ into our constant.

Let $\overline{d} = \overline{\dim}_BU$. Let $\ep>0$ be arbitrary. By rearranging the definition of upper box dimension, there exists a sequence $r_\ell\to 0$ such that $N(U,r_\ell)\geq r_\ell^{\ep-\overline{d}}$.

Taking logs of the definition of a Moran cover, we see that there exist constants $C_2, C_3>0$ independent of $r_\ell$ such that
\[
-C_2\log r_\ell -1 \leq n(x_i) \leq -C_3\log (\|T\|_\infty^{-1}r_\ell) +1.
\]
For sufficiently small $r_\ell>0$, $n(x_i)$ can only take $B\leq -C_3\log (\|T\|_\infty^{-1}r_\ell)$ different values.

Let $U|_{\{C_N\in \mathcal{M}_{r_\ell}\}}$ be the restriction of $U$ to cylinders in the Moran cover $\mathcal{M}_{r_\ell}$ of rank $N$. Denote the number of balls of diameter $r$ required to cover $U|_{\{C_N\in \mathcal{M}_{r_\ell}\}}$ by $N(U|_{\{C_N\in \mathcal{M}_{r_\ell}\}}, r_\ell)$. As $n(x_i)$ can only take $B$ different values, there exists $N$ such that
\begin{align*}
N(U|_{\{C_N\in \mathcal{M}_{r_\ell}\}}, r_\ell)&\geq \frac{N(U,r_\ell)}{B}\geq \frac{r_\ell^{\ep-\overline{d}}}{-C_3\log (\m(|T'|^{-1})r_\ell)}\geq r_\ell^{2\ep-\overline{d}}
\end{align*}
for sufficiently small $r_\ell$. Let $\psi_{2\ep} = (\log |T'| - (\overline{d}-2\ep)\log |T'| + \log\mathcal{D} h)$. Then, again by bounded distortion, Lemmas \ref{bounded distortion}, \ref{bounded distortion fix t} and \ref{bounded distortion vertical}, we have
\begin{align*}
\overline{CZ}(\psi_{2\ep}, N) &= \sum_{C_N \in \UU_N}\exp\sup_{x\in C_N}(\psi_{2\ep}^Nx)\geq C_4\sum_{C_N\in \mathcal{M}_{r_\ell}}\left(|T'|^N(x_i)\right)^{\overline{d}-2\ep}	\frac{\mathcal{D} h^{N}(x_i)}{\left(|T'|^N(x_i)\right)^{-1}}.
\end{align*}
Thus, $\overline{CZ}(\psi_{2\ep}, N)\geq C_4r_\ell^{\overline{d}-2\ep}r_\ell^{2\ep-\overline{d}}= C_4$
where $C_4$ is independent of $r_\ell$. As $r_\ell\to 0$, $N\to\infty$, $\overline{CP}(\psi_{2\ep})\geq  0$. As $X$ is compact, $\PP(\psi_{2\ep})=\overline{CP}(\psi_{2\ep})$. Pressure is monotone increasing and $\PP(\psi_t)=0$, so $t\geq \overline{d}+2\ep$. As $\ep$ is arbitrary, $t\geq \overline{d}$.
\end{proof}

\begin{proof}[Proof of Theorem \ref{box dim thm}]
By Lemmas \ref{lbd} and \ref{ubd}, $t\leq \underline{\dim}_BU\leq \overline\dim_BU\leq t$. As $t$ is unique \cite{MossWalkdenDim}, $t = \dim_{B}U$.
\end{proof}

\begin{Rem}
An open question is the Hausdorff dimension of the quasi-graph. There
has been much significant progress in the study of Hausdorff dimension
of (uniformly contracting) Weierstrass-type graphs, c.f. \cite{Otani},
\cite{Shen}, \cite{BBR}; by relating the natural extension of the skew
product to fat solenoidal maps using Ledrappier-Young Theory
\cite{LedrappierYoung}, \cite{Ledrappier}.  The obstruction in the setting of this paper is developing transversality results (cf.\ \cite{Tsujii}) to
quasi-graphs.
\end{Rem}

\end{document}